\newcommand{\be}{\begin{equation}}
\newcommand{\bea}{\begin{equation}\begin{aligned}}
\newcommand{\beas}{\begin{equation*}\begin{aligned}}
\newcommand{\eeas}{\end{aligned}\end{equation*}}
\newcommand{\eea}{\end{aligned}\end{equation}}
\newcommand{\ee}{\end{equation}}
\begin{document}
\begin{CJK*}{GBK}{song}
\begin{frontmatter}
\title{
Stability of the Couette flow 
 for  3D Navier-Stokes \\ equations with rotation 
 }

\author{Wenting Huang$^1$}

\author{Ying Sun$^2$}

\author{Xiaojing Xu$^{1}$}

\address{1. School of Mathematical Sciences, Beijing Normal University, Beijing, 100875, PR  China}

\address{2. School of Science, Beijing University of Posts and Telecommunications, Beijing, 100876, PR  China}

\tnotetext[2]{E-mails:  hwting702@163.com (W. Huang), sunying@bnu.edu.cn(Y. Sun), xjxu@bnu.edu.cn (X. Xu). }

%%%%%%%%%%%%%%%%%%%%%%%%%%%%%%%
\begin{abstract}
Rotation significantly influences the stability characteristics of both laminar and turbulent shear flows. This study examines the stability threshold of the three-dimensional Navier-Stokes equations with rotation, in the vicinity of the Couette flow at high Reynolds numbers ($\mathbf{Re}$) in the periodical domain $\mathbb{T} \times \mathbb{R} \times \mathbb{T}$, where the rotational strength is equivalent to the Couette flow. Compared to the classical Navier-Stokes equations, rotation term brings us more two primary difficulties: the linear coupling term involving in the equation of $u^2$ and the lift-up effect in two directions. To address these difficulties, we introduce two new good unknowns that effectively capture the phenomena of enhanced dissipation and inviscid damping  to suppress the lift-up effect. Moreover, we establish the stability threshold for initial perturbation $\left\|u_{\mathrm{in}}\right\|_{H^{\sigma}} < \delta  \mathbf{Re}^{-2}$ for any $\sigma > \frac{9}{2}$ and some $\delta=\delta(\sigma)>0$ depending only on $\sigma$.
\end{abstract}
\begin{keyword}
Navier-Stokes equations with rotation; Couette flow; Lift-up effect; Stability threshold.
\MSC[2020] 35Q35,  76U05,  76E07,  76F10.
\end{keyword}
\end{frontmatter}

%% Start line numbering here if you want
% \linenumbers

%% main textx
\newtheorem{thm}{Theorem}[section]
\newtheorem{lem}{Lemma}[section]
\newtheorem{pro}{Proposition}[section]
\newtheorem{concl}{Conclusion}[section]
\newtheorem{cor}{Corollary}[section]
\newproof{pf}{Proof}
\newdefinition{rem}{Remark}[section]
\newtheorem{definition}{Definition}[section]
 %\linenumbers

\tableofcontents

\section{Introduction}\label{introud}
\numberwithin{equation}{section}
\textit{1.1. Presentation of the problem.} 
Since the pioneering works of Rayleigh on inviscid flow (1880) and Reynolds experiment (1883), research on the stability and transition to turbulence in various flow systems has been ongoing. The impact of rotation on shear flows are relevant in a wide range of fields, including industry, geophysics, atmospheric and astrophysical phenomena. Perhaps the simplest laboratory realization is rotating plane Couette flow, which can be experimentally achieved by using a plane Couette system on a rotating table (e.g. \cite{TA1996,TTA2010}). Recently, Oxley and Kerswell \cite{OK2024} have made the first attempt to study the linear stability of the three-dimensional (3D) Couette-Poiseuille flow by incorporating the effects of  stratification, rotation and viscosity simultaneously in physical experiment. 

Coriolis force induced by rotation may drastically change the flow behavior both for laminar and turbulent shear flows. For plane Couette flow with rotation, the Coriolis force will either be stabilizing or destabilizing across the full channel width.  
In addition, there is a type of instability that may occur in a fluid subject to body forces. Examples of such forces include buoyancy due to density differences and Coriolis forces due to system rotation. 

Despite the fact that rotation has been studied for over a century (e.g. \cite{Drazin2002}), the impact of rotation on the stability of Couette flow is still largely left to the physical experiments mentioned above, with little mathematical results available. Mathematically, our recent work \cite{HSX2024} considered the effect of rotation on the stability threshold of 3D Navier-Stokes equations near the Couette flow $(y, 0, 0)$, and gave a positive response to this problem. 

For 3D cases, currently in mathematics, perhaps the results of the stability of the Navier-Stokes equations near Couette flow have developed maturely. Without the effects of physical boundaries, Bedrossian, Germain and Masmoudi \cite{MR3612004} studied the stability threshold for 3D Navier-Stokes equations, and showed that if the initial perturbation satisfies $\left\|  u_0 \right\|_{H^{\sigma}} \leqslant \delta \nu^{\frac{3}{2}}$ for any $\sigma>\frac{9}{2}$, then the solution is global in time, remains within $O(\nu^{\frac{1}{2}})$ of the Couette flow in $L^2$ for all time. Subsequently, in the infinite regularity Gevrey class,  they \cite{MR4126259, MR4458538} studied the nonlinear subcritical transition (i.e., the transition from laminar flow to turbulence) of the Navier-Stokes equations near the Couette flow. In particular,  Wei and Zhang \cite{MR4373161} showed that the transition threshold $\gamma=1$ of 3D Navier-Stokes equations in Sobolev space $H^2$ is optimal. They further provided a mathematically rigorous proof that the regularity of initial perturbations does not affect this transition threshold. At present, the transition threshold in their results is considered  optimal. In the presence of physical boundary, at high Reynolds number regime, the boundary layer could affect the stability of the flow. Chen et al. \cite{MR4121130} have developed the resolvent estimate method for the two-dimensional case. Later on, Chen, Wei and Zhang \cite{CWZ2024} extended this method from \cite{MR4121130} to the 3D case with boundary conditions. They were the first to obtain the stability threshold $\gamma=1$ of the 3D Navier-Stokes equations near Couette flow with non-slip boundary condition, which has significant practical value. 

Compared with the 3D case, the results of stability thresholds in 2D case are more abundant. For details, please refer to \cite{MR3974608, BHIW1-2024, BHIW2-2024, MR3448924, MR3867637, MR4121130, CWZ2023, MR4176913, MR4451473, MR4322283, WZ2023TJM}. In fact, the excellent vortex structure in the 2D case prevents the occurrence of the lift-up effect, which is the fundamental difference between the 2D and 3D problems regarding their treatment approaches. Of course, there are a lot of works on physical experiments and numerical simulations devoted to estimating $\gamma$, also see \cite{Lundbladh1994, MR1886008, MR2736445, MR1631950, MR3185102}.

We are mainly interested in  the Couette flow on the stability of the 3D Navier-Stokes equations with rotation. Consider the following 3D incompressible Navier-Stokes equations with rotation, which is called Navier-Stokes-Coriolis equations in $\mathbb{T} \times \mathbb{R} \times \mathbb{T}$ (the torus $\mathbb{T}$ is the periodized interval $[0, 1]$):

\begin{equation}\label{1.1}
(NSC) \quad \quad \begin{cases}
\partial_{t} V-\nu \Delta V+V \cdot \nabla V+\beta e_{3} \times V+\nabla P=0, \\
\nabla \cdot V=0, \\
V(t=0, x, y, z)=V_{\mathrm{in}}(x, y, z),
\end{cases}
\end{equation}
where $V=\left(V^1, V^2, V^3\right)^{\text{T}}  \in \mathbb{R}^3$ is the velocity field and $P \in \mathbb{R}$ is the pressure of fluid. The term $e_3=(0, 0, 1)^{\text{T}}$ is the unit vector in the vertical direction. The term $\beta e_{3} \times V$ denotes the Coriolis force with the rotation speed $\beta \in \mathbb{R}\backslash\{0\}$. The constant $\nu>0$ is the viscosity coefficient, which  is the inverses of the Reynolds number ($\mathbf{Re}$), i.e., $\nu=\mathbf{Re}^{-1}$.  

In the previous work \cite{HSX2024}, we proved that the nonlinear stability of \eqref{1.1} near the Couette flow when the rotation speed $\beta>1$ or $\beta<0$.  
Note that when $\beta=0$, \eqref{1.1} corresponds to the classical Navier-Stokes equations. There are rich results on the stability of the Navier-Stokes equations.  In this paper,  
we  focus on the case when  $\beta=1$ and study the dynamic stability behavior of solutions of \eqref{1.1}  around the  Couette flow 
\begin{equation}\label{1.2}
V^s=(  y, 0, 0)^{\text{T}},  \quad \partial_{y} P^s=-  y.
\end{equation}

We introduce the perturbation $u=V-V^s$, which satisfies
\begin{equation}\label{1....3}
\begin{cases}
\partial_{t} u+ y \partial_{x} u+\begin{pmatrix}
0 \\
u^1 \\
0
\end{pmatrix}-\nu \Delta u+\nabla p^{L} =-u \cdot \nabla u-\nabla p^{NL}, \\
\nabla \cdot u=0,\\
u(t=0)=u_{\mathrm{in}},
\end{cases}
\end{equation}
where the pressure $p^{NL}$ and $p^{L}$ are determined by 
\begin{align}
	 \Delta p^{NL}&=-\partial_{i}u_j \partial_{j}u_i, \label{1.4} \\
	 \Delta p^{L}&=- \partial_{x} u^2- \partial_{y} u^1. \label{1.5}
\end{align}
The term $p^{NL}$  is  nonlinear contribution to the pressure due to the convective term and   $p^{L}$  is linear contribution to the pressure due to the interaction between the disturbance near the Couette flow and  the rotation of fluid. 

The goal of  this paper is to find the stability threshold for the perturbation. More precisely, we study the following problem:
\begin{itemize}
	\item Given $\sigma$, what is the smallest $\gamma>0$ such that if the initial perturbation is such that $\|u_{\mathrm{in}}\|_{H^{\sigma}}=   \varepsilon< \nu^{\gamma}$, then $u$ remains close to the Couette flow in a suitable sense and converges back to the Couette flow.
\end{itemize}
In other words, given a norm $\|\cdot\|_{H^{\sigma}}$, find a $\gamma=\gamma(\sigma)$ so that
\begin{align*}
\left\|  u_{\mathrm{in}}  \right\|_{H^{\sigma}}& \leqslant \;  \nu^{\gamma} \,\, \Rightarrow  \text{ stability}, \nonumber \\
\left\|  u_{\mathrm{in}}  \right\|_{H^{\sigma}}&  \gg  \;  \nu^{\gamma} \,\, \Rightarrow  \text{ instability},
\end{align*}
where the exponent $\gamma=\gamma (\sigma)>0$  is called the \textit{stability threshold}. And it is also known as the \textit{transition threshold} in almost all applied literature. Obviously, the \textit{transition threshold} problem is more stringent and complicated than the nonlinear stability problem.

\textit{1.2. Linear effects of (NSC).} In this paper, we consider the  nonlinear stability threshold for 3D  (NSC) near Couette flow. To establish nonlinear stability, the first step is to analyze the linear effects of the 3D Couette flow. Ignoring the nonlinear terms in \eqref{1....3}, the linearized perturbation equation satisfies 
\begin{align}\label{1....4}
&\partial_{t} u+  y \partial_{x} u+\begin{pmatrix}
0 \\
  u^1 \\
0
\end{pmatrix}-\nu \Delta u+  \nabla (-\Delta)^{-1} \partial_{x} u^2+  \nabla (-\Delta)^{-1} \partial_{y} u^1=0.
\end{align}
Alternatively, it can be written as follows
\begin{align}\label{1..24}
\partial_{t} u+  y \partial_{x} u-\nu \Delta u+  \mathcal{L} u=0,
\end{align}
where the linear operator $\mathcal{L}$ is defined as
\begin{align*}
\mathcal{L}\triangleq \begin{pmatrix}
- \partial_{x} \Delta^{-1} \partial_{y} & -\partial_{x} \Delta^{-1}\partial_{x} & 0\\
1-\partial_{y} \Delta^{-1} \partial_{y} & -\partial_{y} \Delta^{-1}\partial_{x} & 0 \\
- \partial_{z} \Delta^{-1} \partial_{y} & -\partial_{z} \Delta^{-1}\partial_{x} & 0
\end{pmatrix}.
\end{align*} 

In general, there are four kinds of linear effects: lift-up, enhanced dissipation, inviscid damping  and boundary layer,  which play a key role in determining the stability threshold of fluid dynamic.

To better describe the linear effects, we first define the projections on the zero frequency and the non-zero frequencies in $x$ of a function $f$  as follows:
\begin{equation}\label{HSX333}
P_{0}f=f_{0}=\int_{\mathbb{T}} f(x,y,z)dx, \quad	P_{\neq}f=f_{\neq}=f-f_{0}. 
\end{equation}
Furthermore, the projection onto the zero frequency in $x$ and $z$ of a function $f(x,y,z)$ is denoted by
\begin{equation*}
\overline{f}_0=f_{0, 0}=\int_{\mathbb{T}} \int_{\mathbb{T}} f(x,y,z)dx dz.
\end{equation*} 

In this paper, we investigate the spatial domain $\mathbb{T} \times \mathbb{R} \times \mathbb{T}$, enabling us to disregard the boundary layer effect. Subsequently, our analysis will be focused on the remaining three linear effects.

\textit{1.2.1. Lift-up effect.} It is noted that the lift-up effect of zero frequency velocity field in 3D is a crucial factor affecting the fluid stability. To this end, we consider the projection onto zero frequency in $x$ of \eqref{1..24}. Notice that $u_0$ satisfies
\begin{align}\label{1.25}
\partial_{t} u_0-\nu \Delta_{y, z} u_0+ \mathcal{L}_{k=0} u_0=0,
\end{align}	
where 
\begin{align}
\mathcal{L}_{k=0}\triangleq \begin{pmatrix}
0 & 0 & 0\\
1-\partial_{y} \Delta_{y, z}^{-1} \partial_{y} & 0 & 0 \\
- \partial_{z} \Delta_{y, z}^{-1} \partial_{y} & 0 & 0
\end{pmatrix}.
\end{align}
Notice that $u_0^1$ is decoupled from both $u_0^2$ and $u_0^3$. Hence, $u_0^1$ follows the standard heat equation. Applying the Fourier transform to the spatial variable and the Duhamel principle, we immediately obtain the explicit solution of \eqref{1.25}  in Fourier space
\begin{align*}
\begin{cases}
\widehat{\widetilde{u}_0^1}(t, \eta, l)=e^{-\nu |\eta, l|^2 t } \widehat{\widetilde{\left({u}^{1}_\mathrm{in} \right)_0}},  \\
\widehat{\widetilde{u}_0^2}(t, \eta, l)=e^{-\nu |\eta, l|^2 t } \left(\widehat{\widetilde{\left({u}^{2}_\mathrm{in} \right)_0}}-  \dfrac{l^2}{|\eta, l|^2} t \,\widehat{\widetilde{\left({u}^{1}_\mathrm{in} \right)_0}}\right),  \\
\widehat{\widetilde{u}_0^3}(t, \eta, l)=e^{-\nu |\eta, l|^2 t } \left(\widehat{\widetilde{\left({u}^{3}_\mathrm{in} \right)_0}}+  \dfrac{\eta l}{|\eta, l|^2} t \, \widehat{\widetilde{\left({u}^{1}_\mathrm{in} \right)_0}}\right).
\end{cases}
\end{align*}
This means that the \textit{lift-up} effect (which was first observed in \cite{Ellingsen1975}) occurs in both the $\widetilde{u}_{0}^2$ and $\widetilde{u}_{0}^3$ directions. 

\textit{1.2.2. Enhanced dissipation.}
Let us start by the diffusion-convection equation in $\mathbb{T} \times \mathbb{R} \times \mathbb{T}$:
\begin{equation}\label{1.18}
\partial_{t} f+  y \partial_{x} f  - \nu \Delta f = 0.
\end{equation}
In the following, we define a linear change of variable 
\begin{equation}\label{2.5}
X=x-  t y, \quad \quad Y=y, \quad \quad Z=z.
\end{equation}
and use the convention of capital letters when considering any function $f$ in the moving frame \eqref{2.5}, hence defining $F$ by $F(t, X, Y, Z)=f(t, x, y, z)$.
Then the solution takes the form
\begin{equation*}
\widehat{F}_{\neq} (t, k, \eta, l) = e^{- \nu \int_0^t k^2+\left( \eta-   k \tau \right)^2+l^2 d \tau} \widehat{f}_{\neq} (0, k, \eta, l).
\end{equation*}
More precisely, there holds
\begin{equation*}
\left\| f_{\neq} (t) \right\|_{L^2} \leqslant e^{-c \nu t^3} \left\| \left(f_\mathrm{in}\right)_{\neq} \right\|_{L^2} \lesssim e^{-c \nu^3 t} \left\| \left(f_\mathrm{in}\right)_{\neq} \right\|_{L^2}.
\end{equation*}
This is so-called the \textit{enhanced dissipation}, which is due to the  effect of friction caused by the change of $\Delta_{L}$ in the new coordinate system \eqref{2.5}. For the enhanced dissipation of the non-zero frequency velocity, we will provide detailed proof in subsection \ref{section3.1}.

\textit{1.2.3. Inviscid damping.} The velocity field can be recovered by the formula $U_{\neq}=\Delta_{L}^{-1} Q_{\neq}$. Furthermore, owing to the intricate coupling structure of $\widehat{Q}^1$ and $\widehat{Q}^2$ as described in equation (3.2), it is not feasible to directly derive a consistent bounded estimate for these variables. Hence, it is necessary to construct two  favorable quantities $K^1 \triangleq |k, l| \left(k^2+ \left(\eta-   kt\right)^2+l^2\right)^{-\frac{1}{2}} \widehat{Q}^1$ and $K^2 \triangleq |k|\left(k^2+ \left(\eta-  kt\right)^2+l^2\right)^{-\frac{1}{2}} \widehat{Q}^2$ to establish a connection for obtaining a consistent bounded estimate of $K^1$ and $K^2$. In addition, we have the following relation
\begin{equation*}
\big| \widehat{U_{\neq}^2} \big|=\frac{1}{k^2+\left(\eta-  kt\right)^2+l^2} \big| \widehat{Q_{\neq}^2} \big|=\frac{1}{|k| \left(k^2+\left(\eta-  kt\right)^2+l^2 \right)^{\frac{1}{2}}} \big| \widehat{K_{\neq}^2} \big|.
\end{equation*}
Due to the bound $\frac{1}{k^2+\left(\eta-  kt\right)^2+l^2} \lesssim {\left\langle {  t} \right\rangle}^{-2} |k, \eta, l|^2$, this leads to a decay estimate of the type
\begin{equation*}
\left\| U_{\neq}^2 \right\|_{H^s} \lesssim \frac{1}{  t} \left\| K_{\neq}^2 \right\|_{H^s}.
\end{equation*}
Similarly, $U_{\neq}^1$ also has the following estimate:
\begin{equation*}
\left\| U_{\neq}^1 \right\|_{H^s} \lesssim \frac{1}{  t} \left\| K_{\neq}^1 \right\|_{H^s}.
\end{equation*}
This decay mechanism is known as \textit{inviscid damping}.

\textit{1.3. Re-formulation of the governing equations.}
To better capture the structural characteristics of  the  perturbation equations \eqref{1....3}, a natural treatment is to use the unknowns $q^{i}=\Delta u^{i}$, $(i=1, 2, 3)$ (as observed in \cite{MR4126259}). Then these unknowns satisfy the system
\begin{align}\label{2.4}
	\left\{\begin{array}{l}
		\partial_{t} q^{1}+  y \partial_{x} q^{1}-\nu \Delta q^{1}+   \partial_{x y} u^{1}-   \partial_{x x} u^{2} \\
		\quad  =-u \cdot \nabla q^{1}-q^{j} \partial_{j} u^{1}-2 \partial_{i} u^{j} \partial_{i j} u^{1}+\partial_{x}\left(\partial_{i} u^{j} \partial_{j} u^{i}\right), \\
		\partial_{t} q^{2}+  y \partial_{x} q^{2}-\nu \Delta q^{2}+  q^1+  \partial_{x y} u^{2}-  \partial_{y y} u^{1}\\
		\quad   =-u \cdot \nabla q^{2}-q^{j} \partial_{j} u^{2}-2 \partial_{i} u^{j} \partial_{i j} u^{2}+\partial_{y}\left(\partial_{i} u^{j} \partial_{j} u^{i}\right), \\
		\partial_{t} q^{3}+  y \partial_{x} q^{3}-\nu \Delta q^{3}+2   \partial_{x y} u^{3}-   \partial_{x z} u^{2}-  \partial_{z y} u^{1} \\
		\quad  =-u \cdot \nabla q^{3}-q^{j} \partial_{j} u^{3}-2 \partial_{i} u^{j} \partial_{i j} u^{3}+\partial_{z}\left(\partial_{i} u^{j} \partial_{j} u^{i}\right), \\
		q(t=0)=q_{\text {in }}.
	\end{array}\right.
\end{align}

%\subsubsection{Change of independent variables}
In the following, we will use a coordinate transform \eqref{2.5}  described in detail in \cite{MR4126259, MR3612004, Zelati2023},  which is a central tool for our analysis. In the new coordinates, the corresponding differential operators are changed as follows 
\begin{equation}\label{2.6}
	\nabla f(t, x, y, z) =\left(\begin{array}{c}
		\partial_{x} f \\
		\partial_{y} f \\
		\partial_{z} f
	\end{array}\right)=\left(\begin{array}{c}
		\partial_{X} F \\
		\left(\partial_{Y}-  t \partial_{X}\right) F \\
		\partial_{Z} F
	\end{array}\right) \\
	=\left(\begin{array}{c}
		\partial_{X} F \\
		\partial_{Y}^{L} F \\
		\partial_{Z} F
	\end{array}\right)=\nabla_{L} F(t, X, Y, Z) .
\end{equation}
Using the notation
\begin{equation*}
	\Delta_{L}=\nabla_{L} \cdot \nabla_{L}= \partial_{X}^2+(\partial_{Y}^{L})^2+\partial_{Z}^2.
\end{equation*}
The Laplacian transforms as
\begin{equation}\label{2.7}
	\Delta f=\Delta_{L}F=\left(\partial_{X}^2+(\partial_{Y}^{L})^2+\partial_{Z}^2 \right) F.
\end{equation}
Combining the definition of the new coordinates as in (\ref{2.5}) and denoting $U^i(t, X, Y, Z)=u^i(t, x, y, z)$ and $  Q^i(t, X, Y, Z)=q^i(t, x, y, z)$.  \eqref{2.4} then can be  transformed into the following system:
\begin{align}\label{2.10}
	\left\{\begin{array}{l}
		\partial_{t} Q^{1}-\nu \Delta_{L} Q^{1}+  \partial_{X Y}^{L} U^{1}-   \partial_{X X} U^{2} \\
		\quad  =-U \cdot \nabla_{L} Q^{1}-Q^{j} \partial_{j}^{L} U^{1}-2 \partial_{i}^{L} U^{j} \partial_{i j}^{L} U^{1}+\partial_{X}\left(\partial_{i}^{L} U^{j} \partial_{j}^{L} U^{i}\right), \\
		\partial_{t} Q^{2}-\nu \Delta_{L} Q^{2}+  Q^1+  \partial_{X Y}^{L} U^{2}-  \partial_{Y Y}^{L} U^{1}\\
		\quad  =-U \cdot \nabla_{L} Q^{2}-Q^{j} \partial_{j}^{L} U^{2}-2 \partial_{i}^{L} U^{j} \partial_{i j}^{L} U^{2}+\partial_{Y}^{L}\left(\partial_{i}^{L} U^{j} \partial_{j}^{L} U^{i}\right), \\
		\partial_{t} Q^{3}-\nu \Delta_{L} Q^{3}+2   \partial_{X Y}^{L} U^{3}-  \partial_{X Z} U^{2}-  \partial_{Z Y}^{L} U^{1} \\
		\quad  =-U \cdot \nabla_{L} Q^{3}-Q^{j} \partial_{j}^{L} U^{3}-2 \partial_{i}^{L} U^{j} \partial_{i j}^{L} U^{3}+\partial_{Z}\left(\partial_{i}^{L} U^{j} \partial_{j}^{L} U^{i}\right). 
	\end{array}\right.
\end{align}
Although most work is done directly on the system (\ref{2.10}), for certain steps it will be useful to use the momentum form of the equation
\begin{align}\label{2.11}
	&\partial_{t} U-\nu \Delta_{L} U+\begin{pmatrix}
		0 \\
		  U^1 \\
		0
	\end{pmatrix}+  \nabla_{L} (-\Delta_{L})^{-1} \partial_{X} U^2+  \nabla_{L} (-\Delta_{L})^{-1} \partial_{Y}^{L} U^1 \nonumber\\
	&\quad=-U \cdot \nabla_{L} U-\nabla_{L} (-\Delta_{L})^{-1}(\partial_{i}^{L} U^j \partial_{j}^{L} U^i).
\end{align}

\textit{1.4. Summary of main results.} Our first result in this paper describes the linear effects on the linearized perturbation equations.
 
\begin{thm}[\textbf{The linear stability}]\label{1.1.}
Assume the initial data $u_{\mathrm{in}}$ be a divergence-free, smooth vector field.  Then the solution $u$ to the linearized disturbance equation \eqref{1....4}  satisfies

$(1)$ 	For any $s\geqslant 0$, the linear enhanced dissipation  and inviscid damping of the non-zero   frequency $U_{\neq}$:
\begin{align}
& \left\| (U^1, U^2)_{\neq} (t) \right\|_{H^{s}} \lesssim {\left\langle {  t} \right\rangle}^{-1} e^{-\frac{\nu}{6}  k^2 t^3} \left\| u_{\mathrm{in}} \right\|_{H^{s+2}},  \label{1.11}\\
& \left\| U^3_{\neq} (t) \right\|_{H^{s}} \lesssim e^{-\frac{\nu}{6}  k^2 t^3} \left\|  u_{\mathrm{in}}\right\|_{H^{s+2}} \label{1.12}
\end{align}
for $ k \in \mathbb{Z}\backslash\{0\}$.

$(2)$ The lift-up effect of the zero  frequency $u_{0}$:
\begin{align}\label{1.13}
\left\{\begin{array}{l}
u_{0}^1(t, y, z)=e^{\nu \Delta_{y, z}t}\left( u_{\mathrm{in}}^1 \right)_0,\\
u_{0}^2(t, y, z)=e^{\nu \Delta_{y, z}t}{\left( \left( u_{\mathrm{in}}^2 \right)_0-t \,   \partial_{z}\Delta_{y, z}^{-1}\partial_{z} \left( u_{\mathrm{in}}^1 \right)_0  \right)},\\
u_{0}^3(t, y, z)=e^{\nu \Delta_{y, z}t}{\left( \left( u_{\mathrm{in}}^3 \right)_0+t \,  \partial_{z}\Delta_{y, z}^{-1}\partial_{y} \left( u_{\mathrm{in}}^1 \right)_0\right)}.
\end{array}\right.
\end{align}
\end{thm}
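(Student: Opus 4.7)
The plan is to treat the two parts separately. Part (2) is a direct computation after projecting the linearized system \eqref{1....4} onto the zero $x$-frequency, while part (1) requires moving to the drift coordinates \eqref{2.5} and performing a Fourier-space energy estimate on a carefully chosen pair of modified unknowns that decouple the Coriolis interaction between $U^1$ and $U^2$.

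For part (2), applying $P_0$ to \eqref{1....4} kills the transport term $y\partial_x u$ and reduces \eqref{1....4} to \eqref{1.25}. The first row of $\mathcal{L}_{k=0}$ is zero, so $u_0^1$ obeys the pure heat equation $\partial_t u_0^1=\nu\Delta_{y,z}u_0^1$, giving the first line of \eqref{1.13}. Next I would use the identity $1=\partial_y\Delta_{y,z}^{-1}\partial_y+\partial_z\Delta_{y,z}^{-1}\partial_z$ to rewrite the $(2,1)$-entry of $\mathcal{L}_{k=0}$ as $\partial_z\Delta_{y,z}^{-1}\partial_z$, so that the equations for $u_0^2$ and $u_0^3$ become inhomogeneous heat equations
\begin{equation*}
\partial_t u_0^2-\nu\Delta_{y,z}u_0^2=-\partial_z\Delta_{y,z}^{-1}\partial_z u_0^1,\qquad \partial_t u_0^3-\nu\Delta_{y,z}u_0^3=\partial_z\Delta_{y,z}^{-1}\partial_y u_0^1,
\end{equation*}
with forcing \emph{linear} in the already solved $u_0^1$. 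Since $e^{\nu\Delta_{y,z}s}$ commutes with the Fourier multipliers $\partial_z\Delta_{y,z}^{-1}\partial_z$ and $\partial_z\Delta_{y,z}^{-1}\partial_y$, the Duhamel integral telescopes to a factor of $t$, producing the linear-in-time lift-up in \eqref{1.13}. (The identification with the explicit Fourier formulas given in the introduction provides a cross-check.)

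For part (1), I would move to the shear coordinates \eqref{2.5}, write the linearized system for $U_{\neq}$ in Fourier, and work with the vorticity-type unknowns $Q^i=\Delta U^i$ introduced in \eqref{2.10}. In Fourier the Laplacian multiplier becomes $|\widetilde{k}|^2:=k^2+(\eta-kt)^2+l^2$, and the linear Coriolis interaction couples $\widehat{Q}^1$ and $\widehat{Q}^2$ through time-dependent multipliers. To decouple the energy estimate I would introduce the good unknowns
\begin{equation*}
K^1=|k,l|\,|\widetilde{k}|^{-1}\widehat{Q}^1,\qquad K^2=|k|\,|\widetilde{k}|^{-1}\widehat{Q}^2,
\end{equation*}
defined precisely so that the cross-terms arising in $\frac{d}{dt}(|K^1|^2+|K^2|^2)$ are antisymmetric and drop out, leaving an inequality of the schematic form
\begin{equation*}
\frac{d}{dt}\bigl(|K^1|^2+|K^2|^2\bigr)+2\nu|\widetilde{k}|^2\bigl(|K^1|^2+|K^2|^2\bigr)\le C\,\frac{|k(\eta-kt)|}{|\widetilde{k}|^2}\bigl(|K^1|^2+|K^2|^2\bigr),
\end{equation*}
the error term coming from $\partial_t(|k,l|/|\widetilde{k}|)$ and $\partial_t(|k|/|\widetilde{k}|)$. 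The remaining error is time-integrable on $\mathbb{R}$, so a Grönwall argument together with the standard lower bound $\int_0^t|\widetilde{k}(s)|^2\,ds\gtrsim k^2 t^3/3$ yields the enhanced dissipation factor $e^{-\nu k^2 t^3/6}$. Reverting from $K^i$ to $U^i$ via $|\widehat{U^i_{\neq}}|=|\widetilde{k}|^{-2}|\widehat{Q^i}|$ and the definition of $K^i$ supplies the inviscid damping factor $\langle t\rangle^{-1}$ (absorbing two derivatives, which is why the right-hand sides of \eqref{1.11}--\eqref{1.12} use $H^{s+2}$). Finally, $U_{\neq}^3$ is recovered either from the divergence-free condition $\partial_X U^1+\partial_Y^L U^2+\partial_Z U^3=0$ or directly from $U^3=\Delta_L^{-1}Q^3$; in the latter case $Q^3$ satisfies an equation with the same enhanced-dissipation structure (but without the Coriolis coupling of the first two components), so no modifier is needed and the slightly weaker bound \eqref{1.12} follows. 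The main obstacle is the second step: verifying that the cross-terms in the $(K^1,K^2)$ system are genuinely antisymmetric (so that they cancel in $L^2$) and that the residual commutator between $\partial_t$ and the multiplier modifier can be absorbed by the enhanced dissipation.
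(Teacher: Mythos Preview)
Your overall strategy matches the paper's, but two points deserve correction.

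First, for the $(K^1,K^2)$ system: the cancellation is \emph{exact}, not merely up to an integrable error. Differentiating $K_1=|k,l|\,w^{-1/2}\widehat{Q}^1$ produces a term $-\tfrac12|k,l|\,w^{-3/2}\dot w\,\widehat{Q}^1=k(\eta-kt)|k,l|\,w^{-3/2}\widehat{Q}^1$, and this precisely cancels the diagonal stretching term $k(\eta-kt)w^{-1}\widehat{Q}^1$ coming from the original system \eqref{3.2}. The same cancellation happens for $K_2$. What remains is the perfectly antisymmetric system \eqref{3.5}, so that
\[
\tfrac12\tfrac{d}{dt}\bigl(|K_1|^2+|K_2|^2\bigr)+\nu w\bigl(|K_1|^2+|K_2|^2\bigr)=0
\]
is an \emph{equality}. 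No Gr\"onwall step is needed; you integrate directly and use $\int_0^t w\,ds\ge \tfrac{1}{12}k^2t^3$. This exact symmetrization is the whole point of the choice of $K^1,K^2$, and it is worth stating cleanly rather than hedging with a schematic inequality.

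Second, your proposed treatment of $U_{\neq}^3$ does not quite work as written. The divergence-free route only controls $\partial_Z U^3$, which gives nothing when $l=0$. The $Q^3$ route is also problematic: the linearized $Q^3$ equation carries the stretching term $2\partial_{XY}^L\Delta_L^{-1}Q^3$, whose Fourier symbol $-2k(\eta-kt)/w$ is not harmless (indeed, handling it in the nonlinear analysis is exactly why the multiplier $m$ is introduced). The paper instead works directly with $\widehat{U}^3$ via \eqref{3.3}:
\[
\partial_t\widehat{U}^3+\nu w\,\widehat{U}^3 = kl\,w^{-1}\widehat{U}^2+l(\eta-kt)w^{-1}\widehat{U}^1,
\]
bounds the right-hand side pointwise by $w^{-1}(|K_1|+|K_2|)$, uses the already-obtained decay of $|K_1|^2+|K_2|^2$, and exploits the uniform bound $\int_{\mathbb R} w^{-1}(s)\,ds\lesssim |k|^{-1}$ to close a Duhamel estimate. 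This avoids the stretching term entirely and is the cleanest route to \eqref{1.12}.
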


\begin{rem}
One of the distinctive characteristics of the interaction between shearing and rotation is a coupling of velocity field which causes $U_{\neq}^{1,2}$ to exhibit the same inviscid damping effect. However, this characteristic also slows down the algebraic decay rate ${\left\langle{t} \right\rangle}^{-1}$ of inviscid damping compared with the rate ${\left\langle {t} \right\rangle}^{-2}$ of Navier-Stokes equations in \cite{MR3612004}. This phenomenon arises from the construction of variables $K_1$ and $K_2$ as detailed in \eqref{3.4}, which play a crucial role in the linearization perturbation.
\end{rem}
\begin{rem}
In \cite{MR3612004}, Bedrossian, Germain and Masmoudi give the estimates on the zero frequencies in $x$  of the solution to  the Navier-Stokes equations. Compared to the Navier-Stokes equations, the lift-up  effect of the   Navier-Stokes equations in rotational framework   is reflected in    $u_0^2$ and $u_0^3$ for $t \lesssim \nu^{-1}$. For smooth initial data of size $\varepsilon$, we  expect at  best the bounds for $u_0^{2, 3}$ 
\begin{align}\label{1.*15}
\left\| u_0^{2, 3} \right\|_{L^{\infty} H^{s}}^2+ \nu \left\| \nabla u_0^{2, 3} \right\|_{L^2 H^{s}}^2 \lesssim  \left( \varepsilon  \nu^{-1} \right)^2.
\end{align}

\end{rem}

Our second result is about the nonlinear stability of (NSC) near Couette flows.

\begin{thm}[\textbf{The nonlinear stability}]\label{1.2.}
Let $\nu \in (0,1)$,   $\sigma > \frac{9}{2}$. There exists a constant $\delta>0$ depending only on $\sigma$ such that if $u_{\mathrm{in}} \in H^{\sigma}$  is divergence-free with
\begin{align}\label{1..11}
\varepsilon=\left\|u_{\mathrm{in}}\right\|_{H^{\sigma}}< \delta   \nu^2,
\end{align}
then the solution   to \eqref{1....3}  with initial data  $u_{\mathrm{in}}$  is global in time and satisfies the following estimates:
\begin{align}
&\left\| u_0^1 \right\|_{L^{\infty} H^{\sigma}}+ \nu^{\frac{1}{2}} \left\| \nabla u_0^1 \right\|_{L^2 H^{\sigma}}\lesssim  \varepsilon,   \label{1...9} \\
&\left\| u_0^{2, 3} \right\|_{L^{\infty} H^{\sigma}}+ \nu^{\frac{1}{2}} \left\| \nabla u_0^{2, 3} \right\|_{L^2 H^{\sigma}}\lesssim  \varepsilon   \nu^{-1},   \label{1...10}\\
&\left\| U_{\neq}^{1, 2} \right\|_{L^{\infty} H^{\sigma-2}}+ \nu^{\frac{1}{6}}\left\| \nabla_{L} U_{\neq}^{1, 2} \right\|_{L^2 H^{\sigma-2}}+  \nu^{\frac{1}{6}}\left\| t U_{\neq}^{1, 2} \right\|_{L^2 H^{\sigma-3}}\lesssim  \varepsilon,   \label{1...11} \\
&\left\| U_{\neq}^3 \right\|_{L^{\infty} H^{\sigma-2}}+ \nu^{\frac{1}{2}}  \left\| t U_{\neq}^3 \right\|_{L^2 H^{\sigma-3}}\lesssim   \varepsilon   \nu^{-\frac{1}{3}}.   \label{1...13}
\end{align}
\end{thm}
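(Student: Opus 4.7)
The plan is a standard continuation/bootstrap argument. I would fix $T^{*}$ as the supremum of times $T$ on which a set of bootstrap hypotheses, obtained from the target estimates \eqref{1...9}--\eqref{1...13} by enlarging each constant by a factor of (say) $2$, is valid. Local well-posedness in $H^{\sigma}$ gives a nonempty interval, and the goal is to improve each bootstrap hypothesis to its original form under the assumption $\varepsilon < \delta \nu^{2}$, thereby forcing $T^{*}=+\infty$. The natural splitting is the one suggested by the statement itself: work with the zero mode $(u_{0}^{1},u_{0}^{2},u_{0}^{3})$ in the original frame, and with the nonzero modes $(U_{\neq}^{1},U_{\neq}^{2},U_{\neq}^{3})$ in the sheared frame \eqref{2.5}, using the system \eqref{2.10}--\eqref{2.11} and the good unknowns $K^{1},K^{2}$ introduced in subsection 1.2.3 to compensate for the $Q^{1}$--$Q^{2}$ coupling produced by rotation.

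\textbf{Zero-frequency estimates.} For $u_{0}^{1}$, projecting \eqref{1....3} onto $k=0$ kills the linear coupling, so $u_{0}^{1}$ satisfies a heat equation with purely nonlinear forcing $-(u\cdot\nabla u)_{0}^{1}$. A standard $H^{\sigma}$ energy estimate then gives \eqref{1...9}, provided the nonlinear forcing is controlled via the bootstrap hypotheses. For $u_{0}^{2}$ and $u_{0}^{3}$, the linear coupling is genuine: $u_{0}^{2}$ sees the source $-u_{0}^{1}$ (the lift-up term), and $u_{0}^{3}$ is driven through the pressure $p^{L}$. Using $\|u_{0}^{1}\|_{L^{2}H^{\sigma}}\lesssim \nu^{-1/2}\varepsilon$ from the $u_{0}^{1}$ estimate, Grönwall in $t$ loses one power of $\nu^{-1}$ and yields the $\varepsilon \nu^{-1}$ bound \eqref{1...10}. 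The nonlinear source must be split according to the $0$--$\neq$ interactions (which split further into $0\cdot0$, $0\cdot\neq$, $\neq\cdot\neq$), with every $\neq$ factor paying either an inviscid damping $\langle t\rangle^{-1}$ or an enhanced dissipation $e^{-c\nu^{1/3} t}$ to produce a time-integrable quantity, at the cost of a derivative (this is why $\sigma > 9/2$ is needed).

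\textbf{Nonzero-frequency estimates.} Working in the sheared frame, I introduce the good unknowns $K^{1}=|k,l|(k^{2}+(\eta-kt)^{2}+l^{2})^{-1/2}\widehat{Q}^{1}$ and $K^{2}=|k|(k^{2}+(\eta-kt)^{2}+l^{2})^{-1/2}\widehat{Q}^{2}$ to diagonalize the linearized block, together with the multiplier weight encoding enhanced dissipation (a Fourier weight of the form $e^{c\nu^{1/3} t}$ or equivalent). Energy estimates on $K^{1},K^{2}$ in $H^{\sigma-2}$ then yield, via the time-weighted dissipation coming from $(\partial_{Y}^{L})^{2}$, the $\nu^{1/6}$-scaled enhanced dissipation and $\nu^{1/6}\|tU_{\neq}^{1,2}\|_{L^{2}H^{\sigma-3}}$ parts of \eqref{1...11}; the pointwise-in-time bound then follows by reading off the $L^{\infty}$ norm in the estimate. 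For $U_{\neq}^{3}$, the coupling with $Q^{1},Q^{2}$ in the third equation of \eqref{2.10} provides an inhomogeneity that, when controlled using the already-established bounds on $U_{\neq}^{1,2}$ together with inviscid damping, loses the additional factor $\nu^{-1/3}$ present in \eqref{1...13}; enhanced dissipation again gives the $\|tU_{\neq}^{3}\|_{L^{2}H^{\sigma-3}}$ part.

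\textbf{Main obstacle.} The decisive step is the closing of the nonlinear estimates at the threshold $\varepsilon<\delta\nu^{2}$, rather than the classical $\nu^{3/2}$ of \cite{MR3612004}. The worst terms come from interactions of the lift-up modes $u_{0}^{2,3}$ (which can be as large as $\varepsilon\nu^{-1}$) with the nonzero modes, transported by $U\cdot\nabla_{L}Q$ and its variants; because the inviscid damping here decays only like $\langle t\rangle^{-1}$ (slower than the $\langle t\rangle^{-2}$ rate of classical Navier-Stokes, due to the $K^{1}$-$K^{2}$ coupling), one factor of $\langle t\rangle^{-1}$ is, on balance, not integrable and must be absorbed by the enhanced dissipation $e^{-c\nu^{1/3}t}$ at the price of an extra power of $\nu^{-1/3}$. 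Tracking this loss carefully through each interaction shows that closing the bootstrap requires $\varepsilon\nu^{-1}\cdot\varepsilon\cdot\nu^{-1/3}\cdot (\text{harmless factors})\ll \varepsilon$, i.e.\ $\varepsilon\lesssim \nu^{4/3}\cdot(\cdots)$; the most delicate $0$--$\neq$ interaction, together with the loss in $U_{\neq}^{3}$, ultimately forces $\varepsilon\lesssim \nu^{2}$. Once every nonlinear term has been shown to satisfy this bound, the bootstrap closes and Theorem~\ref{1.2.} follows.
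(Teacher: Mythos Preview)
Your overall architecture matches the paper's: bootstrap, zero/nonzero splitting, and the symmetrizing variables $K^{1},K^{2}$ (the paper calls their nonlinear incarnation $\check K^{1},\check K^{2}$; see \eqref{5.1}) to eliminate the $Q^{1}$--$Q^{2}$ coupling produced by rotation. Two ingredients are missing, and without them the argument does not close.

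\textbf{The stretching multiplier $m$.} You do not say how to handle the linear stretching term $2\partial_{XY}^{L}U_{\neq}^{3}=2\partial_{XY}^{L}\Delta_{L}^{-1}Q_{\neq}^{3}$ in the $Q_{\neq}^{3}$ equation of \eqref{2.10}. For $t\in[\eta/k,\eta/k+O(\nu^{-1/3})]$ this term \emph{dominates} the dissipation (see \eqref{4...3}), so an unweighted energy estimate on $Q_{\neq}^{3}$ fails. The paper introduces the bounded Fourier multiplier $m$ of \eqref{4...4}, satisfying $\nu^{2/3}\lesssim m\le 1$ and $\dot m/m=\dot w/w$ on precisely this bad interval, so that $\mathcal{LS}3$ is absorbed into $\|\sqrt{-\dot m m}\,MQ_{\neq}^{3}\|^{2}$ and $\tfrac{\nu}{2}\|mM\nabla_{L}Q_{\neq}^{3}\|^{2}$ (see \eqref{6.3}). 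Your proposed weight $e^{c\nu^{1/3}t}$ encodes enhanced dissipation but does nothing for this frequency-localized transient growth; you need both $m$ and the ghost multiplier $M$ (Lemma~\ref{lem4.2}).

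\textbf{Bootstrap quantities and the $\nu^{2}$ threshold.} Bootstrapping directly on \eqref{1...9}--\eqref{1...13} is too coarse. The paper bootstraps on the weighted quantities $M\check K_{\neq}^{1,2}$, $mMQ_{\neq}^{3}$, $Q_{0}^{i}$, $U_{0}^{i}$ (Proposition~\ref{pro4.1}); the target estimates are then consequences (Propositions~\ref{pro4.2}--\ref{pro4.3}). In particular the $\nu^{-1/3}$ loss on $Q_{\neq}^{3}$ arises not from ``coupling with $Q^{1},Q^{2}$'' but from the lift-up term $\mathcal{LU}3=\partial_{ZY}^{L}U_{\neq}^{1}$ (Remark after Proposition~\ref{pro4.1} and \eqref{6.5}). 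Your power-counting for $\varepsilon\lesssim\nu^{2}$ is also off: the sharp constraint comes from the nonlinear pressure interaction $\mathcal{NLP}\check K_{\neq}^{2}(U_{0},U_{\neq})$ in \eqref{4**2}, where $\|\nabla_{L}M\check K_{\neq}^{2}\|_{L^{2}H^{N}}\lesssim\varepsilon\nu^{-1/2}$, $\|\nabla U_{0}^{2,3}\|_{L^{\infty}H^{N}}\lesssim\varepsilon\nu^{-1}$, and $\|mMQ_{\neq}^{3}\|_{L^{2}H^{N}}\lesssim\varepsilon\nu^{-1/2}$ combine to give $C_{0}^{2}\varepsilon^{3}\nu^{-2}$, which must be $\lesssim\varepsilon^{2}$.
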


\begin{rem}\label{1.2.2}
    The latter two estimates in \eqref{1...11} and the second term in \eqref{1...13} emphasize the enhanced dissipation effects of $U_{\neq}^{1, 2}$ and   $U_{\neq}^3$, respectively, which is discussed later in Proposition \ref{pro4.3}. Notice that the estimate \eqref{1...10} of the zero frequency $u_0^{2, 3}$ is consistent with the bound \eqref{1.*15} which is  influenced by the lift-up effect.
\end{rem}

\begin{rem}	
Compared to our previous work  \cite{HSX2024}, Theorem \ref{1.2.} considers the case where the rotational intensity is equal to the coefficient of Couette flow. Here we emphasize that this situation brings more negative effects to the study of nonlinear stability, since rotation generates lift-up effects in both directions. As a result, when $\beta=1$, we can only obtain the stability threshold  $\gamma= 2$.
\end{rem}

\begin{rem}
   In fact,  if we consider the dynamic stability of \eqref{1.1} near the Couette flow with  shear rate $\beta>1$, that is, the steady state $( \beta y,0,0)$, then Theorem \ref{1.2.} still holds.
   
\end{rem}

\textit{1.5. Brief comment of key ideas.} Compared with the previous work in \cite{MR3612004} where Bedrossian, Germain and Masmoudi treated the stability of the classical Navier-Stokes equations, the Coriolis force causes additional difficulties in developing a priori estimates for solutions to (NSC).  Motivated by the method  in \cite{Zelati2023} and operator semigroup theory, we give the stability of linearized perturbation equations. Specifically, we observed that a new lift-up effect occurring in the  velocity field $u_{0}^{2}$ and $u_{0}^{3}$ due to rotation which differs from the classical Navier-Stokes equations, see subsection \ref{section3.2}. Based on the linear results, Fourier multipliers   precisely encoding the interplay between the dissipation and possible growth are designed.  These multipliers are then used to make energy estimates that lose the minimal amount of information from the linear terms, see subsection \ref{4.1}. We not only have understood and quantified the linear terms,  but also need to understand how this linear behavior interacts with the nonlinearity. Different from  the analysis of nonlinear stability of the classical Navier-Stokes equations,  we do not   directly estimate the non-zero frequency $Q_{\neq}^1$ and $Q_{\neq}^2$ due to the  difficulties in estimating the product of the term such as $\mathcal{LU}2$ (i.e., the term $ Q^{1}-\partial_{YY}^{L}U^{1}$ appeared in $Q^{2}$ equation). Indeed, we can not  construct a proper Fourier multiplier to deal with this linear term. Fortunately, we find that  when one deals with the linear stability problem in section \ref{section3}, $Q^{1}$ and $Q^{2}$ equations are reduced to the linear symmetric structures by using variable substitution \eqref{3.4}. Inspired by these ideas, we construct $\check{K}^{1}$ and $\check{K}^{2}$   mentioned  in section \ref{section4} which can avoid the processing of linear coupling term. Moreover, $\check{K}^{1}$ and $\check{K}^{2}$ satisfy the equations \eqref{HSX88} and \eqref{HSX99}, respectively. Thus, making full use of the linear symmetric structure in  $\check{K}^{1}$ and $\check{K}^{2}$ equations, we can estimate the non-zero frequencies $\check{K}^{1}_{\neq}$ and  $\check{K}^{2}_{\neq}$ together, which produces a good elimination mechanism that allows us to close the a priori assumption of  $\check{K}^{1,2}_{\neq}$. These treatments are the key to estimating nonlinear stability of the rotating flow.

The rest of the paper is organized as follows. In section \ref{Preliminaries} we give some preliminaries for later use. In section \ref{section3}, we discuss the linear stability effects of solutions and give a rigorous mathematical proof of Theorem \ref{1.1.}. Section \ref{section4} is devoted to the brief idea of proof  of Theorem \ref{1.2.}, including the introduction of Fourier multipliers, the bootstrap hypotheses and the weighted energy estimates of non-zero frequency and zero frequency. The  proof details of nonlinear stability are given in sections \ref{sec5}--\ref{sec7}.

\section{Preliminaries}\label{Preliminaries} \label{sec2}
\subsection{Notation}
Given two quantities $A$ and $B$, we 
 denote $A \lesssim B$, if there exists a constants $C$ such that $A\leqslant CB$ where the constant $C>0$ depends only on $\sigma$, but not on $\delta$, $\nu$, $C_0$ and $C_1$ (both $C_0$ and $C_1$ are defined on subsection \ref{4.2}). We similarly denote $A \ll B$ if $A \leqslant cB$ for a small constant $c \in(0, 1)$ to emphasize the small size of the implicit constant.  $\sqrt{1+t^2}$ is represented by ${\left\langle {t} \right\rangle}$ and  $\sqrt{k^2+\eta^2+l^2}$ is  denoted by $|k, \eta, l|$.
We use the shorthand notation $dV=dxdydz.$ For two functions $f$ and $g$ and a norm $\|\cdot\|_{X}$, we write 
$$\|(f,g)\|_{X}=\sqrt{\|f\|_{X}^{2}+\|g\|_{X}^{2}}.$$
Unless specified otherwise, in the rest of this section $f$ and $g$ denote functions from $\mathbb{T} \times \mathbb{R} \times \mathbb{T}$ to $\mathbb{R}^{n}$ for some $n\in \mathbb{N}$.

\subsection{Fourier Analysis and multiplier} 
We introduce the following notation for the Fourier transform  of a function $f= f(x, y, z)$. Given $(k,\eta,l)\in \mathbb{Z}\times \mathbb{R}\times \mathbb{Z}$, define 
\begin{equation}\label{2.1}
\mathcal{F}(f)=\widehat{f}\left(k, \eta, l\right)=\int_{x \in \mathbb{T}}\int_{y \in \mathbb{R}} \int_{z \in \mathbb{T}}f\left(x, y, z\right)e^{- i 2\pi \left(k x+ \eta y+ l z\right)}  dxdydz.
\end{equation}
Denoting the inverse operation to the Fourier transform by $\mathcal{F}^{-1}$ or $\check{\cdot}$
\begin{equation}\label{2.2}
\mathcal{F}^{-1}(f)=\check{f} \left(x, y, z\right)=\sum_{k \in \mathbb{Z}}\int_{\eta \in \mathbb{R}} \sum_{l \in \mathbb{Z}} f \left(k, \eta, l\right)e^{2\pi i \left(k x+ \eta y+ l z\right)}  d\eta.
\end{equation}
For a general Fourier multiplier with symbol  $m(k,\eta,l)$, we write $mf$ to denote $\mathcal{F}^{-1}(m(k,\eta,l)\hat{f})$. Thus, it holds
\begin{equation*}
	\widehat{m(D)f}=m(k,\eta,l)\widehat{f}.
\end{equation*}
We also define the Fourier symbol of $\nabla_{L}$ and  $-\Delta_{L}$ denoted by (\ref{2.6}) and (\ref{2.7}) respectively as
\begin{equation}\label{HSX444}
|\widehat{\nabla_{L}}|=|k,\eta-kt,l|,
\end{equation}
and 
\begin{equation}\label{2.8}
w(t, k, \eta, l)\triangleq \widehat{-\Delta_{L}}=k^2+(\eta- kt)^2+l^2.
\end{equation}
Note that $w$ is time dependent and thus its time derivative is 
\begin{equation}\label{2.9}
	\dot{w}=-2  k (\eta- kt).
\end{equation}

To avoid conflicting with the subscript just defined in \eqref{HSX333}, when $f$ is vector valued we use a superscript to denote the components. For example, if $f$ is valued in $\mathbb{R}^{3}$ then we write $f=(f^{1},f^{2},f^{3})$.

\subsection{Functional spaces}
The Sobolev space $H^{s} (s\geqslant0) $ is given  by  the norm 
\begin{equation*}
\left\| f \right\|_{H^{s}}=\left\| {\left\langle {D} \right\rangle}^{s} f \right\|_{L^2}=\left\| {\left\langle {k, \eta, l} \right\rangle}^{s} \widehat{f} \right\|_{L^2}.
\end{equation*}
Recall that, for $s>\frac{3}{2}$, $H^{s}$ is an algebra. Hence, for any $f,g\in H^{s}$, one has
\begin{align}\label{2.3}
	\left\|f g\right\|_{H^{s}} \lesssim \left\|f \right\|_{H^{s}}\left\|g\right\|_{H^{s}}.
\end{align}
We write the associated inner product as
$$\langle f,g \rangle_{H^{s}}=\int \langle\nabla\rangle^{s}f\cdot \langle \nabla \rangle^{s} g dV.$$
For functions $f(t,x,y,z)$ of space and time defined on the time interval $(a,b)$, we define the Banach space $L^{p}(a,b;H^{s})$ for $1\leqslant p\leqslant \infty$ by the norm
$$\|f\|_{L^{p}(a,b;H^{s})}=\|\|f\|_{H^{s}}\|_{L^{p}(a,b)}.$$
For simplicity of notation, we usually simply write $\|f\|_{L^{p}H^{N}}$ since
the time-interval of integrating in this work will be the same basically everywhere.
\subsection{Littlewood-Paley decomposition and paraproduct (see \cite{BCD2011})}\label{2.*.4}
Let  $\chi$ be a smooth, nonnegative function supported in the annulus  $B(0,5) \backslash B(0,1)$  of  $\mathbb{R}^{3}$ , and satisfy  $\sum_{i=-\infty}^{+\infty} \chi\left(\frac{\xi}{2^{i}}\right)=1$  for  $\xi \neq 0$, and define the Fourier multipliers
\begin{align*}
P_{i}=\chi\left(\frac{D}{2^{i}}\right), \quad P_{\leqslant I}=\sum_{i=-\infty}^{I} \chi\left(\frac{D}{2^{i}}\right), \quad P_{>I}=1-P_{\leqslant I}. 
\end{align*}
These Fourier multipliers enable us to split the product into two pieces such that each corresponds to the interaction of high frequencies of one function with low frequencies of the other:
\begin{align*}
f g=f_{High} g_{Low}+f_{Low} g_{High} = \sum_{i} P_{i} f P_{\leqslant i} g+ \sum_{i} P_{\leqslant i-1} f P_{i} g,
\end{align*}
%For $s >\frac{3}{2}$, $H^s$ is an algebra. Hence, for any $f$, $g \in H^s$, we have
with
\begin{equation*}
	 f_{High} g_{Low}=\sum_{i} P_{i} f P_{\leqslant i} g,\quad f_{Low} g_{High} =\sum_{i} P_{\leqslant i-1} f P_{i} g.
\end{equation*}
We record the estimate
\begin{equation*}
	\|f_{High}g_{Low}\|_{H^{s}} \lesssim \|f\|_{H^{s}}\|g\|_{H^{\sigma}}, \quad \text{for}~s>0,\sigma >\frac{3}{2}.
\end{equation*}

\subsection{Shorthand} 
It will be useful to define some shorthands for the
various terms appearing in \eqref{2.10}. Let's start with the linear terms, appearing in the equations for $Q^1$, $Q^2$ and $Q^3$:
\begin{align*}
&\mathcal{LU}2= Q^1- \partial_{Y Y}^L U^1   &(\text{lift up term}),  \\
&\mathcal{LU}3=- \partial_{Z Y}^L U^1  &(\text{lift up term}), \\
&\mathcal{LP}=- \partial_{X m}^L U^2, \, m=1, 3  &(\text{linear pressure term}),\\
&\mathcal{LS}1= \partial_{X Y}^L U^1   &(\text{linear stretching term}), \\
&\mathcal{LS}2= \partial_{X Y}^L U^2   & (\text{linear stretching term}), \\
&\mathcal{LS}3=2 \partial_{X Y}^L U^3  & (\text{linear stretching term}).
\end{align*}
Next, we consider the nonlinear terms in \eqref{2.10}. In the following, $i$, $j$ run in \{1, 2, 3\},  and  $m=1, 2, 3$, while $\varepsilon_{1}$ and $\varepsilon_{2}$ may be $0$ or $\neq$:
\begin{equation*}
\begin{aligned}
&\mathcal{TQ}^{m}=U \cdot \nabla_{L} Q^{m}  \quad \quad \quad &(\text{transport term}),  \\
&\mathcal{NLS}1(j,\varepsilon_{1},\varepsilon_{2})=Q_{\varepsilon_{1}}^{j} \partial_{j}^{L} U_{\varepsilon_{2}}^{m}, \quad \quad  \quad &(\text{nonlinear stretching term}),\\
&\mathcal{NLS}2(i,j,\varepsilon_{1},\varepsilon_{2})=2 \partial_{i}^{L} U_{\varepsilon_{1}}^{j} \partial_{i j}^{L} U_{\varepsilon_{2}}^{m}  \quad \quad  \quad & (\text{nonlinear stretching term}), \\
&\mathcal{NLP}(i,j,\varepsilon_{1},\varepsilon_{2})=\partial_{m}^L \left(\partial_{i}^{L} U_{\varepsilon_{1}}^{j} \partial_{j}^{L} U_{\varepsilon_{2}}^{i}\right) \quad \quad \quad & (\text{nonlinear pressure term}).
\end{aligned}
\end{equation*} 
We will often abuse notation slightly and use the same shorthands above to denote a terms contribution to an energy estimate. The origin of these terms is more or less clear except perhaps the stretching terminology, which is due to the similarity these terms have with the vortex stretching term in the 3D Navier-Stokes equations in vorticity form.

\section{The linear stability }\label{section3}	
In this section, we will give a proof of  Theorem \ref{1.1.}. We neglect the effect of nonlinear terms on the right-hand side of (\ref{1....3}),  that is, we only need to consider the  stability of the linear equation (\ref {1....4}). The ideas of the proof are mainly based on a change of variables in the Fourier space symmetrization framework and establish the appropriate energy functional.  

\subsection{Linear enhanced dissipation and inviscid damping}\label{section3.1}
We begin with  the analysis of the linearized perturbation equation (\ref{1....4}) from the nonzero $x$-modes, corresponding to $k\neq 0$ on the Fourier side.  To better understand this problem, it is convenient to work with the new variable $q^{i}=\Delta u^{i}$. Now we ignore the  nonlinear terms on the right hand side of (\ref{2.4}), then $q^{1},q^{2}$ satisfy the following equations
\begin{align}\label{3.1}
	\left\{\begin{array}{l}
		\partial_{t} q^1+ y \partial_{x} q^1- \partial_{xy}(-\Delta)^{-1} q^1+ \partial_{xx}(-\Delta)^{-1} q^2=\nu \Delta q^1,  \\
		\partial_{t} q^2+ y \partial_{x} q^2- \Delta_{x,z}(-\Delta)^{-1} q^1- \partial_{xy}(-\Delta)^{-1} q^2=\nu \Delta q^2.
	\end{array}\right.
\end{align}
In the moving frame (\ref{2.5}),  (\ref{3.1}) can be written on the Fourier side as  
\begin{align}\label{3.2}
	\left\{\begin{array}{l}
		\partial_{t} \widehat{Q}^1+ k(\eta- kt)w^{-1} \widehat{Q}^1- k^2 w^{-1} \widehat{Q}^2=-\nu w \widehat{Q}^1,  \\
		\partial_{t} \widehat{Q}^2+ |k, l|^2 w^{-1} \widehat{Q}^1+ k(\eta- kt) w^{-1} \widehat{Q}^2=-\nu w \widehat{Q}^2.
	\end{array}\right.
\end{align}
Thus, the linearized equations (\ref{1....4}) for $u^3$  translate in this setting
\begin{equation}\label{3.3}
	\partial_{t} \widehat{U}^3+\nu w \widehat{U}^3= kl w^{-1} \widehat{U}^2+  l(\eta- kt) w^{-1} \widehat{U}^1.
\end{equation}

Due to the coupling between $\widehat{Q}^1$  and $\widehat{Q}^2$ in (\ref{3.2}) which brings research difficulties  for the linear stability problems,  we introduce the new unknowns
\begin{equation}\label{3.4}
	K_1\overset{\Delta}{=}|k, l| w^{-\frac{1}{2}} \widehat{Q}^1, \quad  \quad  K_2\overset{\Delta}{=}|k|w^{-\frac{1}{2}} \widehat{Q}^2,
\end{equation}
then \eqref{3.2} is symmetrized into the following structure
\begin{align}\label{3.5}
	\left\{\begin{array}{l}
		\partial_{t} K_1- |k| |k, l| w^{-1} K_2=-\nu w K_1,  \\
		\partial_{t} K_2+ |k| |k, l| w^{-1} K_1=-\nu w K_2.
	\end{array}\right.
\end{align}
Taking the $L^2$-inner product of \eqref{3.5} yields
\begin{equation}\label{3.6}
	\dfrac{1}{2}\dfrac{d}{dt}\left( |K_1|^2 + |K_2|^2 \right)+ \nu w \left( |K_1|^2 + |K_2|^2 \right)=0.
\end{equation}
Then, we integrate \eqref{3.6}  in time
\begin{equation}\label{3.7}
	|K_1 (t)|^2 + |K_2 (t)|^2 =e^{-2 \nu \int_{0}^{t} w(\tau) d\tau} (|K_1 (0)|^2 + |K_2 (0)|^2).
\end{equation}
Note that
\begin{equation}\label{3.8}
	\int_{0}^{t} w(\tau) d\tau=(k^2 + l^2)t+\left[ (\eta-\dfrac{1}{2} kt)^2 +\dfrac{1}{12} k^2 t^2\right]t \geqslant \dfrac{1}{12} k^2 t^3.
\end{equation}
Hence, we have
\begin{equation}\label{3.9}
	|K_1 (t)|^2 + |K_2 (t)|^2 \leqslant e^{-\frac{\nu}{6}  k^2 t^3} \left(|K_1 (0)|^2 + |K_2 (0)|^2 \right).
\end{equation}

Now, to derive estimates on $u^{1}$ and $u^{2}$ in the original variables, as in Theorem \ref{1.1.}, we argue as follows. Note that $k\neq 0$. Recalling $Q^{i}=\Delta_{L}U^{i}, i=1,2$ and  using \eqref{3.4}, \eqref{3.9},  it yields
\begin{align}\label{3.10}
	\|u^{1}(t)\|_{L^{2}}^{2}=\left\|  U^1(t) \right\|_{L^2}^2=&\left\|  \Delta_{L}^{-1} Q^1 \right\|_{L^2}^2=\sum_{(k, l) \in \mathbb{Z}^2}\int_{\mathbb{R}} |k, l|^{-2} w^{-1} |K_1 (t)|^2 d \eta \nonumber\\
	\leqslant &\, \sum_{(k, l) \in \mathbb{Z}^2} e^{-\frac{\nu}{6}  k^2 t^3} \int_{\mathbb{R}} |k, l|^{-2} w^{-1} \left(|K_1 (0)|^2 + |K_2 (0)|^2 \right)d \eta \nonumber\\
	\leqslant &\, \sum_{(k, l) \in \mathbb{Z}^2} e^{-\frac{\nu}{6} k^2 t^3} \int_{\mathbb{R}} |k, l|^{-2} w^{-1} \left(|k, l|^2 |k, \eta, l|^2 |U^1 (0)|^2+ |k|^2 |k, \eta, l|^2 |U^2 (0)|^2 \right)d \eta \nonumber\\
	\lesssim &\, \sum_{(k, l) \in \mathbb{Z}^2} e^{-\frac{\nu}{6}  k^2 t^3} \int_{\mathbb{R}}  {\left\langle { t} \right\rangle}^{-2} |k, \eta, l|^4 \left(|U^1 (0)|^2+ |U^2 (0)|^2 \right)d \eta \nonumber\\
	\lesssim &\, \dfrac{e^{-\frac{\nu}{6}  k^2 t^3}}{{\left\langle { t} \right\rangle}^{2}} \left( \left\|  u_{in}^1 \right\|_{H^2}^2 +  \left\|  u_{in}^2 \right\|_{H^2}^2 \right),
\end{align}
%Similarly, one obtains
and
\begin{align}\label{3.11}
	\|u^{2}(t)\|_{L^{2}}^{2}=\left\|  U ^2(t) \right\|_{L^2}^2=&\left\|  \Delta_{L}^{-1} Q^2 \right\|_{L^2}^2=\sum_{(k, l) \in \mathbb{Z}^2} \int_{\mathbb{R}} |k|^{-2} w^{-1} |K_2 (t)|^2 d \eta \nonumber \\
	\leqslant &\, \sum_{(k, l) \in \mathbb{Z}^2}e^{-\frac{\nu}{6}  k^2 t^3} \int_{\mathbb{R}} |k|^{-2} w^{-1} \left(|K_1 (0)|^2 + |K_2 (0)|^2 \right)d \eta \nonumber\\
	\lesssim &\, \sum_{(k, l) \in \mathbb{Z}^2} e^{-\frac{\nu}{6}  k^2 t^3} \int_{\mathbb{R}}  {\left\langle { t} \right\rangle}^{-2} |k, \eta, l|^4 \left(|U^1 (0)|^2+ |U^2 (0)|^2 \right)d \eta \nonumber\\
	\lesssim &\, \dfrac{e^{-\frac{\nu}{6}  k^2 t^3}}{{\left\langle { t} \right\rangle}^{2}} \left( \left\|  u_{in}^1 \right\|_{H^2}^2 +  \left\|  u_{in}^2 \right\|_{H^2}^2 \right),
\end{align}
respectively, here the inequality $w^{-1} \lesssim {\left\langle { t} \right\rangle}^{-2} |k, \eta, l|^2$ is used.

Next, by using \eqref{3.3}, \eqref{3.4} and (\ref{3.7}), we immediately have
\begin{align}\label{3.12}
	\frac{d}{dt}|\widehat{U}^3|+\nu w |\widehat{U}^3| &\leqslant   kl w^{-2} |\widehat{Q^{2}}|+ l(\eta- kt)w^{-2} |\widehat{Q^{1}}| \nonumber \\
	&\leqslant   k l |k|^{-1}w^{-\frac{3}{2}} |K_{2}|+ \frac{l}{ |k,l|}(\eta- kt) w^{-\frac{3}{2}} |K_{1}| \nonumber\\
	&\leqslant  w^{-1}|K_{2}|+ w^{-1}|K_{1}|\nonumber\\ 
	&\leqslant \sqrt{2}w^{-1} e^{-\nu \int_{0}^{t} w(s) ds}(|K_1(0)|^2+|K_2(0)|^2)^{\frac{1}{2}},
\end{align}
here we have used the fact $|l|\leqslant w^{\frac{1}{2}}$ and $|\eta- kt|\leqslant w^{\frac{1}{2}}.$
By direct calculation, we have
\begin{align*} 
	\int_{\mathbb{R}} w^{-1}(t)dt&=\int_{\mathbb{R}}\frac{1}{k^{2}+(\eta- kt)^{2}+l^{2}}dt \\&\leqslant 
	\frac{1}{k^{2}}\int_{\mathbb{R}}\frac{1}{1+(\frac{\eta}{k}- t)^{2}}dt\\&\leqslant k^{-2}\pi\\&\leqslant 6|k|^{-1}.
\end{align*}
Thus, we integrate \eqref{3.12} with respect to $t$, which obtains
\begin{align}\label{3.13}
	|\widehat{U}^3|\leqslant &\, e^{-\nu \int_{0}^{t} w(s) ds}\left[|\widehat{U}^3(0)|+\sqrt{2}\int_{0}^{t} w^{-1}(s) e^{-\nu\int_{0}^{s}w(\tau)d\tau}ds (|K_1(0)|^2+|K_2(0)|^2)^{\frac{1}{2}} \right]  \nonumber \\
	\leqslant &\, e^{-\nu \int_{0}^{t} w(s) ds}\left[|\widehat{U}^3(0)|+ \sqrt{2}\int_{0}^{t} w^{-1}(s)ds  (|K_1(0)|^2+|K_2(0)|^2)^{\frac{1}{2}} \right] \nonumber\\
	\leqslant &\, e^{-\frac{\nu}{12} k^2 t^3}\left[|\widehat{U}^3(0)|+ 12 \dfrac{1}{|k|} (|K_1(0)|^2+|K_2(0)|^2)^{\frac{1}{2}} \right].
\end{align}
Using \eqref{3.4} and \eqref{3.13} yields
\begin{align}\label{3.15}
	\|u^{3}(t)\|_{L^{2}}^{2}=\left\|U  ^3(t) \right\|_{L^2}^2 \lesssim &\, \sum_{(k, l) \in \mathbb{Z}^2} e^{-\frac{\nu}{6} k^2 t^3}\int_{\mathbb{R}}\left[|\widehat{U}^3(0)|^2+ \dfrac{1}{|k|^2} \left(|K_1(0)|^2+|K_2(0)|^2 \right) \right] d\eta \nonumber\\
	\lesssim &\, \sum_{(k, l) \in \mathbb{Z}^2} e^{-\frac{\nu}{6} k^2 t^3}\int_{\mathbb{R}}\Big[|\widehat{U}^3(0)|^2+ |k, l|^2 |k, \eta, l|^2 |\widehat{U}^1(0)|^2 \nonumber \\
	&+ |k|^2 |k, \eta, l|^2 |\widehat{U}^2(0)|^2 \Big] d\eta  \nonumber\\
	\lesssim &\, e^{-\frac{\nu}{6} k^2 t^3} \left( \left\| u_{in}^3 \right\|_{L^2}^2+ \left\| u_{in}^1 \right\|_{H^2}^2+ \left\| u_{in}^2 \right\|_{H^2}^2\right).
\end{align}

Finally, note that $k\neq 0$. Hence, when $s=0$, we directly obtain the estimates \eqref{1.11} and \eqref{1.12} on $U_{\neq}^{i}$ $(i=1, 2, 3)$ in Theorem {\ref{1.1.}} by (\ref{3.10}), (\ref{3.11}) and (\ref{3.15}).  Furthermore, for any $s>0$, we  obtain the estimates \eqref{1.11} and \eqref{1.12} by the same way.

\subsection{The lift-up effect }\label{section3.2}
We first review the lift-up effect for the homogeneous Navier-Stokes equations. Recall that we write $f_{0}=\int_{\mathbb{T}}f(x,y,z)dx$ for the $x$-average of a function. Setting $k=0$ and integrating the velocity equation in $x$ yields the system 
\begin{equation}\label{HSX1}
\partial_{t}u_{0}+\begin{pmatrix}
	u_{0}^{2}\\
	0\\
	0
\end{pmatrix}
=\nu \Delta u_{0}.
\end{equation}
The explicit solution of (\ref{HSX1}) can be written as
	\begin{align}\label{1.14}
	\left\{\begin{array}{l}
		u_{0}^1(t, y, z)=e^{\nu \Delta_{y, z}t}{\left( {\left({u}^{1}_\mathrm{in} \right)_0}-t {\left({u}^{2}_\mathrm{in} \right)_0} \right)},\\
		u_{0}^2(t, y, z)=e^{\nu \Delta_{y, z}t}{{\left({u}^{2}_\mathrm{in} \right)_0}},\\
		u_{0}^3(t, y, z)=e^{\nu \Delta_{y, z}t}{{\left({u}^{3}_\mathrm{in} \right)_0}}.
	\end{array}\right.
\end{align}
The lift-up effect refers to the linear in time growth of $u_{0}^{1}$ predicted by the formula above for $t\lesssim \nu^{-1}.$ In general, the best
global in time estimate for the Navier-Stokes equations that one can expect   is  
$$\|u_{0}^{1}\|_{L^{\infty}H^{s}}^{2}+\nu\|u_{0}^{1}\|_{L^{2}H^{s}}^{2}\lesssim\left(\varepsilon \nu^{-1}\right)^{2}.$$

Now we turn our attention to the Navier-Stokes-Coriolis case. Considering projection onto zero frequency in $x$  of \eqref{1....4},  it reads
\begin{align}\label{3.16}
	\left\{\begin{array}{l}
		\partial_{t} u_0^1=\nu \Delta_{y, z} u_0^1,  \\
		\partial_{t} u_0^2+ u_0^1+ \partial_{y}(-\Delta_{y, z})^{-1}\partial_{y} u_0^1=\nu \Delta_{y, z} u_0^2, \\
		\partial_{t} u_0^3+ \partial_{z}(-\Delta_{y, z})^{-1}\partial_{y} u_0^1=\nu \Delta_{y, z} u_0^3.
	\end{array}\right.
\end{align}
Due to incompressibility, we have $\widehat{u}_{0, 0}^2(\eta)=0$. When $l=0$, the system \eqref{3.16} can be degenerated into two decoupled heat equations in $\mathbb{R}$. Hence, we only consider the case when $l \neq 0$. 
Applying the Fourier transform to \eqref{3.16}, we obtain 
\begin{align}\label{3.17}
	\partial_{t} \begin{pmatrix}
		\widehat{u_0^1} \\
		\widehat{u_0^2} \\
		\widehat{u_0^3}
	\end{pmatrix}=\begin{pmatrix}
		-\nu\left(\eta^2+l^2\right) & 0 & 0 \\
		- \dfrac{l^2}{\eta^2+l^2} & -\nu\left(\eta^2+l^2\right) & 0 \\
		 \dfrac{\eta l}{\eta^2+l^2} & 0 & -\nu\left(\eta^2+l^2\right)
	\end{pmatrix}\begin{pmatrix}
		\widehat{u_0^1} \\
		\widehat{u_0^2} \\
		\widehat{u_0^3}
	\end{pmatrix} \overset{\Delta}{=} \mathcal{M}\begin{pmatrix}
		\widehat{u_0^1} \\
		\widehat{u_0^2} \\
		\widehat{u_0^3}
	\end{pmatrix}.
\end{align}
Then the solution to (\ref{3.17}) can be explicitly computed as
\begin{align}\label{3.21}
	\begin{pmatrix}
		\widehat{u_0^1}(t, \eta, l) \\
		\widehat{u_0^2}(t, \eta, l) \\
		\widehat{u_0^3}(t, \eta, l)
	\end{pmatrix}=
	e^{\mathcal{M} t}\begin{pmatrix}
		\widehat{(u_{\mathrm{in}}^1)_0} \\
		\widehat{(u_{\mathrm{in}}^2)_0} \\
		\widehat{(u_{\mathrm{in}}^3)_0} 
	\end{pmatrix}.
\end{align}

We consider the following characteristic equation
\begin{equation}\label{3.18}
	\det\left( \lambda I-\mathcal{M}\right)=0,
\end{equation}
note that the eigenvalue $\lambda=-\nu |\eta, l|^2$ is a triple root. Then we can compute the semigroup $e^{\mathcal{M} t}$ of \eqref{3.17}, namely
\begin{align}\label{3.19}
	e^{\mathcal{M} t}=&\, e^{\lambda t} e^{\left(\mathcal{M}-\lambda I\right)t} \nonumber\\
	=&\, e^{\lambda t} \left[ I+t \left(\mathcal{M}- \lambda I\right)+\frac{t^2}{2!}\left(\mathcal{M}- \lambda I\right)^2\right] \nonumber\\
	=&\, e^{-\nu |\eta, l|^2 t}\begin{pmatrix}
		1 & 0 & 0 \\
		- \dfrac{l^2}{\eta^2+l^2} t & 1 & 0 \\
		 \dfrac{\eta l}{\eta^2+l^2} t & 0 & 1
	\end{pmatrix}.
\end{align}
Putting (\ref{3.19}) into (\ref{3.21}),	 the solution of \eqref{3.17}  in Fourier space can be expressed as follows
\begin{align*}
	\left\{\begin{array}{l}
		\widehat{u_0^1}(t, \eta, l)=e^{-\nu |\eta, l|^2 t } \widehat{{\left({u}^{1}_\mathrm{in} \right)_0}},  \\
		\widehat{u_0^2}(t, \eta, l)=e^{-\nu |\eta, l|^2 t } \left(\widehat{{\left({u}^{2}_\mathrm{in} \right)_0}}- \dfrac{l^2}{|\eta, l|^2} t \,\widehat{{\left({u}^{1}_\mathrm{in} \right)_0}}\right),  \\
		\widehat{u_0^3}(t, \eta, l)=e^{-\nu |\eta, l|^2 t } \left(\widehat{{\left({u}^{3}_\mathrm{in} \right)_0}}+ \dfrac{\eta l}{|\eta, l|^2} t \,\widehat{{\left({u}^{1}_\mathrm{in} \right)_0}}\right),
	\end{array}\right.
\end{align*}
which combined with the inverse Fourier transform, we obtain \eqref{1.13}. Therefore, together with subsection \ref{section3.1} we finish the proof of Theorem \ref{1.1.}.

\section{Outline of the proof of Theorem {\ref{1.2.}}}\label{section4}	
In this section, we will list the outline of proof of Theorem \ref{1.2.}. Firstly, we present several definitions of the relevant Fourier multipliers, which are essential and useful in the process of proving nonlinear instability.

\subsection{The Fourier multipliers}\label{4.1}
Consider the following linear equation
\begin{equation}\label{4...1}
\partial_{t}f+2 \partial_{X Y}^{L} \Delta_{L}^{-1}f-\nu \Delta_{L} f=0,
\end{equation}
which occurs as some of the main linear terms governing $Q^{i}$ $(i=1, 2, 3)$ in (\ref{2.10}). The equation (\ref{4...1}) can be seen as a competition between the linear stretching term 
$ \partial_{X Y}^{L} \Delta_{L}^{-1}f$ and the dissipation term  $\nu \Delta_{L} f$.    
Applying the Fourier transform to \eqref{4...1} yields
\begin{equation*}
\partial_{t}\widehat{f}+2\frac{ k (\eta- kt)}{k^2+(\eta- kt)^2+l^2} \widehat{f}+\nu \left(k^2+(\eta- kt)^2+l^2\right) \widehat{f}=0.
\end{equation*}
If $k\neq 0$, the factor $2\frac{ k (\eta- kt)}{k^2+(\eta- kt)^2+l^2}$ is positive for $ t<\frac{\eta}{k}$,  in which case the term $ \partial_{X Y}^{L} \Delta_{L}^{-1}f$ can be viewed as a damping term. As for the factor $\nu \left(k^2+(\eta- kt)^2+l^2\right)$, this implies enhanced dissipation for $k\neq 0$. When $k\neq 0$, we have  the following inequality, which compares the sizes of these two factors 
\begin{equation}\label{4...3}
\nu \left( k^2+\left(\eta- kt\right)^2+l^2 \right) \gg \frac{ |k \left( \eta- kt\right)|}{k^2+\left(\eta- kt\right)^2+l^2}, \text{ if } \Big| t-\frac{\eta}{k} \Big|\gg \nu^{-\frac{1}{3}}.
\end{equation}
Indeed,    $\frac{ |k \left( \eta- kt\right)|}{\nu \left(k^2+\left(\eta- kt\right)^2+l^2\right)^2} \leqslant \frac{ | t-\frac{\eta}{k}|}{\nu \left(1+| t - \frac{\eta}{k}|^2\right)^2} \leqslant 1$ and note that  $\frac{ x}{\nu(1+x^2)^2} \ll 1$ for $|x|\gg  \nu^{-\frac{1}{3}}$. 
Thus, from \eqref{4...3}, if $\Big| t-\frac{\eta}{k} \Big|\gg \nu^{-\frac{1}{3}}$, the dissipation term $\nu \Delta_{L} f$ overcomes the stretching term $ \partial_{XY}^{L}\Delta_{L}^{-1}f$. 

In fact,  the trickiest is the case when  $0 <  t -\frac{\eta}{k}\lesssim \nu^{-\frac{1}{3}}$.  At this point, the stretching overcomes dissipation. To deal with this rang of $t$,   we  introduce some definitions of  the Fourier multipliers

	$(1)$ Define the multiplier $m(t, k, \eta, l)$ by $m(t=0, k, \eta, l)=1$ and the following ordinary differential  equations:
	\begin{align}\label{4...4}
	\frac{\dot{m}}{m}=\left\{\begin{array}{ll}
	\frac{2  k \left( \eta-  kt \right)}{k^2+\left(\eta- kt\right)^2+l^2} & \text{ if }   t \in \left[ \frac{\eta}{k}, \frac{\eta}{k}+1000 \nu^{-\frac{1}{3}} \right], \\
	0  & \text{ if }   t \notin \left[ \frac{\eta}{k}, \frac{\eta}{k}+1000  \nu^{-\frac{1}{3}} \right].
	\end{array}\right.
	\end{align}
This multiplier is such that if $f$ solves the above equation (\ref{4...1}) and $0< t-\frac{\eta}{k}<1000\nu^{-1/3} $, then $mf$ solves
$$ \partial_{t}(mf)-\nu \Delta_{L}(mf)=0,$$
and this equation is perfectly well behaved. That is, the growth that $f$ undergoes is balanced by the decay of the multiplier $m$.

	$(2)$ Define the additional multiplier $M(t, k, \eta, l)$ by $M(t=0, k, \eta, l)=1$ and
	\begin{itemize}
		\item if  $k=0$, $M(t, k=0, \eta, l)=1$ for all $t$;
		\item if  $k \neq 0$,
\begin{align}\label{4.5}
\frac{\dot{M}}{M}&= \frac{- \nu^{\frac{1}{3}} }{\left[\nu^{\frac{1}{3}} \big| t-\frac{\eta}{k} \big|\right]^{2}+1}.
\end{align}
\end{itemize}

For the expressions and properties of the multiplier $m$, there is a same situation as in \cite{MR3612004}, so we use it as a known conclusion and ignore the proof process.
\begin{lem}
	   $(1)$ The multiplier $m(t, k, \eta, l)$ can be given by the following exact formula:
	   \begin{itemize}
	   	\item if $k=0$: $m(t, k=0, \eta, l)=m(t=0, k, \eta, l)=1$;
	   	\item if  $k \neq 0$, $\frac{\eta}{k}<-1000  \nu^{-\frac{1}{3}}$: $m(t, k, \eta, l)=1$;
	   	\item if  $k \neq 0$, $-1000  \nu^{-\frac{1}{3}}<\frac{\eta}{k}<0$:
	   	\begin{align}
	   	m(t, k, \eta, l)=\left\{\begin{array}{ll}
	   	\dfrac{k^{2}+\eta^{2}+l^{2}}{k^{2}+(\eta- k t)^{2}+l^{2}}  & \text{ if  } 0< t<\frac{\eta}{k}+1000  \nu^{-\frac{1}{3}}, \\
	   	\dfrac{k^{2}+\eta^{2}+l^{2}}{k^{2}+\left(1000 k  \nu^{-\frac{1}{3}}\right)^{2}+l^{2}} & \text{ if  }  t>\frac{\eta}{k}+1000 \nu^{-\frac{1}{3}};
	   	\end{array}\right.
	   	\end{align}
	   	\item if  $k \neq 0$, $\frac{\eta}{k}>0$:
	   	\begin{align}
	   	m(t, k, \eta, l)=\left\{\begin{array}{lll}
	   	1  & \text{ if }   t<\frac{\eta}{k}, \\
	   	\dfrac{k^{2}+l^{2}}{k^{2}+(\eta- k t)^{2}+l^{2}} & \text{ if } \frac{\eta}{k}<  t<\frac{\eta}{k}+1000  \nu^{-\frac{1}{3}}, \\
	   	\dfrac{k^{2}+l^{2}}{k^{2}+\left(1000 k  \nu^{-\frac{1}{3}}\right)^{2}+l^{2}}  & \text{ if }   t>\frac{\eta}{k}+1000 \nu^{-\frac{1}{3}}.
	   	\end{array}\right.
	   	\end{align}
	   \end{itemize}

		$(2)$ In particular, $m(t, k, \eta, l)$ are bounded above and below, but its lower bound depends on $\nu$
		 \begin{equation}\label{4.11}
		\nu^{\frac{2}{3}} \lesssim m\left(t, k, \eta, l \right) \leqslant 1.
		\end{equation}
		
		$(3)$ $m(t, k, \eta, l)$ and the frequency have the following relationship: \begin{equation}\label{lem4.1.11}
		m\left(t, k, \eta, l \right) \gtrsim \frac{k^2+l^2}{k^2+\left(\eta- kt\right)^2+l^2}.
		\end{equation}
		
\end{lem}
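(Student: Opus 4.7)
The plan is to integrate the defining ODE explicitly, since on the active interval its right-hand side has a clean antiderivative. Set $\phi(t) = k^{2} + (\eta - kt)^{2} + l^{2}$, so $\dot\phi = -2k(\eta - kt)$. Then on the interval $[\eta/k, \eta/k + 1000\nu^{-1/3}]$ the defining equation \eqref{4...4} reads $\dot m/m = -\dot\phi/\phi$, which integrates to $m(t)\phi(t) \equiv \mathrm{const}$; outside the interval, $m$ is frozen. The constant on the active interval is fixed by matching either at the initial time $t=0$ (if $0$ already lies in the interval) or at $t = \eta/k$ (if the interval is entered later). I would split into the four cases in the statement: (i) $k = 0$, where the condition involving $\eta/k$ is vacuous and $m \equiv 1$; (ii) $k\neq 0$ with $\eta/k < -1000\nu^{-1/3}$, where the active interval lies entirely in $\{t<0\}$ and $m\equiv 1$ for $t\ge 0$; (iii) $k\neq 0$ with $-1000\nu^{-1/3} < \eta/k < 0$, where the matching at $t = 0$ yields constant $k^{2}+\eta^{2}+l^{2}$; and (iv) $k\neq 0$ with $\eta/k > 0$, where matching at $t = \eta/k$ yields constant $k^{2}+l^{2}$. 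The post-active regime just freezes $m$ at the endpoint value obtained by substituting $\eta - kt = -1000 k\nu^{-1/3}$ into $\phi$.

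For (2), the upper bound $m \leq 1$ comes for free: on the active interval $k(\eta-kt) \leq 0$ (since $t \geq \eta/k$ implies $\eta - kt$ has sign opposite to $k$), so $\dot m/m \leq 0$ and $m$ is non-increasing from $m(0) = 1$. For the lower bound the worst value is the frozen post-active value whose denominator is $k^{2} + (1000 k\nu^{-1/3})^{2} + l^{2}$. Since $|k| \geq 1$ and $\nu \in (0,1)$, this denominator satisfies
\[
k^{2} + (1000 k\nu^{-1/3})^{2} + l^{2} \lesssim \nu^{-2/3}(k^{2} + l^{2}),
\]
which is seen by splitting $l^{2} \gtrless k^{2}\nu^{-2/3}$: in the first regime $l^{2}$ dominates and bounds itself by $\nu^{-2/3} l^{2}$; in the second the middle term dominates and is $\lesssim \nu^{-2/3} k^{2}$. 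Combining with the numerators ($k^{2}+l^{2}$ or $k^{2}+\eta^{2}+l^{2} \geq k^{2}+l^{2}$) produces $m \gtrsim \nu^{2/3}$.

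Finally for (3), the inequality $m \gtrsim (k^{2}+l^{2})/(k^{2} + (\eta-kt)^{2} + l^{2})$ follows by a case-by-case check against the explicit formulas. In the active regions, $m$ equals either $(k^{2}+\eta^{2}+l^{2})\phi^{-1}$ or $(k^{2}+l^{2})\phi^{-1}$, so the claim is immediate using $\eta^{2} \geq 0$. In the pre-active regions ($k = 0$, or $k \neq 0$ with $\eta/k < -1000\nu^{-1/3}$, or $0 < t < \eta/k$) one has $m = 1$, which exceeds the right-hand side trivially. In the post-active region $t > \eta/k + 1000\nu^{-1/3}$ one uses $(\eta-kt)^{2} = k^{2}(t - \eta/k)^{2} > (1000 k\nu^{-1/3})^{2}$ to conclude that the current denominator exceeds the frozen denominator, so the frozen value of $m$ still dominates $(k^{2}+l^{2})/(k^{2}+(\eta-kt)^{2}+l^{2})$.

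The main obstacle is purely bookkeeping: keeping the four time regions straight and verifying continuity of $m$ at the interface times $t = 0$, $t = \eta/k$, and $t = \eta/k + 1000\nu^{-1/3}$ across the cases. There is no analytic subtlety, as the ODE is separable and the reference factor $\phi$ is monotone on each active interval; the entire lemma is a careful case analysis, which is why the paper imports it from \cite{MR3612004} without repeating the computation.
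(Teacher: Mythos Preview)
Your proposal is correct. The paper does not give its own proof of this lemma; it explicitly cites \cite{MR3612004} and omits the computation, which you already note at the end. Your sketch faithfully reconstructs the standard argument: the separable ODE $\dot m/m = -\dot\phi/\phi$ on the active interval, matching at $t=0$ or $t=\eta/k$ according to whether the origin lies inside the interval, monotonicity for the upper bound, and the elementary estimate $k^{2}+(1000k\nu^{-1/3})^{2}+l^{2}\lesssim \nu^{-2/3}(k^{2}+l^{2})$ for the lower bound. One minor simplification: the case split $l^{2}\gtrless k^{2}\nu^{-2/3}$ in part~(2) is unnecessary, since $k^{2}+(1000k\nu^{-1/3})^{2}+l^{2}\leq (1+10^{6}\nu^{-2/3})(k^{2}+l^{2})\lesssim \nu^{-2/3}(k^{2}+l^{2})$ holds directly.
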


The following lemma embodies some properties of the ghost multiplier $M(t, k, \eta, l)$.
\begin{lem}[\cite{Liss2020,MR3612004}]\label{lem4.2}
	$(1)$ The multiplier $M$ have positive upper and lower bounds:
	\begin{equation}\label{4.13}
	0<c<M(t, k, \eta, l) \leqslant 1
	\end{equation}
	for a universal constant $c$ that does not depend on $\nu$ and frequency. 
	
	$(2)$ For $k \neq 0$, we have
	\begin{equation}\label{4.15}
		1 \lesssim \nu^{-\frac{1}{6}} \sqrt{-\dot{M} M}\left(k, \eta, l\right)+\nu^{\frac{1}{3}}\big|k, \eta-  kt, l \big|.
	\end{equation}

\end{lem}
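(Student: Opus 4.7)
For part (1), the upper bound $M \le 1$ is immediate: the defining ODE gives $\dot M / M \le 0$, so $M$ is non-increasing in $t$ and starts from $M(0,k,\eta,l)=1$. For the uniform lower bound the plan is to integrate the ODE explicitly. When $k\neq 0$, I would write
\[
\log M(t,k,\eta,l) \;=\; -\int_{0}^{t} \frac{\nu^{1/3}}{\bigl[\nu^{1/3}|s-\eta/k|\bigr]^{2}+1}\,ds,
\]
and then perform the substitution $u=\nu^{1/3}(s-\eta/k)$, $du=\nu^{1/3}\,ds$, which collapses the $\nu$'s and reduces the integrand to $1/(1+u^{2})$. The total integral over $\mathbb{R}$ is $\pi$, hence $\log M(t,k,\eta,l)\ge -\pi$ uniformly in $(t,k,\eta,l)$. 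This yields the claimed universal constant $c=e^{-\pi}$, which is independent of $\nu$ and of the frequency $(k,\eta,l)$; the case $k=0$ is trivial since $M\equiv 1$ there.

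For part (2), I would compute $-\dot M\, M$ directly from the ODE:
\[
-\dot M\, M \;=\; M^{2}\,\frac{\nu^{1/3}}{\bigl[\nu^{1/3}|t-\eta/k|\bigr]^{2}+1},
\]
so that
\[
\nu^{-1/6}\sqrt{-\dot M\, M}\;=\;\frac{M(t,k,\eta,l)}{\sqrt{\bigl[\nu^{1/3}|t-\eta/k|\bigr]^{2}+1}}.
\]
Using the uniform lower bound on $M$ from part (1), this expression is $\gtrsim \bigl(1+[\nu^{1/3}|t-\eta/k|]^{2}\bigr)^{-1/2}$. I would then split into two cases. If $\nu^{1/3}|t-\eta/k|\le 1$, then the first term on the right-hand side of \eqref{4.15} alone is $\gtrsim 1$. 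If instead $\nu^{1/3}|t-\eta/k|>1$, the crucial observation is that $k\in\mathbb{Z}\setminus\{0\}$ and so $|k|\ge 1$; multiplying the inequality through by $|k|$ gives $\nu^{1/3}|\eta-kt|>|k|\ge 1$, whence $\nu^{1/3}|k,\eta-kt,l|\gtrsim 1$, and the second term dominates.

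The whole argument is quite short because the multiplier $M$ is engineered precisely so that its logarithmic derivative has a Lorentzian profile whose total variation is $\pi$ after rescaling by $\nu^{1/3}$, and so that $\sqrt{-\dot M\, M}$ carries exactly the correct $\nu$-weight to detect the critical time interval $|t-\eta/k|\lesssim \nu^{-1/3}$ on which the stretching term in \eqref{4...1} overwhelms the dissipation. The only mild subtlety — and the reason the integrality of $k$ enters — is in the complementary case $\nu^{1/3}|t-\eta/k|>1$ of part (2), where the bound $|k|\ge 1$ is needed to convert a statement about $|t-\eta/k|$ into one about $|\eta-kt|$. I do not anticipate any real obstacle beyond the substitution in part (1) and the case split in part (2).
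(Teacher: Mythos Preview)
Your proof is correct. Note, however, that the paper does not actually prove Lemma~\ref{lem4.2}: it is stated with a citation to \cite{Liss2020,MR3612004} and used as a black box, so there is no in-paper argument to compare against. Your argument---explicit integration of the Lorentzian profile to get $M\ge e^{-\pi}$, and the dichotomy on $\nu^{1/3}|t-\eta/k|$ together with $|k|\ge 1$ for \eqref{4.15}---is exactly the standard proof one finds in those references.
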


Using lemma \ref{lem4.2}, we directly give the following inference.
\begin{cor}\label{cor4.1}
	For any $g$ and $s \geqslant 0$, the following inequality holds
	\begin{equation}\label{4.17}
    \left\|  g_{\neq}  \right\|_{L^2 H^{s}} \lesssim \nu^{-\frac{1}{6}} \left( \left\|  \sqrt{-\dot{M} M} g_{\neq} \right\|_{L^2 H^{s}}+ \nu^{\frac{1}{2}} \left\|  \nabla_{L} g_{\neq} \right\|_{L^2 H^{s}}\right).
	\end{equation}
\end{cor}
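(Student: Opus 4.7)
The plan is to deduce this inequality directly from the pointwise Fourier-side bound \eqref{4.15} of Lemma \ref{lem4.2}, via a straightforward application of Plancherel's theorem. Since $g_{\neq}$ is the projection onto nonzero $x$-frequencies, its Fourier transform is supported on $\{k \in \mathbb{Z}\setminus\{0\}\}$, which is precisely the regime in which \eqref{4.15} holds. The whole estimate is therefore pointwise in frequency and time, followed by integration.

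Concretely, I would multiply the pointwise inequality
\[
1 \lesssim \nu^{-\frac{1}{6}} \sqrt{-\dot{M} M}(k, \eta, l) + \nu^{\frac{1}{3}} \bigl| k, \eta - kt, l \bigr|
\]
by $\langle k, \eta, l\rangle^{s}\,\bigl|\widehat{g_{\neq}}(t, k, \eta, l)\bigr|$ and take the $L^{2}$ norm in $(k, \eta, l)$ restricted to $k \neq 0$. Using Plancherel and the identification $|\widehat{\nabla_{L}}| = |k, \eta - kt, l|$ from \eqref{HSX444}, this yields the pointwise-in-time bound
\[
\|g_{\neq}(t)\|_{H^{s}} \lesssim \nu^{-\frac{1}{6}} \bigl\|\sqrt{-\dot{M} M}\, g_{\neq}(t)\bigr\|_{H^{s}} + \nu^{\frac{1}{3}} \|\nabla_{L} g_{\neq}(t)\|_{H^{s}}.
\]
Writing $\nu^{\frac{1}{3}} = \nu^{-\frac{1}{6}} \cdot \nu^{\frac{1}{2}}$ factors out the common $\nu^{-1/6}$. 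Squaring, using $(a+b)^{2} \lesssim a^{2}+b^{2}$, integrating in $t$, and taking a square root then produces \eqref{4.17}.

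There is no real obstacle here: the entire analytic content was already absorbed in proving the symbol inequality \eqref{4.15}. The only small conceptual point worth noting is the splitting $\nu^{1/3} = \nu^{-1/6} \nu^{1/2}$, which pairs the $\nu^{1/2}$ with $\nabla_{L}$ (matching the natural viscous dissipation weighting in the energy estimates), while leaving the overall prefactor $\nu^{-1/6}$ that reflects the enhanced-dissipation time scale $\nu^{-1/3}$ governing the non-zero modes throughout the paper.
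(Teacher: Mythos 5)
Your argument is correct and is essentially the standard (and only natural) derivation the paper has in mind: the corollary is obtained by multiplying the pointwise Fourier-side bound \eqref{4.15} by $\langle k,\eta,l\rangle^{s}|\widehat{g_{\neq}}(t,k,\eta,l)|$, applying Plancherel together with the identification $|\widehat{\nabla_{L}}|=|k,\eta-kt,l|$, factoring $\nu^{1/3}=\nu^{-1/6}\nu^{1/2}$, and then squaring and integrating in time. The paper simply states that the corollary follows ``directly'' from Lemma~\ref{lem4.2}, so your write-up supplies exactly the omitted details.
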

 This corollary will be used frequently in the estimates of nonlinear stability.

In the following, we will introduce the ideas of the  proof of nonlinear stability of  (\ref{1....3}). To begin with, applying the energy methods in \cite{MB02} and \cite{LO97}, we establish the existence of a local solution, which is subsequently extended to a global solution.
\begin{lem}[Local existence]\label{lem4.3}
	Under the condition of Theorem \ref{1.2.}, let $u_{in}$ be divergence-free and satisfy (\ref{1..11}). Then there exists a $T^{\ast}>0$ independent of $\nu$ such that there is a unique strong solution $u(t)\in C([0,T^{\ast}];H^{\sigma})$, which satisfies the initial data and is real analytic for $t\in (0,T^{\ast}]$. Moreover, there exists a maximal existence time  $T_{0}$  with  $T^{\ast}<T_{0} \leqslant \infty$  such that the solution  u(t)  remains unique and real analytic on  $\left(0, T_{0}\right)$  and, if for some  $\tau \leqslant T_{0}$  we have $ \lim \sup _{t \rightarrow \tau}\|u(t)\|_{H^{\sigma-2}}<\infty $, then  $\tau<T_{0}$.
\end{lem}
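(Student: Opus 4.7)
The plan is to prove Lemma \ref{lem4.3} by adapting the classical local well-posedness theory for the incompressible Navier--Stokes equations (as in \cite{MB02,LO97}) to the rotating setting. Since $\sigma>\frac{9}{2}>\frac{3}{2}$, $H^\sigma(\mathbb{T}\times\mathbb{R}\times\mathbb{T})$ is a Banach algebra, so the standard energy method applies. The Coriolis term $\beta e_3\times V$ is a bounded skew-symmetric zeroth-order operator satisfying $\langle e_3\times V,V\rangle_{L^2}=0$, and more generally it is harmless in $H^\sigma$ energy estimates. The linear pressure $\nabla p^{L}$ determined by \eqref{1.5} is a zeroth-order Fourier multiplier bounded on $H^\sigma$ by Calder\'on--Zygmund theory, so it too is a lower-order perturbation for the purposes of local existence.

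First I would construct approximate solutions $u^{(n)}$ either by Friedrichs mollification of the nonlinearity or by a Galerkin projection. Using the algebra property \eqref{2.3}, a commutator estimate of Kato--Ponce type for $\langle u\cdot\nabla v,v\rangle_{H^\sigma}$, and the $L^2$-antisymmetry of the Coriolis force, I arrive at a closed energy inequality of the form
\be
\frac{d}{dt}\|u^{(n)}\|_{H^\sigma}^2+\nu\|\nabla u^{(n)}\|_{H^\sigma}^2\lesssim\bigl(1+\|u^{(n)}\|_{H^\sigma}\bigr)\|u^{(n)}\|_{H^\sigma}^2.
\ee
A Gronwall-type argument yields a uniform bound on a time interval $[0,T^\ast]$ depending only on $\|u_{\mathrm{in}}\|_{H^\sigma}$, with no dependence on $\nu$ since the viscous term on the left-hand side is beneficial. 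Standard Aubin--Lions compactness lets me pass to the limit $n\to\infty$ to produce a strong solution $u\in C([0,T^\ast];H^\sigma)$ satisfying the initial data. Uniqueness follows by an $L^2$ energy estimate on the difference of two solutions, for which the Coriolis and linear-pressure terms are again inoffensive.

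Next I would establish analyticity on $(0,T^\ast]$ via the Foias--Temam method, using the analytic Gevrey norm $\|e^{\tau(t)|D|}u\|_{H^\sigma}$ with a small increasing radius $\tau(t)$. Differentiating in time produces a dissipative term $\nu\|\nabla e^{\tau|D|}u\|_{H^\sigma}^2$ that competes with the growth $\dot{\tau}\,\| |D|^{1/2}e^{\tau|D|}u\|_{H^\sigma}^2$ generated by moving the analytic weight; choosing $\dot\tau\sim \nu$ makes the dissipation dominate. The nonlinear term is controlled because the pointwise Fourier-side inequality $e^{\tau|\xi|}\leqslant e^{\tau|\xi-\eta|}e^{\tau|\eta|}$ preserves an algebra bound. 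The shear transport $y\partial_x$, the Coriolis force and the linear pressure are all manifestly compatible with this framework, yielding analytic regularity on $[\epsilon,T^\ast]$ for every $\epsilon>0$.

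Finally, define $T_0=\sup\{T>0:u\in C([0,T];H^\sigma)\text{ and is real analytic on }(0,T]\}$. For the continuation criterion I argue by contradiction: if $\limsup_{t\to\tau}\|u(t)\|_{H^{\sigma-2}}<\infty$ for some $\tau\leqslant T_0$, then parabolic smoothing combined with the propagated analytic regularity (which controls all higher Sobolev norms once a positive analytic radius is available) permits a bootstrap upgrading this bound to a uniform $H^\sigma$ bound on $[0,\tau)$. Restarting the local existence construction from $t_0$ arbitrarily close to $\tau$ produces an extension past $\tau$, forcing $\tau<T_0$. I expect the main obstacle to be propagating a \emph{quantitative} analytic radius over the whole interval $(0,T_0)$ without losing control from the unbounded shear-induced drift $y\partial_x$ on the slab $\mathbb{T}\times\mathbb{R}\times\mathbb{T}$; working in the moving frame \eqref{2.5} where the drift is absorbed into $\Delta_L$ should be the cleanest way to close this step.
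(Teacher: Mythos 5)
Your proposal is correct and follows essentially the same route as the paper, which itself does not give details but simply cites the energy methods of Majda--Bertozzi \cite{MB02} and the Gevrey/analyticity machinery of Levermore--Oliver \cite{LO97} (and defers to the analogous Lemma 2.6 of \cite{MR3612004}). You have filled in the standard details — mollified/Galerkin approximation, $H^\sigma$ energy estimate exploiting the algebra property and skew-symmetry, Foias--Temam Gevrey radius for analyticity, and a continuation criterion via parabolic smoothing — and you correctly identify the unbounded drift $y\partial_x$ as the point needing care and resolve it by passing to the moving frame \eqref{2.5}, which is indeed the device the paper relies on throughout.
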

Drawing upon the analysis presented in \cite{MR3612004}, for sufficiently small values of $\varepsilon \nu^{-2}$, it is feasible to obtain reliable estimates for the local solutions $q^{i}$ and $u^{i}$ up to $t=2$ without encountering any significant challenges.

\begin{lem}\label{lem4.4}
	For  $ \varepsilon \nu^{-2}$  sufficiently small and constants  $C_{0}, C_{1}$  sufficiently large (chosen below), the following estimates hold for  $t \in[0,2] $:
	
	$(1)$ the bounds on $q^i,\;i=1,\,2,\,3,$
	\begin{align}
		\left\|q^{1}(t)\right\|_{H^{\sigma-2}}& \leqslant 2\varepsilon,\nonumber\\
		\left\|q^{2}(t)\right\|_{H^{\sigma-2}}& \leqslant 2 C_{1} \varepsilon, \nonumber\\
		\left\|q^{3}(t)\right\|_{H^{\sigma-2}}& \leqslant 2 C_{0} \varepsilon.\nonumber
	\end{align}
	
	$(2)$ the bounds on $u^i,\;i=1,\,2,\,3,$
	\begin{align}
		\left\|u^{1}(t)\right\|_{H^{\sigma}}&  \leqslant 2 \varepsilon,\nonumber  \\ \left\|u^{2}(t)\right\|_{H^{\sigma}}&  \leqslant 2 C_{1} \varepsilon,
		\nonumber  \\
		\left\|u^{3}(t)\right\|_{H^{\sigma}}&  \leqslant 2 C_{0} \varepsilon.\nonumber
	\end{align}
\end{lem}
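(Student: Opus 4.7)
This is a short-time propagation of regularity on the bounded interval $[0,2]$, on which none of the long-time mechanisms (enhanced dissipation, inviscid damping, lift-up) has room to act; in particular one needs neither the Fourier multipliers $m,M$ nor the coordinate change \eqref{2.5}. I would prove it by a direct bootstrap on the original system \eqref{1....3} for $u$ together with the derived system \eqref{2.4} for $q$. Lemma \ref{lem4.3} produces a real-analytic solution on $(0,T^{\ast}]$ with $\|u(0)\|_{H^{\sigma}}\leqslant\varepsilon$ and $\|q(0)\|_{H^{\sigma-2}}\lesssim\varepsilon$. Fix $C_{0},C_{1}\geqslant 1$ and let $T_{B}$ be the supremum of $t\in[0,\min(2,T_{0}))$ on which all six asserted bounds hold with their constants doubled. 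By continuity $T_{B}>0$, and the plan is to improve each bound strictly on $[0,T_{B}]$; this forces $T_{B}=\min(2,T_{0})$, and the blow-up criterion of Lemma \ref{lem4.3} then gives $T_{0}>2$.

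Apply $\langle\nabla\rangle^{\sigma}$ to each component of \eqref{1....3} and pair with $\langle\nabla\rangle^{\sigma}u^{i}$, and analogously $\langle\nabla\rangle^{\sigma-2}$ to \eqref{2.4} paired with $\langle\nabla\rangle^{\sigma-2}q^{i}$. The shear transport $y\partial_{x}$ is $L^{2}$-skew and produces only a commutator term bounded by $\|u\|_{H^{\sigma}}^{2}$ (resp.\ $\|q\|_{H^{\sigma-2}}^{2}$); viscosity gives a nonnegative dissipation. The Coriolis coupling, the linear-pressure terms of the form $\nabla(-\Delta)^{-1}\partial_{j}u^{m}$, and the linear stretching and lift-up terms in \eqref{2.4} are all bounded by $\|u^{j}\|_{H^{\sigma}}\|u^{i}\|_{H^{\sigma}}$ or $\|q^{j}\|_{H^{\sigma-2}}\|q^{i}\|_{H^{\sigma-2}}$, using $L^{2}$-boundedness of the Riesz-type multipliers $\partial_{i}\partial_{j}(-\Delta)^{-1}$ together with the elementary bound $\|\partial_{ij}u^{m}\|_{H^{\sigma-2}}\lesssim\|u^{m}\|_{H^{\sigma}}$. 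Every nonlinear contribution ($\mathcal{TQ}$, $\mathcal{NLS}1$, $\mathcal{NLS}2$, $\mathcal{NLP}$) is a product of two first derivatives of $u$, and by the Sobolev algebra \eqref{2.3}, valid since $\sigma-2>3/2$, reduces to an expression of the form $\|u\|_{H^{\sigma}}\|q\|_{H^{\sigma-2}}^{2}$ or $\|u\|_{H^{\sigma}}^{2}\|q\|_{H^{\sigma-2}}$, cubic in $\varepsilon$ under the bootstrap.

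Combining these estimates yields a differential inequality of the schematic form $\frac{d}{dt}\mathcal{E}(t)\leqslant C\,\mathcal{E}(t)+C\,\mathcal{E}(t)^{3/2}$ for a suitably weighted energy $\mathcal{E}(t)$ on the six quantities, with $\mathcal{E}(0)\lesssim\varepsilon^{2}$. Gronwall on $[0,2]$ gives $\mathcal{E}(t)\leqslant e^{2C}\mathcal{E}(0)(1+O(\varepsilon\nu^{-2}))$, which, upon taking $C_{0},C_{1}$ larger than the universal constants that appear in the individual component estimates and $\varepsilon\nu^{-2}$ sufficiently small, closes all six bounds strictly. The principal obstacle is the sharp coefficient $2$ in the $u^{1}$ and $q^{1}$ bounds: these components carry no Coriolis source and couple to $u^{2},q^{2}$ only through the pressure, so naively inserting the bootstrap bound $\|u^{2}\|_{H^{\sigma}}\leqslant 4C_{1}\varepsilon$ into the $u^{1}$ estimate would inflate it by a factor $C_{1}$. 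One instead tracks $(u^{1},u^{2})$ and $(q^{1},q^{2})$ jointly, exploiting that on the short interval $[0,2]$ the pressure--lift-up feedback produces only an $O(\varepsilon t^{2})$ correction to $\|u^{1}\|_{H^{\sigma}}$ rather than an $O(C_{1}\varepsilon)$ one, so the tight constant $2$ is recovered by absorbing everything into the smallness of $\varepsilon\nu^{-2}$.
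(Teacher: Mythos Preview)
The paper does not prove Lemma~\ref{lem4.4} itself but refers to Lemma~2.7 of \cite{MR3612004}, and your approach---a short-time bootstrap with direct $H^{\sigma}$ and $H^{\sigma-2}$ energy estimates, commutator bounds for the shear transport $y\partial_x$, Riesz-multiplier bounds for the linear pressure, the algebra property \eqref{2.3} for the nonlinearities, and Gronwall on $[0,2]$---is precisely the standard route that reference follows. For the components $u^{2},u^{3},q^{2},q^{3}$ your argument works without qualification: all six quantities start at size $\varepsilon$, the linear couplings carry only universal constants, so after Gronwall each is bounded by $K\varepsilon$ for a universal $K$, and taking $C_{0},C_{1}\geqslant K$ gives the asserted bounds.

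The difficulty you flag in the last paragraph is real, but your proposed resolution is not correct. The $q^{1}$ equation in \eqref{2.4} contains the linear term $-\partial_{xx}u^{2}=\partial_{xx}\Delta^{-1}q^{2}$, so $q^{1}$ is coupled to $q^{2}$ at leading linear order. Your claim that this feedback contributes only $O(\varepsilon t^{2})$ to $\|u^{1}\|_{H^{\sigma}}$ is wrong: since $u^{2}(0)$ is already of size $\varepsilon$, the pressure term feeds in an $O(\varepsilon)$ source from $t=0$, producing an $O(\varepsilon t)$ correction, which at $t=2$ is of the same order as the initial data. A joint energy estimate therefore yields only $\|q^{1}(t)\|_{H^{\sigma-2}}\leqslant K\varepsilon$ with a universal $K$, not $K=2$. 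There are two honest ways out. Either accept $\|q^{1}\|\leqslant K\varepsilon$ with $K$ universal---this is all the subsequent bootstrap in Proposition~\ref{pro4.1} actually requires, provided the constant $8$ in \eqref{4..2}--\eqref{4..1} is enlarged accordingly, at no cost. Or use the symmetric variables $K_{1},K_{2}$ of \eqref{3.4} already on $[0,2]$: by \eqref{3.6} the quantity $|K_{1}|^{2}+|K_{2}|^{2}$ is nonincreasing at the linear level, and on $[0,2]$ the weight ratio $w(t)/w(0)$ is uniformly bounded, so $\|q^{1}_{\neq}\|$ grows only by a fixed universal factor; the zero mode $q^{1}_{0}$ has no linear source at all since $\partial_{x}$ annihilates it. Either fix is routine, but the $O(\varepsilon t^{2})$ mechanism you describe does not exist.
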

The proof of Lemma \ref{lem4.4} can be referred to Lemma 2.7 in subsection 2.7 in \cite{MR3612004},  where we omit the proof details. The local existence result shows that we only need to concern with the times  $T>1$ in following subsection.

It is worth noting that here we only introduce Fourier multipliers $m$ and $M$ such as in \cite{MR3612004}. Due to the interaction of perturbations near the Couette flow   and  the rotation effect of fluid, we are unable to construct suitable multipliers to handle some linear terms such as $ Q^{1}- \partial_{YY}^{L}U^{1}$ in $Q^{2}$ equation in \eqref{2.10}. Fortunately, when we consider the linear stability problem in section \ref{section3}, $Q^{1}$ and $Q^{2}$ equation are reduced to the linear symmetric structures by using introduced variable substitution \eqref{3.4}. This tells us that we can avoid dealing with tricky linear terms appeared in $Q^{1}$ and $Q^{2}$ equation. Inspired by these ideas,  the coupling between $U^{1}$ and $U^{2}$ is effectively addressed at the linearized level using symmetric variables
\begin{equation}\label{5.1}
	\check{K}^1= -|\nabla_{X, Z}| |\nabla_{L}| U^1, \quad \check{K}^2= -|\partial_{X}| |\nabla_{L}| U^2.
\end{equation}
These new variables play a crucial role in the analysis of the linearized system, as they admit a favorable energy  estimate (see the proof of the following Proposition \ref{pro4.1}).
Under the change of variable \eqref{5.1}, we obtain from \eqref{2.11} that
\begin{align}
&\partial_{t} \check{K}^1-\nu \Delta_{L} \check{K}^1+  |\partial_{X}| |\nabla_{X, Z}| |\nabla_{L}|^{-2} \check{K}^2\nonumber\\
&\quad  =|\nabla_{X, Z}| |\nabla_{L}|\left( U \cdot \nabla_{L} U^{1} \right)-|\nabla_{X, Z}| |\nabla_{L}|\partial_{X} |\nabla_{L}|^{-2}\left(\partial_{i}^{L} U^{j} \partial_{j}^{L} U^{i}\right),\label{HSX88} \\
&\partial_{t} \check{K}^2-\nu \Delta_{L} \check{K}^2-  |\partial_{X}| |\nabla_{X, Z}| |\nabla_{L}|^{-2} \check{K}^1 \nonumber\\
&\quad  =|\partial_{X}| |\nabla_{L}|\left( U \cdot \nabla_{L} U^{2} \right)-|\partial_{X}| |\nabla_{L}|\partial_{Y}^{L} |\nabla_{L}|^{-2}\left(\partial_{i}^{L} U^{j} \partial_{j}^{L} U^{i}\right) \label{HSX99},\\
&\partial_{t}U^{3}-\nu\Delta_{L}U^{3}+\partial_{Z}(-\Delta_{L})^{-1}\partial_{X}U^{2}+\partial_{Z}(-\Delta_{L})^{-1}\partial_{Y}^{L}U^{1}\nonumber\\
&\quad= -U\cdot\nabla_{L}U^{3}-\partial_{Z}(-\Delta_{L})^{-1}\left( \partial_{i}^{L}U_{j}\partial_{j}^{L}U_{i}\right)\label{HSX100}.
\end{align}
In particular, with \eqref{5.1}, $U^{1}$ and $U^{2}$ can be recovered from $\check{K}^1$ and $\check{K}^2$ as
\begin{equation} 
	\label{HSX110}
 U^1=-|\nabla_{X, Z}|^{-1} |\nabla_{L}|^{-1}\check{K}^1, \quad  U^2=-|\partial_{X}|^{-1}  |\nabla_{L}|^{-1}\check{K}^2.
\end{equation}
With the help of analysis above, we will show the following bootstrap result. In fact, Theorem \ref{1.2.} is mainly  proved by using the following bootstrap argument.  For convenient, we set $N \overset{\Delta}{=}\sigma-2>\frac{5}{2}$. 
\subsection{Bootstrap argument}\label{4.2}

\begin{pro}\label{pro4.1}
	Fix $C_{0}$ and $C_{1}$ large constants and let $T$ be the largest time $T\geqslant 1$ such that the following estimates hold on $[1,T]$:
	
	 $(1)$ the bounds on $\check{K}_{\neq}^{1, 2}$ and $Q_{\neq}^3$
	 \begin{align}
	 \left\|  M \check{K}_{\neq}^1 \right\|_{L^{\infty} H^N}+ \nu^{\frac{1}{2}} \left\|  M \nabla_{L} \check{K}_{\neq}^1 \right\|_{L^{2} H^N} + \left\| \sqrt{-\dot{M}M}  \check{K}_{\neq}^1  \right\|_{L^{2} H^N} & \leqslant 8\varepsilon,   \label{4..2}\\
	 \left\|  M \check{K}_{\neq}^2 \right\|_{L^{\infty} H^N}+ \nu^{\frac{1}{2}} \left\|  M \nabla_{L} \check{K}_{\neq}^2 \right\|_{L^{2} H^N}+ \left\| \sqrt{-\dot{M}M}  \check{K}_{\neq}^2  \right\|_{L^{2} H^N} & \leqslant 8\varepsilon.     \label{4..5}\\
	 \left\| m M Q_{\neq}^3 \right\|_{L^{\infty} H^N}+ \nu^{\frac{1}{2}} \left\| m M \nabla_{L} Q_{\neq}^3 \right\|_{L^{2} H^N}+ \left\| \sqrt{-\dot{M}M} m Q_{\neq}^3  \right\|_{L^{2} H^N} & \leqslant 8C_0 \varepsilon  \nu^{-\frac{1}{3}}.  \label{4..8}
	 \end{align}
	 
	 $(2)$ the bounds on $Q_{0}$
\begin{align}
\left\| Q_0^1 \right\|_{L^{\infty} H^N}+ \nu^{\frac{1}{2}} \left\| \nabla Q_0^1 \right\|_{L^{2} H^N} & \leqslant 8\varepsilon,    \label{4..1}\\
\left\| Q_0^2 \right\|_{L^{\infty} H^N}+ \nu^{\frac{1}{2}} \left\| \nabla Q_0^2 \right\|_{L^{2} H^N} & \leqslant 8C_1 \varepsilon  \nu^{-1},     \label{4..4}\\
\left\| Q_0^3 \right\|_{L^{\infty} H^N}+ \nu^{\frac{1}{2}} \left\| \nabla Q_0^3 \right\|_{L^{2} H^N} & \leqslant 8C_0 \varepsilon  \nu^{-1},     \label{4..7}
\end{align}

	$(3)$ the bounds on $U_{0}$  
	 \begin{align}
	 \left\| U_0^1 \right\|_{L^{\infty} H^{N-1}}+ \nu^{\frac{1}{2}} \left\| \nabla U_0^1 \right\|_{L^{2} H^{N-1}} & \leqslant 8\varepsilon,     \label{4..11}\\
	 \left\| U_0^2 \right\|_{L^{\infty} H^{N-1}}+ \nu^{\frac{1}{2}} \left\| U_0^2 \right\|_{L^{2} H^{N-1}}+ \nu^{\frac{1}{2}} \left\| \nabla U_0^2 \right\|_{L^{2} H^{N-1}} & \leqslant 8C_1 \varepsilon  \nu^{-1},     \label{4..12}\\
	 \left\| U_0^3 \right\|_{L^{\infty} H^{N-1}}+ \nu^{\frac{1}{2}} \left\| \nabla U_0^3 \right\|_{L^{2} H^{N-1}} & \leqslant 8C_0 \varepsilon  \nu^{-1}.     \label{4..13}
	 \end{align}
Assume that $\left\| u_{\mathrm{in}} \right\|_{H^{N+2}} \leqslant \varepsilon \leqslant \delta  \nu^{2}$, $\nu \in (0, 1)$ and that for some $T>1$, the  estimates \eqref{4..2}--\eqref{4..13} hold on $[0,T]$. Then for $\delta$ sufficiently small depending only on $\sigma$,  $C_0$ and  $C_1$, but not on $T$, there same estimates  hold with all the occurrences of 8 on the right-hand side replaced by 4.
\end{pro}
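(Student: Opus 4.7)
The approach is the standard bootstrap: assuming \eqref{4..2}--\eqref{4..13} hold on $[1,T]$ with constant $8$, perform energy estimates on each quantity, move all linear terms to the left-hand side to produce manifestly dissipative/cancellation structures, and show that the nonlinear contributions are bounded by $C\varepsilon^{2}\nu^{-\alpha}$ for an exponent $\alpha$ that is strictly smaller than the threshold exponent appearing on the right-hand side of the corresponding estimate. Since $\varepsilon\le\delta\nu^{2}$ with $\delta$ small, these contributions can be absorbed into a constant of size $\le 1$ times $\varepsilon$ (resp.\ $\varepsilon\nu^{-1}$, $\varepsilon\nu^{-1/3}$), strictly improving the constant $8$ to $4$.

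\textbf{Step 1: non-zero modes of $\check{K}^{1,2}$.} I would apply $M\langle\nabla\rangle^{N}$ to \eqref{HSX88} and \eqref{HSX99}, pair with $M\langle\nabla\rangle^{N}\check{K}^{1}_{\neq}$ and $M\langle\nabla\rangle^{N}\check{K}^{2}_{\neq}$ respectively, and \emph{add} the two identities. The crucial point is that the linear coupling terms $|\partial_{X}||\nabla_{X,Z}||\nabla_{L}|^{-2}$ appearing with opposite signs in the two equations cancel under this symmetrized pairing, exactly as in the linearized analysis of section \ref{section3}. The dissipation yields $\nu\|M\nabla_{L}\check{K}^{1,2}_{\neq}\|_{L^{2}H^{N}}^{2}$, and the multiplier $M$ produces the extra good term $\|\sqrt{-\dot MM}\,\check{K}^{1,2}_{\neq}\|_{L^{2}H^{N}}^{2}$ via \eqref{4.15}. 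The nonlinear terms on the right-hand sides of \eqref{HSX88}--\eqref{HSX99} are estimated by Littlewood-Paley paraproduct decomposition, splitting each factor into $0$ and $\neq$ parts; one uses the recovery formulas \eqref{HSX110} together with Corollary \ref{cor4.1} to pay a $\nu^{-1/6}$ in exchange for absorbing derivatives into the good terms $\sqrt{-\dot MM}\,\check{K}^{1,2}_{\neq}$ and $\nu^{1/2}M\nabla_{L}\check{K}^{1,2}_{\neq}$. The worst interaction is $U_{0}^{2,3}\cdot\nabla_{L}U_{\neq}^{1,2}$, since $U_{0}^{2,3}$ only has the weak bound $\varepsilon\nu^{-1}$; however this contribution carries at least one factor that can be controlled using the inviscid damping $\|U_{\neq}^{1,2}\|\lesssim\langle t\rangle^{-1}$ hidden in the weights, producing an overall size $\varepsilon^{2}\nu^{-1-1/3}\ll\varepsilon$ once $\varepsilon\le\delta\nu^{2}$.

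\textbf{Step 2: non-zero mode of $Q^{3}$.} Apply $mM\langle\nabla\rangle^{N}$ to the equation for $Q^{3}_{\neq}$ obtained from \eqref{2.10}, pair with $mMQ^{3}_{\neq}$. The linear stretching term $2\partial_{XY}^{L}U^{3}=2\partial_{XY}^{L}\Delta_{L}^{-1}Q^{3}$ is exactly the term designed to be tamed by $m$, so that $\dot m/m$ cancels it on the range $[\eta/k,\eta/k+1000\nu^{-1/3}]$; outside this range the dissipation already dominates by \eqref{4...3}. The remaining linear terms $\mathcal{LP}$ and $\mathcal{LU}3$ are estimated using the inviscid damping of $U^{2}_{\neq}$ and $U^{1}_{\neq}$ from step 1, paying the $\nu^{-1/3}$ loss from \eqref{4.11}. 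Nonlinear terms are handled as in step 1. The threshold $\varepsilon\nu^{-1/3}$ on the right-hand side of \eqref{4..8} is exactly what comes out of the lower bound $m\gtrsim\nu^{2/3}$.

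\textbf{Step 3: zero modes.} For $Q_{0}^{1}$ one tests the zero-frequency projection of \eqref{2.10} against $\langle\nabla\rangle^{N}Q_{0}^{1}$; the linear lift-up terms vanish on the zero mode of the first equation, so only nonlinear terms remain, and these are quadratic in quantities already bounded in steps 1-2, giving a bound $\lesssim\varepsilon^{2}\nu^{-?}$ that is $\ll\varepsilon$ once $\varepsilon\le\delta\nu^{2}$. For $Q_{0}^{2,3}$ one absorbs the lift-up contribution $Q_{0}^{1}$ (appearing in $Q_{0}^{2}$'s equation, and indirectly in $Q_{0}^{3}$'s via $U_{0}^{1}$) using Gr\"{o}nwall on the $\nu^{-1}$-scale; this is the mechanism producing the $\varepsilon\nu^{-1}$ bound. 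The $U_{0}^{i}$ estimates \eqref{4..11}--\eqref{4..13} then follow from $U_{0}^{i}=\Delta_{y,z}^{-1}Q_{0}^{i}$ and standard elliptic regularity (with the additional $\nu^{1/2}\|U_{0}^{2}\|_{L^{2}H^{N-1}}$ term in \eqref{4..12} coming from the extra Fisher-type good term in $Q_{0}^{2}$'s energy estimate).

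\textbf{Main obstacle.} The genuine difficulty is the combination of \emph{two} simultaneous lift-up growths in $U_{0}^{2}$ and $U_{0}^{3}$, both of size $\varepsilon\nu^{-1}$. These enter essentially every nonlinear interaction at non-zero frequency, and the naive bound gives $\varepsilon^{2}\nu^{-2}$, precisely at the critical threshold $\varepsilon\le\delta\nu^{2}$. Hence every nonlinear estimate must gain at least one extra small factor, either from enhanced dissipation ($\nu^{1/2}\nabla_{L}$-terms), from inviscid damping ($\langle t\rangle^{-1}$ for $U^{1,2}_{\neq}$), or from the ghost multiplier bound \eqref{4.15}. Organizing these gains consistently across the transport terms $\mathcal{TQ}^{m}$, the nonlinear stretching $\mathcal{NLS}1,\mathcal{NLS}2$, and the nonlinear pressure $\mathcal{NLP}$ is the core of the argument; only after distributing all available smallness optimally does one close the bootstrap with the constant $4$.
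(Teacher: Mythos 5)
Your outline correctly identifies the main structural ingredients of the argument (symmetrized energy estimate for $\check{K}_{\neq}^{1,2}$, ghost multiplier $m$ taming the linear stretching in the $Q_{\neq}^3$ equation, the criticality of $\varepsilon\le\delta\nu^{2}$ against the double lift-up), and the decomposition of nonlinear terms into zero/non-zero frequency pairs is exactly what the paper does in sections 5--7. However, there is one genuine gap in Step~3 and a couple of misattributions elsewhere.

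\textbf{The gap: zero-mode velocity.} You claim that the estimates \eqref{4..11}--\eqref{4..13} ``follow from $U_{0}^{i}=\Delta_{y,z}^{-1}Q_{0}^{i}$ and standard elliptic regularity.'' This does not work on $\mathbb{T}\times\mathbb{R}\times\mathbb{T}$: the $y$-variable is on $\mathbb{R}$, so $\Delta_{y,z}^{-1}$ has no Poincar\'e gain and $\left\|Q_{0}^{i}\right\|_{H^{N}}$ does \emph{not} control $\left\|U_{0}^{i}\right\|_{L^{2}}$. The paper's Proposition~\ref{pro4.2} explicitly uses $\left\|U_{0}^{i}\right\|_{H^{s+2}}\leqslant\left\|Q_{0}^{i}\right\|_{H^{s}}+\left\|U_{0}^{i}\right\|_{L^{2}}$, i.e. the velocity bounds \eqref{4..11}--\eqref{4..13} are needed as an \emph{input} alongside \eqref{4..1}--\eqref{4..7}; they cannot be derived from them. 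This is why section~\ref{sec7} performs an independent $H^{N-1}$ energy estimate directly on the velocity equations \eqref{7.1}. Relatedly, the extra term $\nu^{1/2}\left\|U_{0}^{2}\right\|_{L^{2}H^{N-1}}$ in \eqref{4..12} is not a ``Fisher-type good term'' (no such term appears in this paper); it is free because $\widehat{u^{2}_{0,0}}=0$ by incompressibility, so Poincar\'e in the periodic $z$-direction gives $\left\|u_{0}^{2}\right\|_{L^{2}}\leqslant\left\|\partial_{z}u_{0}^{2}\right\|_{L^{2}}$, and the latter is already covered by $\nu^{1/2}\left\|\nabla u_{0}^{2}\right\|_{L^{2}}$.

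\textbf{Minor misattributions.} The $\nu^{-1/3}$ loss in \eqref{4..8} is not forced by the lower bound $m\gtrsim\nu^{2/3}$; it is forced by the lift-up term $\mathcal{LU}3=\partial_{ZY}^{L}U_{\neq}^{1}$ in the $Q_{\neq}^{3}$ equation, which pairs $\left\|mQ_{\neq}^{3}\right\|_{L^{2}H^{N}}\lesssim C_{0}\varepsilon\nu^{-\iota-1/6}$ against $\left\|M\check{K}_{\neq}^{1}\right\|_{L^{2}H^{N}}\lesssim\varepsilon\nu^{-1/6}$ and closes only if $\iota\geqslant\frac{1}{3}$; this is spelled out in the remark after Proposition~\ref{pro4.1}. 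Also, the paper's nonlinear estimates do not invoke inviscid damping $\left\langle t\right\rangle^{-1}$ explicitly; the gain is achieved entirely through Corollary~\ref{cor4.1} (trading $\nu^{-1/6}$ for the $\sqrt{-\dot{M}M}$ and $\nu^{1/2}\nabla_{L}$ terms), the recovery formulas \eqref{HSX110}, and careful hierarchy of $C_{0}$, $C_{1}$, $\delta$, with all the $\nu$-power bookkeeping ultimately absorbed because $\varepsilon\le\delta\nu^{2}$. Finally, the paper does not use Gr\"onwall for $Q_{0}^{2,3}$; the lift-up terms are estimated directly and absorbed by choosing $C_{1}$ and $C_{0}$ large in a fixed order (first $C_{1}$, then $C_{0}\gg C_{1}$, then $\delta$ small), as in \eqref{5.52}.
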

  
The proof of Theorem \ref{1.1.} follows directly from Proposition \ref{pro4.2}: By standard local well-posedness results which are established in Lemmas \ref{lem4.3} and \ref{lem4.4}, we can assume that there exists $T>1$ such that the assumptions \eqref{4..2}--\eqref{4..13} hold on   $[1,T]$. Proposition \ref{pro4.2} and the continuity of these norms imply that the set of times on \eqref{4..2}--\eqref{4..13} holds is closed, open-empty in $[1,\infty)$. Moreover, combined with the existence of local solution, the global solution is unique and regular on the time $[0,\infty)$.   

\begin{rem}
Most of these estimates are natural ones for the linearized problem. Given the above definition of multipliers, we consider (\ref{4..8}) which is typical. It comprises a global bound $m MQ_{\neq}^{3}$ in $H^{N}$ weighted by the multiplier $m$ (natural due to (\ref{2.10})$_{3}$ and the linear stretching as discussed in subsection \ref{4.1}), a bound of $\nu^{\frac{1}{2}}\|m M\nabla_{L}Q_{\neq}^{3}\|_{L^{2}H^{N}} $ accounting for the viscous dissipation, and a bound of $\|\sqrt{-\dot{M}M}m Q^{3}_{\neq}\|_{L^{2}H^{N}}$ corresponding to the dissipation-like structure arising from the multipliers as explained in subsection \ref{4.1}.
\end{rem}

\begin{rem}
The introduction of the new variable \eqref{5.1} results in the linear symmetry of $\check{K}^{1} $ and $\check{K}^{2}$ equations, see \eqref{HSX88} and \eqref{HSX99}, which plays a key role in the treatment of nonlinear stability problems. Due to the structure of $\check{K}^{1}$ and $\check{K}^{2}$,  we can close the a priori assumptions \eqref{4..2} and \eqref{4..5}. In a sense, we think that the effect of $\check{K}^{1} $ and $\check{K}^{2}$ is consistent to $Q^{1}$ and $Q^{2}$ respectively, and it is perfect to bypass the handling of the linear coupling terms appeared in \eqref{2.10}.
\end{rem}

\begin{rem}
The estimate \eqref{4..8} for $Q_{\neq}^3$ loss $ \nu^{-\frac{1}{3}}$ on the right-hand side compared to the linearized estimate. This loss occurs when estimating the lift up term $ \partial_{Z Y}^{L} U_{\neq}^1$ in section \ref{sec6}. Note that the loss might not be unique. Indeed, if the estimate for $Q_{\neq}^{3}$ of \eqref{4..8} loss $\nu^{-\iota}$, where the positive number  $\iota$ is chosen later, then  by using $U_{\neq}^1= -|\nabla_{X, Z}|^{-1} |\nabla_{L}|^{-1} \check{K}_{\neq}^1$, we need to treat
\begin{align*}
\mathcal{LU}3 =& \int_{1}^{T}\int m  M {\left\langle D \right\rangle}^{N} Q_{\neq}^3  m M {\left\langle D \right\rangle}^{N}  \partial_{Z Y}^{L} U_{\neq}^1 dVdt \nonumber \\
\lesssim &  \left\| m Q_{\neq}^3   \right\|_{L^2 H^N}  \left\| M \check{K}_{\neq}^1 \right\|_{L^2 H^N}   \nonumber \\
\lesssim &  C_0 \varepsilon  \nu^{-\iota} \nu^{-\frac{1}{6}}   \varepsilon \nu^{-\frac{1}{6}} \nonumber \\
\lesssim & \left(C_0 \varepsilon \nu^{-\iota}  \right)^2 \left( \frac{1}{C_0} \nu^{\iota-\frac{1}{3}} \right)   \nonumber \\
\lesssim & \left(C_0 \varepsilon \nu^{-\iota}  \right)^2,
\end{align*}
with  $\iota \geqslant \frac{1}{3}$ being required. In fact,  similar to the proof of proposition \ref{pro4.1} in section \ref{sec5}--\ref{sec7}, we can obtain $\iota \in [\frac{1}{3}, \frac{1}{2}]$ which is sufficient to close  the bootstrap assumption. Here we think that $\iota=\frac{1}{3}$ is optimal. 
\end{rem}

\begin{rem}
	When the time  $ t \lesssim \nu^{-1}$, the lift-up effect on  $u_0^{2}$ and $u_0^3$ (see (\ref{1.13}) or (\ref{1.*15}) in Theorem \ref{1.1.}))  is an important factor that makes the fluid unstable, which experience a linear growth for this modes.
\end{rem}

\begin{rem}
	All of the a priori assumptions  above can be closed when the initial condition satisfies $\left\|u_{\mathrm{in}}\right\|_{H^{\sigma}}<\delta \nu^{2}$. Here, we use the closed estimate (\ref{4..5}) for example. Indeed, in order to estimate this term, we need to control the following nonlinear pressure term: 
	\begin{align}\label{4**2}
	\mathcal{NLP}\check{K}_{\neq}^2 (U_0, U_{\neq}) =& -\int_{1}^{T}\int M {\left\langle D \right\rangle}^{N} \check{K}_{\neq}^2 M {\left\langle D \right\rangle}^{N}|\partial_{X}| |\nabla_{L}|\partial_{Y}^{L} |\nabla_{L}|^{-2} \partial_{j}^{L}\left(\partial_{i} U_0^{j}  U_{\neq}^{i} \right)_{\neq}dVdt \cdot 1_{i \neq 1} \nonumber \\
	\leqslant & \left\|  \nabla_{L} M \check{K}_{\neq}^2  \right\|_{L^2 H^N}\left(\left\| \nabla U_{0}^1  \right\|_{L^{\infty} H^N}+ \left\| \nabla U_{0}^2  \right\|_{L^{\infty} H^N}+ \left\| \nabla U_{0}^3  \right\|_{L^{\infty} H^N}\right)   \nonumber \\
	\quad&  \times \left( \left\| M \check{K}_{\neq}^2  \right\|_{L^{2} H^N}  + \left\| m M  Q_{\neq}^{3}  \right\|_{L^{2} H^N} \right) \nonumber \\
	\lesssim & \,\varepsilon \nu^{-\frac{1}{2}} \left( \varepsilon + C_1 \varepsilon  \nu^{-1} + C_0 \varepsilon  \nu^{-1} \right) \left(\varepsilon \nu^{-\frac{1}{6}}+ C_0 \varepsilon  \nu^{-\frac{1}{3}} \nu^{-\frac{1}{6}} \right) \nonumber \\
	\lesssim &\, \varepsilon^2 \left(\varepsilon \nu^{-\frac{2}{3}}+C_0 \varepsilon  \nu^{-\frac{5}{3}}+C_0 \varepsilon \nu^{-1}+  C_0^2 \varepsilon  \nu^{-2}\right)  \nonumber \\
	\lesssim &\,\varepsilon^2,
	\end{align}
	Due to both $U_0^2$ and $U_0^3$ loss $\nu^{-1}$ from the lift-up effect, as well as the term $\partial_{X} U_{\neq}^3$ losses $\nu^{-\frac{1}{2}}$ in (\ref{4..12}), (\ref{4..13}) and (\ref{4..8}), respectively,  it ultimately leads to the need to assume that the initial value satisfy $\left\|u_{\mathrm{in}}\right\|_{H^{\sigma}}<\delta  \nu^{2}$ with $\delta$ small enough in order to close the a priori assumption under nonlinear interactions.
    \end{rem}
    
    \begin{rem}
    Compared with the stability results of 3D Navier-Stoke equations,  the threshold index $2$  occurs in the initial data in Theorem \ref{1.2.} cannot be replaced by other real numbers less than $2$, due to the lift-up effect in the direction of $u_0^{2, 3}$ caused by the rotation.  Of course,  it  is mathematically and physically interesting  to study the optimal stability threshold of solutions of \eqref{1....3}, which requires new  methods and estimates and is left for future  research.
\end{rem}
     
\subsection{Choice of constants}\label{4.3.}

There constants have not been specified yet:  $\delta \geqslant \varepsilon \nu^{-2}$, which appears in the statement of Theorem \ref{1.2.}, and $C_{0}$ and  $C_{1}$ which appear in the above bootstrap estimate. In the course of the proof, we choose them small such that
\begin{align*}
	\frac{1}{C_0}+\frac{1}{C_1}+\frac{C_0^2}{C_1} \delta+C_0^2 \delta < \frac{1}{\bar{C}},
\end{align*}
for a universal constant $\bar{C}=\bar{C}(\delta)$ that depends only on $\delta$. Indeed, we first fixes $C_1$, then $C_0$ dependent on $C_1$, which $C_{0}>C_{1}$ and finally choose $\delta$ small relative to both.

\begin{pro}\label{pro4.2}
	Under the bootstrap hypotheses, for $\varepsilon  \nu^{-2}$ sufficiently small, the following estimates hold:
	
	$(1)$ the zero frequency velocity $U_0^{i}$ $(i=1, 2, 3)$:
		\begin{align}
		\left\| U_0^1 \right\|_{L^{\infty} H^{N+2}}+ \nu^{\frac{1}{2}} \left\| \nabla U_0^1 \right\|_{L^{2} H^{N+2}} & \lesssim \varepsilon,     \label{4..16}\\
		\left\| U_0^2 \right\|_{L^{\infty} H^{N+2}}+ \nu^{\frac{1}{2}} \left\| \nabla U_0^2 \right\|_{L^{2} H^{N+2}} & \lesssim C_1 \varepsilon  \nu^{-1},     \label{4..17}\\
		\left\| U_0^3 \right\|_{L^{\infty} H^{N+2}}+ \nu^{\frac{1}{2}} \left\| \nabla U_0^3 \right\|_{L^{2} H^{N+2}} & \lesssim C_0 \varepsilon  \nu^{-1}.     \label{4..18} 
		\end{align}
	
	$(2)$ the non-zero frequency velocity $U_{\neq}^i$ $(i=1, 2, 3)$:
		\begin{align}
		\left\| U_{\neq}^1 \right\|_{L^{\infty} H^N}+ \nu^{\frac{1}{2}} \left\| \nabla_{L} U_{\neq}^1 \right\|_{L^{2} H^N} + \left\| \sqrt{-\dot{M}M}  U_{\neq}^1  \right\|_{L^{2} H^N} & \lesssim \varepsilon,   \label{4..22}\\
		\left\|  U_{\neq}^2 \right\|_{L^{\infty} H^N}+ \nu^{\frac{1}{2}} \left\|  \nabla_{L}  U_{\neq}^2 \right\|_{L^{2} H^N}+ \left\| \sqrt{-\dot{M}M}  U_{\neq}^2  \right\|_{L^{2} H^N} & \lesssim \varepsilon,      \label{4..23}\\
		\left\| m \Delta_{L} U_{\neq}^3 \right\|_{L^{\infty} H^N}+ \nu^{\frac{1}{2}} \left\| m \nabla_{L} \Delta_{L} U_{\neq}^3 \right\|_{L^{2} H^N}+ \left\| \sqrt{-\dot{M}M} m \Delta_{L} U_{\neq}^3  \right\|_{L^{2} H^N} & \lesssim C_0 \varepsilon  \nu^{-\frac{1}{3}},  \label{4..21}\\
		\left\|  U_{\neq}^3 \right\|_{L^{\infty} H^N}+ \nu^{\frac{1}{2}} \left\| \nabla_{L} U_{\neq}^3 \right\|_{L^{2} H^N}+ \left\| \sqrt{-\dot{M}M}  U_{\neq}^3  \right\|_{L^{2} H^N} & \lesssim C_0 \varepsilon  \nu^{-\frac{1}{3}}.  \label{4..24}
		\end{align}
	
\end{pro}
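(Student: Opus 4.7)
The main strategy is to recognise Proposition \ref{pro4.2} as a purely algebraic consequence of the bootstrap bounds in Proposition \ref{pro4.1}: each component of the velocity is an explicit Fourier multiplier applied to one of the good unknowns $\check{K}^{1,2}_{\neq}$, $Q^3_{\neq}$ or $Q_0^i$, so the estimates transfer once the relevant symbols are controlled. The multiplier ingredients I need are the two-sided bound $c\leqslant M\leqslant 1$ from Lemma \ref{lem4.2}, the inequality $m\gtrsim |k,l|^2/|k,\eta- kt,l|^2$ from \eqref{lem4.1.11}, and the elementary fact that $|\partial_X|$, $|\nabla_{X,Z}|$ and $|\nabla_L|$ are all bounded below by $1$ on non-zero $x$-frequencies.

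For \eqref{4..22}--\eqref{4..23} I would invert \eqref{5.1} to obtain $|\widehat{U^1_{\neq}}|=|\widehat{\check{K}^1_{\neq}}|/(|\nabla_{X,Z}||\nabla_L|)\leqslant |\widehat{\check{K}^1_{\neq}}|$ and $|\widehat{\nabla_L U^1_{\neq}}|=|\widehat{\check{K}^1_{\neq}}|/|\nabla_{X,Z}|\leqslant |\widehat{\check{K}^1_{\neq}}|$, with the analogous inequalities for $U^2_{\neq}$. The three time--space norms in each target then fall directly onto those in \eqref{4..2}--\eqref{4..5}, using $M\gtrsim 1$ together with the observation $\|\check{K}^{1,2}_{\neq}\|_{L^2 H^N}\leqslant \|\nabla_L\check{K}^{1,2}_{\neq}\|_{L^2 H^N}$ on $k\neq 0$ to recover the $L^2H^N$ norm without $\nabla_L$. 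For \eqref{4..21}--\eqref{4..24}, the identity $Q^3=\Delta_L U^3$ in moving coordinates makes \eqref{4..21} literally the same as \eqref{4..8}. To deduce \eqref{4..24} I invoke \eqref{lem4.1.11}, which yields $|\Delta_L^{-1}|\lesssim m/|k,l|^2\leqslant m$ on $k\neq 0$ and therefore $|\widehat{U^3_{\neq}}|\lesssim m|\widehat{Q^3_{\neq}}|$ and $|\widehat{\nabla_L U^3_{\neq}}|=|\widehat{Q^3_{\neq}}|/|\nabla_L|\lesssim m|\nabla_L||\widehat{Q^3_{\neq}}|$; these are precisely the quantities controlled in \eqref{4..8}.

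For \eqref{4..16}--\eqref{4..18} I use $U_0^i=\Delta_{y,z}^{-1}Q_0^i$, which holds because $u_0^i$ is $x$-independent and hence $\Delta u_0^i=\Delta_{y,z}u_0^i$. The symbol $|\eta,l|^{-2}$ is singular at $(\eta,l)=(0,0)$, so I split in $(y,z)$-frequency. On $\{\eta^2+l^2\geqslant 1\}$ two derivatives are gained and the high-frequency part of $\|U_0^i\|_{H^{N+2}}$ is dominated by $\|Q_0^i\|_{H^N}$; on the complementary low-frequency region the weight ${\langle \eta,l\rangle}^{N+2}$ is $O(1)$, so the contribution to $\|U_0^i\|_{H^{N+2}}$ is $\lesssim \|U_0^i\|_{L^2}\leqslant \|U_0^i\|_{H^{N-1}}$, which is controlled by the companion hypotheses \eqref{4..11}--\eqref{4..13}. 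The identical split applied to $\nabla U_0^i$, using $\nu^{1/2}\|\nabla Q_0^i\|_{L^2 H^N}$ on high frequencies and $\nu^{1/2}\|\nabla U_0^i\|_{L^2 H^{N-1}}$ on low frequencies, produces the dissipation estimates.

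The \textbf{main obstacle} is precisely the low-frequency Laplace inversion in the zero-frequency step: it closes cleanly only because the $H^{N-1}$ bound on $U_0^i$ is already baked into the bootstrap, so one cannot deduce \eqref{4..16}--\eqref{4..18} from \eqref{4..1}--\eqref{4..7} in isolation. Once that companion estimate is invoked, every remaining step is straightforward Fourier-symbol bookkeeping with no additional loss of powers of $\nu$ beyond what is already present in the bootstrap hypotheses.
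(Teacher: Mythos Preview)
Your proposal is correct and follows essentially the same route as the paper: the paper also inverts \eqref{5.1} (equivalently \eqref{HSX110}) to transfer the bootstrap bounds on $\check{K}^{1,2}_{\neq}$ and $Q^3_{\neq}$ directly to $U^{1,2,3}_{\neq}$, invokes \eqref{lem4.1.11} in the form $1\lesssim |m^{1/2}\Delta_L|$ to pass from \eqref{4..21} to \eqref{4..24}, and for the zero modes uses the elementary splitting $\|U_0^i\|_{H^{s+2}}\leqslant \|\Delta U_0^i\|_{H^s}+\|U_0^i\|_{L^2}$, combining the $Q_0^i$ hypotheses with the $H^{N-1}$ velocity hypotheses exactly as you describe. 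Your account is more explicit than the paper's terse proof but contains no additional ideas.
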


\begin{proof}
      For any $1<s \leqslant N$, it holds
	\begin{align}
	\left\|  U_0^1  \right\|_{H^{s+2}} \leqslant \left\|  \Delta U_0^1  \right\|_{H^{s}}+\left\| U_0^1   \right\|_{L^2}=\left\|  Q_0^1  \right\|_{H^{s}}+\left\| U_0^1   \right\|_{L^2}.\nonumber
	\end{align}
	Therefore, by the bootstrap hypotheses \eqref{4..1} and \eqref{4..11}, the estimate \eqref{4..16} can be obtained. Similarly, other estimates on the zero frequency velocity follow from the bootstrap hypotheses.

From \eqref{HSX110}, we can immediately  get \eqref{4..22}--\eqref{4..21} by using the bootstrap hypotheses \eqref{4..2}, \eqref{4..5} and \eqref{4..8}. We have  $1  \lesssim  |m^{\frac{1}{2}}  \Delta_{L}|$ due to (\ref{lem4.1.11}). Then, the estimate \eqref{4..21} implies \eqref{4..24}. 
\end{proof}

The next proposition details  the enhanced dissipation of solutions.
\begin{pro}\label{pro4.3}
	Under the bootstrap hypotheses, the following additional estimates hold:
	
	$(1)$ the enhanced dissipation of the non-zero frequency $\check{K}_{\neq}^{1, 2}$ and $Q_{\neq}^3$:
		\begin{align}
		\left\|  \check{K}_{\neq}^{1, 2} \right\|_{L^{2} H^N} & \lesssim \varepsilon \nu^{-\frac{1}{6}},   \label{4..25}\\
		\left\| m Q_{\neq}^3 \right\|_{L^{2} H^N} & \lesssim C_0 \varepsilon  \nu^{-\frac{1}{2}};  \label{4..27}
		\end{align}
	
	$(2)$ the enhanced dissipation of the non-zero frequency $U_{\neq}$:
		\begin{align}
		\left\| \nabla_{L} U_{\neq}^{1, 2} \right\|_{L^{2} H^N} & \lesssim \varepsilon \nu^{-\frac{1}{6}},   \label{4..28}\\
%		\left\| \nabla_{L} U_{\neq}^{1, 2} \right\|_{L^{2} H^{N-1}} & \lesssim  \varepsilon,   \label{4..36}\\
		\left\| \Delta_{X, Z} U_{\neq}^3 \right\|_{L^{2} H^N} & \lesssim C_0\varepsilon  \nu^{-\frac{1}{2}};   \label{4..30}
		\end{align}
		
	$(3)$ the enhanced dissipation of  $tU_{\neq}$:
		\begin{align}
		\left\| t \partial_{X} U_{\neq}^{1, 2} \right\|_{L^{2} H^{N-1}} & \lesssim \varepsilon \nu^{-\frac{1}{6}},   \label{4..31}\\
		\left\| t \partial_{X} U_{\neq}^3 \right\|_{L^{2} H^{N-1}} & \lesssim C_0\varepsilon  \nu^{-\frac{5}{6}}.  \label{4..33} 
		\end{align}
\end{pro}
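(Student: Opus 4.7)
The proof is a consequence of three ingredients already developed: Corollary~\ref{cor4.1}, which trades the dissipation-like weight $\sqrt{-\dot{M}M}$ and the viscous gradient $\nu^{1/2}\nabla_L$ for a plain $L^2 H^s$ norm at a cost of $\nu^{-1/6}$; the recovery formulas \eqref{HSX110} together with $\widehat{U^3_\neq} = -\widehat{Q^3_\neq}/w$, expressing the velocity via the symmetrized variables $\check{K}^{1,2}$ and $Q^3$; and the algebraic identity $t\partial_X = \partial_Y - \partial_Y^L$ coming from $\partial_Y^L = \partial_Y - t\partial_X$. Throughout I use that $M\gtrsim 1$ uniformly in $\nu$ by Lemma~\ref{lem4.2}(1), and that $m$, being a Fourier multiplier, commutes with $\nabla_L$.

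\textbf{Main steps.} First I would apply Corollary~\ref{cor4.1} with $g=\check{K}^1_\neq$ and $g=\check{K}^2_\neq$; the bootstrap bounds \eqref{4..2} and \eqref{4..5} then supply the right-hand side and yield \eqref{4..25}. Applying the same corollary to $g=mQ^3_\neq$ and using that $\nabla_L(mQ^3_\neq)=m\nabla_L Q^3_\neq$ produces \eqref{4..27}; here the $\nu^{-1/6}$ loss of the corollary combines with the $\nu^{-1/3}$ already present in \eqref{4..8} to give the $\nu^{-1/2}$ in \eqref{4..27}. Next, \eqref{4..28} follows from \eqref{4..25} by comparing Fourier symbols: for $k\neq 0$ one has $|\widehat{\nabla_L U^1_\neq}| = |\widehat{\check{K}^1_\neq}|/|k,l| \leq |\widehat{\check{K}^1_\neq}|$ and analogously $|\widehat{\nabla_L U^2_\neq}| \leq |\widehat{\check{K}^2_\neq}|$. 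Similarly, \eqref{4..30} follows from \eqref{4..27} after noting $\widehat{\Delta_{X,Z}U^3_\neq} = (k^2+l^2)w^{-1}\widehat{Q^3_\neq}$ and invoking the lower bound \eqref{lem4.1.11}, $m\gtrsim (k^2+l^2)/w$. Finally, \eqref{4..31} and \eqref{4..33} follow from the decomposition
\begin{align*}
\|t\partial_X U^i_\neq\|_{L^2 H^{N-1}} \leq \|\partial_Y U^i_\neq\|_{L^2 H^{N-1}} + \|\partial_Y^L U^i_\neq\|_{L^2 H^{N-1}},
\end{align*}
where the second term is controlled by \eqref{4..28} (respectively by Proposition~\ref{pro4.2} for $U^3_\neq$), and the first by $\|U^i_\neq\|_{L^2 H^N}$. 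The latter norm is recovered from $|\widehat{U^{1,2}_\neq}|\leq |\widehat{\check{K}^{1,2}_\neq}|$ (again using $k\neq 0$ so that the denominators in \eqref{HSX110} are $\geq 1$) combined with \eqref{4..25}, and from $|\widehat{U^3_\neq}| = w^{-1}|\widehat{Q^3_\neq}| \leq m|\widehat{Q^3_\neq}|$ combined with \eqref{4..27}.

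\textbf{Main obstacle.} No genuinely hard analytic step appears; the whole proposition amounts to a careful accounting of $\nu$-losses. The one subtlety is \eqref{4..33}, where the $\nu^{-1/3}$ loss inherent in the bootstrap for $Q^3_\neq$, the $\nu^{-1/6}$ loss of Corollary~\ref{cor4.1}, and the $\nu^{-1/2}$ from the extra $\nabla_L$ derivative all compound into the final exponent $\nu^{-5/6}$, and one should check that the two contributions to $\|t\partial_X U^3_\neq\|_{L^2 H^{N-1}}$ are dominated by this worst one.
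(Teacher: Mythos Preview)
Your proposal is correct and follows essentially the same route as the paper. Parts (1) and (2) coincide exactly with the paper's argument: Corollary~\ref{cor4.1} applied to $\check{K}^{1,2}_{\neq}$ and $mQ^3_{\neq}$, the recovery formulas \eqref{HSX110}, and the multiplier bound \eqref{lem4.1.11}. For part (3) the paper uses the single pointwise inequality $|kt|\lesssim \langle \eta-kt\rangle\langle \eta\rangle$ (which absorbs one factor into $\nabla_L$ and the other into the $H^{N-1}\to H^N$ shift), whereas you decompose $t\partial_X=\partial_Y-\partial_Y^L$ and bound the two pieces separately; these are the same manipulation written in physical vs.\ Fourier language, and both yield the stated bounds. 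One cosmetic remark: in your ``main obstacle'' commentary the three losses $\nu^{-1/3},\nu^{-1/6},\nu^{-1/2}$ do not act on a single term (that would give $\nu^{-1}$); rather, the $\partial_Y^L$ piece sees $\nu^{-1/3}\cdot\nu^{-1/2}=\nu^{-5/6}$ via \eqref{4..24}, while the $\partial_Y$ piece sees only $\nu^{-1/2}$ via \eqref{4..27}, and the worse of the two wins. Your actual argument handles this correctly.
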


\begin{proof}
First,	by Corollary \ref{cor4.1},   \eqref{4..2}--\eqref{4..5}, we have
	\begin{align*}
	\left\| \check{K}_{\neq}^{1, 2} \right\|_{L^{2} H^N} \lesssim \nu^{-\frac{1}{6}} \left( \nu^{\frac{1}{2}} \left\|  \nabla_{L} \check{K}_{\neq}^{1, 2} \right\|_{L^2 H^{N}}+\left\|  \sqrt{-\dot{M} M} \check{K}_{\neq}^{1, 2} \right\|_{L^2 H^{N}} \right) \lesssim  \varepsilon \nu^{-\frac{1}{6}},
	\end{align*}
	and similarly for  $m Q_{\neq}^3$.
	
Next,  for any $k\neq 0$, by the fact $U_{\neq}^1=-|\nabla_{X, Z}|^{-1} |\nabla_{L}|^{-1} \check{K}_{\neq}^{1}$ and $U_{\neq}^2=-|\partial_{X}|^{-1} |\nabla_{L}|^{-1} \check{K}_{\neq}^{2}$, we have
	\begin{align}
	\left\| \nabla_{L} U_{\neq}^{1, 2} \right\|_{L^{2} H^N}  \lesssim &\left\|  M \check{K}_{\neq}^{1, 2} \right\|_{L^2 H^N}  \nonumber \\
	\lesssim & \nu^{-\frac{1}{6}} \left(\nu^{\frac{1}{2}} \left\| M \nabla_{L} \check{K}_{\neq}^{1, 2} \right\|_{L^{2} H^N} + \left\| \sqrt{-\dot{M}M} \check{K}_{\neq}^{1, 2}  \right\|_{L^{2} H^N} \right)  \nonumber \\
	\lesssim & \varepsilon \nu^{-\frac{1}{6}}, \nonumber
	\end{align}
which gives (\ref{4..28}).	
	
	Now we continue to estimate \eqref{4..30}. Notice that for any $k \neq 0$, we have
	\begin{align*}
	|k, l|^2 \leqslant \frac{k^2+l^2}{k^2+(\eta- kt)^2+l^2} |k, \eta- kt, l|^2,
	\end{align*}
	therefore, it is easy to deduce that
	\begin{align}
	\left\| \Delta_{X, Z} U_{\neq}^3 \right\|_{L^{2} H^N} \lesssim \left\| m \Delta_{L} U_{\neq}^3 \right\|_{L^{2} H^N} \lesssim C_0\varepsilon  \nu^{-\frac{1}{2}},\nonumber
	\end{align}
	by \eqref{4..21} and Corollary \ref{cor4.1}.
	
	In order to estimate \eqref{4..31}, we can use $|kt| \lesssim {\left\langle \eta- kt \right\rangle} {\left\langle \eta \right\rangle}$ and \eqref{4..28} to derive
	\begin{align*}
	\left\| t \partial_{X} U_{\neq}^{1, 2} \right\|_{L^{2} H^{N-1}} \lesssim \left\| \nabla_{L} U_{\neq}^{1, 2} \right\|_{L^2 H^{N}} \lesssim \varepsilon \nu^{-\frac{1}{6}}.
	\end{align*}
	Similarly, by   \eqref{lem4.1.11},  it holds
	\begin{align}
	\left\| t \partial_{X} U_{\neq}^3 \right\|_{L^{2} H^{N-1}} \lesssim \left\| \nabla_{L} m \Delta_{L} U_{\neq}^3   \right\|_{L^2 H^N} \lesssim C_0\varepsilon  \nu^{-\frac{5}{6}}.\nonumber
	\end{align}
\end{proof}
In fact, Theorem \ref{1.2.} can be directly derived from Propositions \ref{pro4.2} and \ref{pro4.3}. Therefore, we only need to prove Proposition \ref{4.1} and its proof will be given in the following sections.

\section{Energy estimates on $\check{K}_{\neq}^{1, 2}$ and $Q_{0}^{1, 2}$ }\label{sec5}
In this section, we want to prove that under the bootstrap hypotheses of Proposition \ref{pro4.1}, the estimates on $\check{K}_{\neq}^{1, 2}$ and $Q_0^{1, 2}$  hold (i.e., \eqref{4..2}--\eqref{4..5} and \eqref{4..1}--\eqref{4..4}), with 8 replaced by 4 on the right-hand side.

\subsection{$H^{N}$ estimates on $\check{K}_{\neq}^1$ and $\check{K}_{\neq}^2$}

First, we recall the equations of $\check{K}^1$ and $\check{K}^2$.
From \eqref{HSX88} and \eqref{HSX99}, we obtain the equations of  $\check{K}_{\neq}^1$ and $\check{K}_{\neq}^2$:
\begin{align}\label{5.2}
\left\{\begin{array}{l}
\partial_{t} \check{K}_{\neq}^1-\nu \Delta_{L} \check{K}_{\neq}^1+  |\partial_{X}| |\nabla_{X, Z}| |\nabla_{L}|^{-2} \check{K}_{\neq}^2\\
\quad  =|\nabla_{X, Z}| |\nabla_{L}|\left( U \cdot \nabla_{L} U^{1} \right)_{\neq}-|\nabla_{X, Z}| |\nabla_{L}|\partial_{X} |\nabla_{L}|^{-2}\left(\partial_{i}^{L} U^{j} \partial_{j}^{L} U^{i}\right)_{\neq}, \\
\partial_{t} \check{K}_{\neq}^2-\nu \Delta_{L} \check{K}_{\neq}^2-  |\partial_{X}| |\nabla_{X, Z}| |\nabla_{L}|^{-2} \check{K}_{\neq}^1\\
\quad  =|\partial_{X}| |\nabla_{L}|\left( U \cdot \nabla_{L} U^{2} \right)_{\neq}-|\partial_{X}| |\nabla_{L}|\partial_{Y}^{L} |\nabla_{L}|^{-2}\left(\partial_{i}^{L} U^{j} \partial_{j}^{L} U^{i}\right)_{\neq}.
\end{array}\right.
\end{align}
An energy estimate yields
\begin{align}\label{4..66}
&\frac{1}{2}\frac{d}{dt} \left( \left\|  M \check{K}_{\neq}^1  \right\|_{H^N}^2 +\left\|  M \check{K}_{\neq}^2  \right\|_{H^N}^2 \right) + \left\|  \sqrt{-\dot{M}M}  \check{K}_{\neq}^1  \right\|_{H^N}^2+ \left\|  \sqrt{-\dot{M}M}  \check{K}_{\neq}^2  \right\|_{H^N}^2 \nonumber \\
&= \int M {\left\langle D \right\rangle}^{N} \check{K}_{\neq}^1 M {\left\langle D \right\rangle}^{N} \Big[  \nu \Delta_{L} \check{K}_{\neq}^1-  |\partial_{X}| |\nabla_{X, Z}| |\nabla_{L}|^{-2} \check{K}_{\neq}^2+ |\nabla_{X, Z}| |\nabla_{L}|\left( U \cdot \nabla_{L} U^{1} \right)_{\neq} \nonumber \\
& \quad -|\nabla_{X, Z}| |\nabla_{L}|\partial_{X} |\nabla_{L}|^{-2}\left(\partial_{i}^{L} U^{j} \partial_{j}^{L} U^{i}\right)_{\neq}  \Big] dV \nonumber \\
& \quad + \int M {\left\langle D \right\rangle}^{N} \check{K}_{\neq}^2 M {\left\langle D \right\rangle}^{N} \Big[ \nu \Delta_{L} \check{K}_{\neq}^2+  |\partial_{X}| |\nabla_{X, Z}| |\nabla_{L}|^{-2} \check{K}_{\neq}^1+|\partial_{X}| |\nabla_{L}|\left( U \cdot \nabla_{L} U^{2} \right)_{\neq} \nonumber \\
& \quad -|\partial_{X}| |\nabla_{L}|\partial_{Y}^{L} |\nabla_{L}|^{-2}\left(\partial_{i}^{L} U^{j} \partial_{j}^{L} U^{i}\right)_{\neq} \Big] dV \nonumber \\
&=-\nu \left\|  \nabla_{L} M \check{K}_{\neq}^1  \right\|_{H^N}^2 -\nu \left\|  \nabla_{L} M \check{K}_{\neq}^2  \right\|_{H^N}^2+ \int M {\left\langle D \right\rangle}^{N} \check{K}_{\neq}^1 M {\left\langle D \right\rangle}^{N} \Big[ |\nabla_{X, Z}| |\nabla_{L}|\left( U \cdot \nabla_{L} U^{1} \right)_{\neq} \nonumber \\
&\quad- |\nabla_{X, Z}| |\nabla_{L}|\partial_{X} |\nabla_{L}|^{-2}\left(\partial_{i}^{L} U^{j} \partial_{j}^{L} U^{i}\right)_{\neq} \Big] dV + \int M {\left\langle D \right\rangle}^{N} \check{K}_{\neq}^2 M {\left\langle D \right\rangle}^{N} \Big[ |\partial_{X}| |\nabla_{L}|\left( U \cdot \nabla_{L} U^{2} \right)_{\neq} \nonumber \\
& \quad- |\partial_{X}| |\nabla_{L}|\partial_{Y}^{L} |\nabla_{L}|^{-2}\left(\partial_{i}^{L} U^{j} \partial_{j}^{L} U^{i}\right)_{\neq} \Big] dV
\end{align}
where we have used 
\begin{align*}
&\int  M {\left\langle D \right\rangle}^{N} \check{K}_{\neq}^1  M {\left\langle D \right\rangle}^{N} \partial_t \check{K}_{\neq}^1 dV+ \int  M {\left\langle D \right\rangle}^{N} \check{K}_{\neq}^2  M {\left\langle D \right\rangle}^{N} \partial_t \check{K}_{\neq}^2 dV    \\
&\quad=\frac{1}{2}\frac{d}{dt} \left\| M\check{K}_{\neq}^{1, 2}  \right\|_{H^N}^2+\left\| \sqrt{-\dot{M}M} \check{K}_{\neq}^{1, 2}  \right\|_{H^N}^2.
\end{align*}

Now, integrating \eqref{4..66} with respect  to $t$, it  yields
\begin{align}\label{5.4}
&\frac{1}{2} \left\|  M \check{K}_{\neq}^{1, 2} (T) \right\|_{H^N}^2 + \left\|  \sqrt{-\dot{M}M}  \check{K}_{\neq}^{1, 2} \right\|_{L^2 H^N}^2+\nu \left\|  \nabla_{L} M \check{K}_{\neq}^{1, 2}  \right\|_{L^2 H^N}^2 \nonumber \\
&=\frac{1}{2} \left\|  M \check{K}_{\neq}^{1, 2} (1) \right\|_{H^N}^2 + \int_{1}^{T}\int M {\left\langle D \right\rangle}^{N} \check{K}_{\neq}^1 M {\left\langle D \right\rangle}^{N}  |\nabla_{X, Z}| |\nabla_{L}|\left( U \cdot \nabla_{L} U^{1} \right)_{\neq}dVdt \nonumber \\
&\quad- \int_{1}^{T}\int M {\left\langle D \right\rangle}^{N} \check{K}_{\neq}^1 M {\left\langle D \right\rangle}^{N}|\nabla_{X, Z}| |\nabla_{L}|\partial_{X} |\nabla_{L}|^{-2}\left(\partial_{i}^{L} U^{j} \partial_{j}^{L} U^{i}\right)_{\neq} dVdt \nonumber \\
& \quad + \int_{1}^{T}\int M {\left\langle D \right\rangle}^{N} \check{K}_{\neq}^2 M {\left\langle D \right\rangle}^{N} |\partial_{X}| |\nabla_{L}|\left( U \cdot \nabla_{L} U^{2} \right)_{\neq} dVdt\nonumber \\
& \quad- \int_{1}^{T}\int M {\left\langle D \right\rangle}^{N} \check{K}_{\neq}^2 M {\left\langle D \right\rangle}^{N} |\partial_{X}| |\nabla_{L}|\partial_{Y}^{L} |\nabla_{L}|^{-2}\left(\partial_{i}^{L} U^{j} \partial_{j}^{L} U^{i}\right)_{\neq} dVdt \nonumber \\
&\overset{\Delta}{=}\frac{1}{2} \left\|  M \check{K}_{\neq}^{1, 2} (1) \right\|_{H^N}^2 + \mathcal{T}\check{K}_{\neq}^1+\mathcal{NLP}\check{K}_{\neq}^1+\mathcal{T}\check{K}_{\neq}^2+\mathcal{NLP}\check{K}_{\neq}^2,
\end{align}
where the right-hand side will be estimated term by term as follows.   For the estimate of  the transport  term $\mathcal{T}\check{K}_{\neq}^1$. We decompose $\mathcal{T}\check{K}_{\neq}^1$ into three parts
%\end{itemize}
\begin{align}\label{5.5}
\mathcal{T}\check{K}_{\neq}^1=&\int_{1}^{T}\int M {\left\langle D \right\rangle}^{N} \check{K}_{\neq}^1 M {\left\langle D \right\rangle}^{N}  |\nabla_{X, Z}| |\nabla_{L}|\left( U \cdot \nabla_{L} U^{1} \right)_{\neq}dVdt  \nonumber \\
=& \int_{1}^{T}\int M {\left\langle D \right\rangle}^{N} \check{K}_{\neq}^1 M {\left\langle D \right\rangle}^{N} |\nabla_{X, Z}| |\nabla_{L}| \Big[ \left(U_0 \cdot \nabla_{L} U_{\neq}^{1} \right)_{\neq} \nonumber \\
\quad& + \left( U_{\neq} \cdot \nabla U_0^{1} \right)_{\neq}+ \left( U_{\neq} \cdot \nabla_{L} U_{\neq}^{1} \right)_{\neq}\Big] dVdt  \nonumber \\
\overset{\Delta}{=}& \mathcal{T}\check{K}_{\neq}^1(U_0, U_{\neq}^1)+\mathcal{T}\check{K}_{\neq}^1(U_{\neq}, U_0^1)+\mathcal{T}\check{K}_{\neq}^1(U_{\neq}, U_{\neq}^1).
\end{align}
For $\mathcal{T}\check{K}_{\neq}^1(U_0, U_{\neq}^1)$, we have
\begin{align}\label{5.6}
\mathcal{T}\check{K}_{\neq}^1(U_0, U_{\neq}^1) = & \int_{1}^{T}\int M {\left\langle D \right\rangle}^{N} \check{K}_{\neq}^1 M {\left\langle D \right\rangle}^{N} |\nabla_{X, Z}| |\nabla_{L}| \Big[ \left(U_0^1 \partial_{X} U_{\neq}^{1} \right)_{\neq} \nonumber \\
&\quad + \left(U_0^2  \partial_{Y}^{L} U_{\neq}^{1} \right)_{\neq}+\left(U_0^3  \partial_{Z} U_{\neq}^{1} \right)_{\neq} \Big] dVdt\nonumber \\
\overset{\Delta}{=} & \mathcal{T}\check{K}_{\neq}^1(U_0^1, U_{\neq}^1)+\mathcal{T}\check{K}_{\neq}^1(U_0^2, U_{\neq}^1)+\mathcal{T}\check{K}_{\neq}^1(U_0^3, U_{\neq}^1),
\end{align}
We use \eqref{2.3}, \eqref{4.13}, \eqref{4..2}, \eqref{4..16} and Corollary \ref{cor4.1} to deduce
\begin{align}\label{5.7}
\mathcal{T}\check{K}_{\neq}^1(U_0^1, U_{\neq}^1)&=-\int_{1}^{T}\int M {\left\langle D \right\rangle}^{N} \check{K}_{\neq}^1 M {\left\langle D \right\rangle}^{N} |\nabla_{X, Z}| |\nabla_{L}| \left(U_0^1 \partial_{X} |\nabla_{X, Z}|^{-1} |\nabla_{L}|^{-1}\check{K}_{\neq}^{1} \right)_{\neq}  dVdt \nonumber \\
& \leqslant \left\| \nabla_{L} M \check{K}_{\neq}^1 \right\|_{L^2 H^N} \left( \left\| U_0^1 \right\|_{L^{\infty} H^N} +\left\| \nabla U_0^1 \right\|_{L^{\infty} H^N} \right)\left\| M \check{K}_{\neq}^1 \right\|_{L^2 H^N} \nonumber \\
& \lesssim \varepsilon \nu^{-\frac{1}{2}} \varepsilon \varepsilon \nu^{-\frac{1}{6}} \nonumber \\
& \lesssim \varepsilon^2 \left( \varepsilon \nu^{-\frac{2}{3}}\right),
\end{align}
where $\varepsilon \nu^{-\frac{2}{3}}$ is sufficiently small.
For $\mathcal{T}\check{K}_{\neq}^1(U_0^2, U_{\neq}^1)$, using \eqref{4..2}, \eqref{4..17} and Corollary \ref{cor4.1}, it holds
\begin{align}\label{5.8}
\mathcal{T}\check{K}_{\neq}^1(U_0^2, U_{\neq}^1) = & -\int_{1}^{T}\int M {\left\langle D \right\rangle}^{N} \check{K}_{\neq}^1 M {\left\langle D \right\rangle}^{N} |\nabla_{X, Z}| |\nabla_{L}| \left(U_0^2  \partial_{Y}^{L} |\nabla_{X, Z}|^{-1} |\nabla_{L}|^{-1}\check{K}_{\neq}^{1} \right)_{\neq}  dVdt \nonumber \\
\leqslant & \left\| \nabla_{L} M \check{K}_{\neq}^1 \right\|_{L^2 H^N} \left( \left\| U_0^2 \right\|_{L^{\infty} H^N} +\left\| \nabla U_0^2 \right\|_{L^{\infty} H^N} \right)\left\| M \check{K}_{\neq}^1 \right\|_{L^2 H^N} \nonumber \\
\lesssim & \varepsilon \nu^{-\frac{1}{2}} C_1 \varepsilon  \nu^{-1} \varepsilon \nu^{-\frac{1}{6}} \nonumber \\
\lesssim & \varepsilon^2 \left( C_1 \varepsilon  \nu^{-\frac{5}{3}}\right),
\end{align}
where $C_1 \varepsilon  \nu^{-\frac{5}{3}}$ is sufficiently small.
Finally, we use \eqref{4..2}, \eqref{4..18} and Corollary \ref{cor4.1} to deduce
\begin{align}\label{5.9}
\mathcal{T}\check{K}_{\neq}^1(U_0^3, U_{\neq}^1)= & -\int_{1}^{T}\int M {\left\langle D \right\rangle}^{N} \check{K}_{\neq}^1 M {\left\langle D \right\rangle}^{N} |\nabla_{X, Z}| |\nabla_{L}| \left(U_0^3 \partial_{Z} |\nabla_{X, Z}|^{-1} |\nabla_{L}|^{-1}\check{K}_{\neq}^{1} \right)_{\neq}  dVdt\nonumber \\
\leqslant& \left\| \nabla_{L} M \check{K}_{\neq}^1 \right\|_{L^2 H^N}  \left\| U_0^3 \right\|_{L^{\infty} H^{N+1}} \left\| M \check{K}_{\neq}^1 \right\|_{L^2 H^N} \nonumber \\
\lesssim & \varepsilon \nu^{-\frac{1}{2}} C_0 \varepsilon  \nu^{-1} \varepsilon \nu^{-\frac{1}{6}} \nonumber \\
\lesssim & \varepsilon^2 \left( C_0 \varepsilon  \nu^{-\frac{5}{3}} \right).
\end{align}
Next, substituting \eqref{5.7}--\eqref{5.9} into \eqref{5.6}, it yields
\begin{align*}
\mathcal{T}\check{K}_{\neq}^1(U_0, U_{\neq}^1) \lesssim \varepsilon^2.
\end{align*}

To estimate the term $\mathcal{T}\check{K}_{\neq}^1(U_{\neq}, U_0^1)$, we decompose it into two parts
\begin{align}\label{5.10}
\mathcal{T}\check{K}_{\neq}^1(U_{\neq}, U_0^1) = & \int_{1}^{T}\int  M {\left\langle D \right\rangle}^{N} \check{K}_{\neq}^1  M {\left\langle D \right\rangle}^{N} \left( U_{\neq}^2 \partial_{Y} U_0^{1}+ U_{\neq}^3 \partial_{Z} U_0^{1} \right) dVdt  \nonumber \\
\overset{\Delta}{=}& \mathcal{T}\check{K}_{\neq}^1(U_{\neq}^2, U_0^1)+\mathcal{T}\check{K}_{\neq}^1(U_{\neq}^3, U_0^1),
\end{align}
where the fact $\partial_{X} U_0^1=0$ is used.
For $\mathcal{T}\check{K}_{\neq}^1(U_{\neq}^2, U_0^1)$, using \eqref{4..2}, \eqref{4..5}, \eqref{4..16}, \eqref{5.1} and Corollary \ref{cor4.1}, it holds
\begin{align}\label{5..11}
\mathcal{T}\check{K}_{\neq}^1(U_{\neq}^2, U_0^1)=&-\int_{1}^{T}\int M {\left\langle D \right\rangle}^{N} \check{K}_{\neq}^1 M {\left\langle D \right\rangle}^{N} |\nabla_{X, Z}| |\nabla_{L}| \left(|\partial_{X}|^{-1} |\nabla_{L}|^{-1}\check{K}_{\neq}^{2} \partial_{Y} U_0^1 \right)_{\neq}  dVdt\nonumber \\
\leqslant& \left\| \nabla_{L} M \check{K}_{\neq}^1 \right\|_{L^2 H^N} \left( \left\| \nabla U_0^1 \right\|_{L^{\infty} H^N}+ \left\| U_0^1 \right\|_{L^{\infty} H^{N+2}}\right) \left\| M \check{K}_{\neq}^2 \right\|_{L^2 H^N} \nonumber \\
\lesssim & \varepsilon \nu^{-\frac{1}{2}} \varepsilon \varepsilon \nu^{-\frac{1}{6}} \nonumber \\
\lesssim & \varepsilon^2 \left( \varepsilon \nu^{-\frac{2}{3}}\right),
\end{align}
which suffices for $\varepsilon \nu^{-\frac{2}{3}}$  sufficiently small. For $\mathcal{T}\check{K}_{\neq}^1(U_{\neq}^3, U_0^1)$, we use    to obtain
\begin{align}\label{5.12}
\mathcal{T}\check{K}_{\neq}^1(U_{\neq}^3, U_0^1)=& \int_{1}^{T}\int M {\left\langle D \right\rangle}^{N} \check{K}_{\neq}^1 M {\left\langle D \right\rangle}^{N} |\nabla_{X, Z}| |\nabla_{L}| \left(U_{\neq}^{3} \partial_{Z} U_0^1 \right)_{\neq}  dVdt\nonumber \\
\leqslant& \left\| \nabla_{L} M \check{K}_{\neq}^1 \right\|_{L^2 H^N}  \left\|  U_0^1 \right\|_{L^{\infty} H^{N+1}} \left\| M \nabla_{L} U_{\neq}^3 \right\|_{L^2 H^N} \nonumber \\
\quad & +  \left\| \nabla_{L} M \check{K}_{\neq}^1 \right\|_{L^2 H^N}  \left\|  U_0^1 \right\|_{L^{\infty} H^{N+2}} \left\| M U_{\neq}^3 \right\|_{L^2 H^N}  \nonumber \\
\lesssim & \varepsilon \nu^{-\frac{1}{2}} C_0 \varepsilon  \nu^{-\frac{1}{3}} \nu^{-\frac{1}{2}} \varepsilon+ \varepsilon \nu^{-\frac{1}{2}} C_0 \varepsilon  \nu^{-\frac{1}{3}} \nu^{-\frac{1}{6}} \varepsilon \nonumber \\
\lesssim & \varepsilon^2 \left( C_0 \varepsilon  \nu^{-\frac{4}{3}} + C_0 \varepsilon  \nu^{-1}\right) \nonumber \\
\lesssim & \varepsilon^2,
\end{align}
which $C_0 \varepsilon  \nu^{-\frac{4}{3}}$ is sufficiently small. Hence, we substitute \eqref{5..11}--\eqref{5.12} into \eqref{5.10} and obtain
\begin{align*}
\mathcal{T}\check{K}_{\neq}^1(U_{\neq}, U_{0}^1) \lesssim \varepsilon^2.
\end{align*}

When we estimate   $\mathcal{T}\check{K}_{\neq}^1(U_{\neq}, U_{\neq}^1)$, it can be divided into three parts
\begin{align}\label{5.13}
\mathcal{T}\check{K}_{\neq}^1(U_{\neq}, U_{\neq}^1)= &\int_{1}^{T}\int M {\left\langle D \right\rangle}^{N} \check{K}_{\neq}^1 M {\left\langle D \right\rangle}^{N} |\nabla_{X, Z}| |\nabla_{L}|  \left( U_{\neq} \cdot \nabla_{L} U_{\neq}^{1} \right)_{\neq} dVdt \nonumber \\
=& \int_{1}^{T}\int M {\left\langle D \right\rangle}^{N} \check{K}_{\neq}^1 M {\left\langle D \right\rangle}^{N} |\nabla_{X, Z}| |\nabla_{L}|  \left(U_{\neq}^1 \partial_{X} U_{\neq}^{1}+U_{\neq}^2 \partial_{Y}^{L} U_{\neq}^{1}+ U_{\neq}^3 \partial_{Z} U_{\neq}^{1} \right)_{\neq} dVdt  \nonumber \\
\overset{\Delta}{=}& \mathcal{T}\check{K}_{\neq}^1(U_{\neq}^1, U_{\neq}^1)+\mathcal{T}\check{K}_{\neq}^1(U_{\neq}^2, U_{\neq}^1)+\mathcal{T}\check{K}_{\neq}^1(U_{\neq}^3, U_{\neq}^1).
\end{align}
For $\mathcal{T}\check{K}_{\neq}^1(U_{\neq}^1, U_{\neq}^1)$, by virtue of   \eqref{4..2}, \eqref{5.1} and Corollary \ref{cor4.1}, we have   
\begin{align}\label{5.14}
\mathcal{T}\check{K}_{\neq}^1(U_{\neq}^1, U_{\neq}^1) =& \int_{1}^{T}\int M {\left\langle D \right\rangle}^{N} \check{K}_{\neq}^1 M {\left\langle D \right\rangle}^{N} |\nabla_{X, Z}| |\nabla_{L}|  \Big( |\nabla_{X, Z}|^{-1} |\nabla_{L}|^{-1} \check{K}_{\neq}^1 \nonumber \\
& \quad \partial_{X} |\nabla_{X, Z}|^{-1} |\nabla_{L}|^{-1} \check{K}_{\neq}^1 \Big)_{\neq} dVdt \nonumber \\
\leqslant & \left\|  \nabla_{L} M \check{K}_{\neq}^1  \right\|_{L^2 H^N} \left\| M \check{K}_{\neq}^{1}  \right\|_{L^{\infty} H^N} \left\| M \check{K}_{\neq}^1  \right\|_{L^2 H^N}  \nonumber \\
\lesssim & \varepsilon \nu^{-\frac{1}{2}} \varepsilon \varepsilon \nu^{-\frac{1}{6}} \nonumber \\
\lesssim & \varepsilon^2 \left( \varepsilon \nu^{-\frac{2}{3}}\right).
\end{align}
Then we consider $\mathcal{T}\check{K}_{\neq}^1(U_{\neq}^2, U_{\neq}^1)$. By using \eqref{4..2}, \eqref{4..5}, \eqref{5.1} and Corollary \ref{cor4.1}, one obtains
\begin{align}\label{5..15}
\mathcal{T}\check{K}_{\neq}^1(U_{\neq}^2, U_{\neq}^1) =& \int_{1}^{T}\int M {\left\langle D \right\rangle}^{N} \check{K}_{\neq}^1 M {\left\langle D \right\rangle}^{N} |\nabla_{X, Z}| |\nabla_{L}|  \Big( |\partial_{X}|^{-1} |\nabla_{L}|^{-1} \check{K}_{\neq}^2 \nonumber \\
& \quad \partial_{Y}^{L} |\nabla_{X, Z}|^{-1} |\nabla_{L}|^{-1} \check{K}_{\neq}^1 \Big)_{\neq} dVdt \nonumber \\
\leqslant & \left\|  \nabla_{L} M \check{K}_{\neq}^1  \right\|_{L^2 H^N} \left\| M \check{K}_{\neq}^{2}  \right\|_{L^{\infty} H^N} \left\| M \check{K}_{\neq}^1  \right\|_{L^2 H^N}  \nonumber \\
\lesssim & \varepsilon \nu^{-\frac{1}{2}} \varepsilon \varepsilon \nu^{-\frac{1}{6}} \nonumber \\
\lesssim & \varepsilon^2 \left( \varepsilon \nu^{-\frac{2}{3}}\right).
\end{align}
Similarly, by using \eqref{4..2}, \eqref{4..24}, Corollary \ref{cor4.1} and the upper bound of $M$, the term $\mathcal{T}\check{K}_{\neq}^1(U_{\neq}^3, U_{\neq}^1)$ can be treated as
\begin{align}\label{5.16}
\mathcal{T}\check{K}_{\neq}^1(U_{\neq}^3, U_{\neq}^1) =& - \int_{1}^{T}\int M {\left\langle D \right\rangle}^{N} \check{K}_{\neq}^1 M {\left\langle D \right\rangle}^{N} |\nabla_{X, Z}| |\nabla_{L}|  \Big( U_{\neq}^3 \partial_{Z} |\nabla_{X, Z}|^{-1} |\nabla_{L}|^{-1} \check{K}_{\neq}^1 \Big)_{\neq} dVdt \nonumber \\
\leqslant & \left\|  \nabla_{L} M \check{K}_{\neq}^1  \right\|_{L^2 H^N} \left\| \nabla_{L} M U_{\neq}^{3}  \right\|_{L^{2} H^N} \left\| M \check{K}_{\neq}^1  \right\|_{L^{\infty} H^N} \nonumber \\
\quad& +\left\|  \nabla_{L} M \check{K}_{\neq}^1  \right\|_{L^2 H^N} \left\|  M U_{\neq}^{3}  \right\|_{L^{2} H^N} \left\| M \check{K}_{\neq}^1  \right\|_{L^{\infty} H^N}  \nonumber \\
\lesssim & \varepsilon \nu^{-\frac{1}{2}} C_0 \varepsilon  \nu^{-\frac{1}{3}} \nu^{-\frac{1}{2}} \varepsilon + \varepsilon \nu^{-\frac{1}{2}} C_0 \varepsilon  \nu^{-\frac{1}{3}} \nu^{-\frac{1}{6}} \varepsilon \nonumber \\
\lesssim & \varepsilon^2 \left( C_0 \varepsilon  \nu^{-\frac{4}{3}}+ C_0 \varepsilon  \nu^{-1}\right),
\end{align}
which suffices for $C_0 \varepsilon  \nu^{-\frac{4}{3}}$ sufficiently small. Hence, combining with \eqref{5.13}--\eqref{5.16}, we immediately have
\begin{align*}
\mathcal{T}\check{K}_{\neq}^1(U_{\neq}, U_{\neq}^1) \lesssim \varepsilon^2.
\end{align*}

We estimate  the nonlinear pressure term $\mathcal{NLP}\check{K}_{\neq}^1$ which divides into
\begin{align}\label{5.17}
\mathcal{NLP}\check{K}_{\neq}^1 =& - \int_{1}^{T}\int M {\left\langle D \right\rangle}^{N} \check{K}_{\neq}^1 M {\left\langle D \right\rangle}^{N}|\nabla_{X, Z}| |\nabla_{L}|\partial_{X} |\nabla_{L}|^{-2}\left(\partial_{i}^{L} U^{j} \partial_{j}^{L} U^{i}\right)_{\neq} dVdt \nonumber \\
=&- \int_{1}^{T}\int M {\left\langle D \right\rangle}^{N} \check{K}_{\neq}^1 M {\left\langle D \right\rangle}^{N}|\nabla_{X, Z}| |\nabla_{L}|\partial_{X} |\nabla_{L}|^{-2}\Big[\partial_{i} U_0^{j} \partial_{j}^{L} U_{\neq}^{i} \nonumber \\
& +  \partial_{i}^{L} U_{\neq}^{j} \partial_{j} U_0^{i}+\partial_{i}^{L} U_{\neq}^{j} \partial_{j}^{L} U_{\neq}^{i}\Big]_{\neq} dVdt \nonumber \\
\overset{\Delta}{=}& \mathcal{NLP}\check{K}_{\neq}^1 (U_0, U_{\neq})+\mathcal{NLP}\check{K}_{\neq}^1 (U_{\neq}, U_{0})+\mathcal{NLP}\check{K}_{\neq}^1 (U_{\neq}, U_{\neq}).
\end{align}
%\end{itemize}
First, we divide $\mathcal{NLP}\check{K}_{\neq}^1 (U_0, U_{\neq})$ into the following four terms
\begin{align}\label{5.18}
\mathcal{NLP}\check{K}_{\neq}^1 (U_0, U_{\neq})=&  \int_{1}^{T}\int M {\left\langle D \right\rangle}^{N}|\nabla_{L}| \check{K}_{\neq}^1 M {\left\langle D \right\rangle}^{N}|\nabla_{X, Z}| \partial_{X} |\nabla_{L}|^{-2}\left(\partial_{i} U_0^{j} \partial_{j}^{L} U_{\neq}^{i} \right)_{\neq} dVdt \cdot 1_{i \neq 1} \nonumber \\
=& \int_{1}^{T}\int M {\left\langle D \right\rangle}^{N}|\nabla_{L}| \check{K}_{\neq}^1 M {\left\langle D \right\rangle}^{N}|\nabla_{X, Z}| \partial_{X} |\nabla_{L}|^{-2}\Big[\partial_{Y} U_0^{1} \partial_{X} U_{\neq}^{2}  \nonumber \\
\quad &+\partial_{Y} U_0^{2} \partial_{Y}^{L} U_{\neq}^{2}+\partial_{Y} U_0^{3} \partial_{Z} U_{\neq}^{2}+\partial_{Z} U_0^{j} \partial_{j}^{L} U_{\neq}^{3} \Big]_{\neq} dVdt  \nonumber \\
\overset{\Delta}{=}& \mathcal{NLP}\check{K}_{\neq}^1 (U_0^1, U_{\neq}^2)+\mathcal{NLP}\check{K}_{\neq}^1 (U_0^2, U_{\neq}^2)+\mathcal{NLP}\check{K}_{\neq}^1 (U_0^3, U_{\neq}^2) \nonumber \\
\quad &+\mathcal{NLP}\check{K}_{\neq}^1 (U_0, U_{\neq}^3).
\end{align}
 
In order to estimate $\mathcal{NLP}\check{K}_{\neq}^1 (U_0^1, U_{\neq}^2)$, we use  \eqref{4..2}--\eqref{4..5}, \eqref{4..16} and Corollary \ref{cor4.1} to obtain
\begin{align}\label{5.19}
\mathcal{NLP}\check{K}_{\neq}^1 (U_0^1, U_{\neq}^2) =& \int_{1}^{T}\int M {\left\langle D \right\rangle}^{N}|\nabla_{L}| \check{K}_{\neq}^1 M {\left\langle D \right\rangle}^{N}|\nabla_{X, Z}| \partial_{X} |\nabla_{L}|^{-2}\Big(\partial_{Y} U_0^{1} \nonumber \\
& \quad \partial_{X} |\partial_{X}|^{-1} |\nabla_{L}|^{-1}\check{K}_{\neq}^{2} \Big)_{\neq}   dVdt \nonumber \\
\leqslant & \left\|  M \nabla_{L} \check{K}_{\neq}^1 \right\|_{L^2 H^N} \left\| \partial_{Y} U_{0}^1 \right\|_{L^{\infty} H^N} \left\| M \check{K}_{\neq}^2 \right\|_{L^2 H^N}  \nonumber \\
\lesssim & \varepsilon \nu^{-\frac{1}{2}} \varepsilon  \varepsilon \nu^{-\frac{1}{6}} \nonumber \\
\lesssim & \varepsilon^2\left( \varepsilon \nu^{-\frac{2}{3}}\right) \nonumber \\
\lesssim & \varepsilon^2,
\end{align}
where  $\varepsilon  \nu^{-\frac{2}{3}}$ is sufficiently small.

For $\mathcal{NLP}\check{K}_{\neq}^1 (U_0^2, U_{\neq}^2)$, by using \eqref{4..2},  \eqref{4..5},  \eqref{4..17}, \eqref{5.1} and Corollary \ref{cor4.1},  we get
\begin{align}\label{5.20}
\mathcal{NLP}\check{K}_{\neq}^1 (U_0^2, U_{\neq}^2) =& -\int_{1}^{T}\int M {\left\langle D \right\rangle}^{N}|\nabla_{L}| \check{K}_{\neq}^1 M {\left\langle D \right\rangle}^{N}|\nabla_{X, Z}| \partial_{X} |\nabla_{L}|^{-2}\Big(\partial_{Y} U_0^{2} \nonumber \\
& \quad \partial_{Y}^{L} |\partial_{X}|^{-1} |\nabla_{L}|^{-1}\check{K}_{\neq}^{2} \Big)_{\neq}   dVdt \nonumber \\
\leqslant & \left\|  M \nabla_{L} \check{K}_{\neq}^1 \right\|_{L^2 H^N} \left\| \partial_{Y} U_{0}^2 \right\|_{L^{\infty} H^N} \left\| M \check{K}_{\neq}^2 \right\|_{L^2 H^N}  \nonumber \\
\lesssim & \varepsilon \nu^{-\frac{1}{2}} C_1 \varepsilon  \nu^{-1} \varepsilon \nu^{-\frac{1}{6}} \nonumber \\
\lesssim & \varepsilon^2\left( C_1 \varepsilon  \nu^{-\frac{5}{3}}\right) \nonumber \\
\lesssim & \varepsilon^2,
\end{align}
which suffices for   $ C_1 \varepsilon  \nu^{-\frac{5}{3}}$ sufficiently small. To estimate $\mathcal{NLP}\check{K}_{\neq}^1 (U_0^3, U_{\neq}^2)$, using \eqref{4..2}, \eqref{4..5}, \eqref{4..18}, \eqref{5.1} and  Corollary \ref{cor4.1} yields
\begin{align}\label{5.21}
\mathcal{NLP}\check{K}_{\neq}^1 (U_0^3, U_{\neq}^2) & -\int_{1}^{T}\int M {\left\langle D \right\rangle}^{N}|\nabla_{L}| \check{K}_{\neq}^1 M {\left\langle D \right\rangle}^{N}|\nabla_{X, Z}| \partial_{X} |\nabla_{L}|^{-2}\Big(\partial_{Y} U_0^{3} \nonumber \\
& \quad \partial_{Z} |\partial_{X}|^{-1} |\nabla_{L}|^{-1}\check{K}_{\neq}^{2} \Big)_{\neq}   dVdt \nonumber \\
\leqslant & \left\|  M \nabla_{L} \check{K}_{\neq}^1 \right\|_{L^2 H^N} \left\| \partial_{Y} U_{0}^3 \right\|_{L^{\infty} H^N} \left\| M \check{K}_{\neq}^2 \right\|_{L^2 H^N}  \nonumber \\
\lesssim & \varepsilon \nu^{-\frac{1}{2}} C_0 \varepsilon  \nu^{-1} \varepsilon \nu^{-\frac{1}{6}} \nonumber \\
\lesssim & \varepsilon^2\left( C_0 \varepsilon  \nu^{-\frac{5}{3}}\right) \nonumber \\
\lesssim & \varepsilon^2,
\end{align}
where  $C_0 \varepsilon  \nu^{-\frac{5}{3}}$ is sufficiently small.
Using Corollary \ref{cor4.1}, \eqref{4..2}, \eqref{4..8}, \eqref{4..16}--\eqref{4..18}, $\partial_{X}U_0^j=0$, the fact $\big| \widehat{\partial_{X} U_{\neq}^3 }\big| \lesssim \big| \widehat{m M Q_{\neq}^3} \big|$ and the upper bound \eqref{4.13} of $M$ yields
\begin{align}\label{5..22}
\mathcal{NLP}\check{K}_{\neq}^1 (U_0, U_{\neq}^3)=& \int_{1}^{T}\int M {\left\langle D \right\rangle}^{N}|\nabla_{L}| \check{K}_{\neq}^1 M {\left\langle D \right\rangle}^{N}|\nabla_{X, Z}| \partial_{X} |\nabla_{L}|^{-2}\left(\partial_{Z} U_0^{j} \partial_{j}^{L} U_{\neq}^{3} \right)_{\neq}   dVdt \nonumber \\
=& \int_{1}^{T}\int M {\left\langle D \right\rangle}^{N}|\nabla_{L}| \check{K}_{\neq}^1 M {\left\langle D \right\rangle}^{N}|\nabla_{X, Z}| |\nabla_{L}|^{-2} \partial_{j}^{L}\left(\partial_{Z} U_0^{j}  \partial_{X} U_{\neq}^{3} \right)_{\neq}   dVdt \nonumber \\
\leqslant & \left\|  M \nabla_{L} \check{K}_{\neq}^1 \right\|_{L^2 H^N} \left\|  U_{0}^{j} \right\|_{L^{\infty} H^{N+1}} \left\| m M Q_{\neq}^3 \right\|_{L^2 H^N}  \nonumber \\
\lesssim & \varepsilon \nu^{-\frac{1}{2}} \left( \varepsilon+C_0 \varepsilon  \nu^{-1} \right) C_0 \varepsilon  \nu^{-\frac{1}{3}}  \nu^{-\frac{1}{6}} \nonumber \\
\lesssim & \varepsilon^2\left( C_0^2 \varepsilon  \nu^{-2}\right) \nonumber \\
\lesssim & \varepsilon^2.
\end{align}
Therefore, we substitute \eqref{5.19}--\eqref{5..22} into \eqref{5.18} to obtain
\begin{align}\label{5.25}
\mathcal{NLP}\check{K}_{\neq}^1 (U_0, U_{\neq}) \lesssim \varepsilon^2.
\end{align}
In  the same way, we have 
\begin{align}\label{5..26}
\mathcal{NLP}\check{K}_{\neq}^1 (U_{\neq}, U_{0}) \lesssim \varepsilon^2.
\end{align}

For $\mathcal{NLP}\check{K}_{\neq}^1 (U_{\neq}, U_{\neq})$, note that the fact $\big| \widehat{\Delta_{X, Z} U_{\neq}^3 }\big| \lesssim \big| \widehat{m M Q_{\neq}^3} \big|$. And we use \eqref{HSX110}, \eqref{4..2}--\eqref{4..8},  \eqref{4..22}--\eqref{4..24} and Corollary \ref{cor4.1} to get
\begin{align}\label{5.27}
\mathcal{NLP}\check{K}_{\neq}^1 (U_{\neq}, U_{\neq})=& \int_{1}^{T}\int M {\left\langle D \right\rangle}^{N} |\nabla_{L}|\check{K}_{\neq}^1 M {\left\langle D \right\rangle}^{N}|\nabla_{X, Z}| \partial_{X} |\nabla_{L}|^{-2} \partial_{i}^{L} \partial_{j}^{L}\left( U_{\neq}^{j}  U_{\neq}^{i}\right)_{\neq} dVdt\nonumber \\
\leqslant & \, 2 \left\|  M \nabla_{L} \check{K}_{\neq}^1 \right\|_{L^{2} H^N} \big( \left\|  |\nabla_{X, Z}| \partial_{X} U_{\neq}^{j} \right\|_{L^2 H^N}  \left\|   U_{\neq}^i \right\|_{L^{\infty} H^N} \nonumber\\
\quad &+  \left\| \partial_{X} U_{\neq}^{j} \right\|_{L^{\infty} H^N}  \left\| |\nabla_{X, Z}|  U_{\neq}^i \right\|_{L^2 H^N} \big) \nonumber \\
\leqslant & \, 2 \left\|  M \nabla_{L} \check{K}_{\neq}^1 \right\|_{L^{2} H^N} \Big[ \Big( \left\| -|\nabla_{X, Z}| \partial_{X} |\nabla_{X, Z}|^{-1} |\nabla_{L}|^{-1} \check{K}_{\neq}^{1} \right\|_{L^{2} H^N}  \nonumber \\
\quad &+\left\| - |\nabla_{X, Z}|\partial_{X} |\partial_{X}|^{-1} |\nabla_{L}|^{-1} \check{K}_{\neq}^{2} \right\|_{L^{2} H^N}+ \left\| m M Q_{\neq}^{3} \right\|_{L^{2} H^N} \Big) \nonumber \\
\quad  & \times\left(\left\|  U_{\neq}^{1, 2} \right\|_{L^{\infty} H^N}+ \left\|    U_{\neq}^{3} \right\|_{L^{\infty} H^N} \right) \nonumber \\
\quad& + \Big( \left\| -|\nabla_{X, Z}| |\nabla_{X, Z}|^{-1} |\nabla_{L}|^{-1} \check{K}_{\neq}^{1} \right\|_{L^{2} H^N}  \nonumber \\
\quad &+\left\| - |\nabla_{X, Z}| |\partial_{X}|^{-1} |\nabla_{L}|^{-1} \check{K}_{\neq}^{2} \right\|_{L^{2} H^N}+ \left\| m M Q_{\neq}^{3} \right\|_{L^{2} H^N} \Big) \nonumber \\
\quad  & \times\left(\left\| M\check{K}_{\neq}^{1, 2} \right\|_{L^{\infty} H^N}+ \left\|    m M Q_{\neq}^{3} \right\|_{L^{\infty} H^N} \right) \Big] \nonumber \\
\leqslant & \, 2 \left\|  M \nabla_{L} \check{K}_{\neq}^1 \right\|_{L^{2} H^N} \Big( \left\| M \check{K}_{\neq}^{1, 2} \right\|_{L^2 H^N} + \left\| m M Q_{\neq}^{3} \right\|_{L^2 H^N} \Big) \nonumber \\
\quad&  \times \left(\left\|    U_{\neq}^{1, 2} \right\|_{L^{\infty} H^N}+ \left\|  U_{\neq}^{3} \right\|_{L^{\infty} H^N} + \left\| M\check{K}_{\neq}^{1, 2} \right\|_{L^{\infty} H^N} + \left\| m M Q_{\neq}^3 \right\|_{L^{\infty} H^N}\right) \nonumber \\
\lesssim & \varepsilon \nu^{-\frac{1}{2}} \left( \varepsilon \nu^{-\frac{1}{6}} +C_0 \varepsilon  \nu^{-\frac{1}{3}} \nu^{-\frac{1}{6}}\right) \left( \varepsilon +C_0 \varepsilon  \nu^{-\frac{1}{3}} \right)  \nonumber \\
\lesssim & \varepsilon^2 \left(\varepsilon \nu^{-\frac{2}{3}} +C_0 \varepsilon  \nu^{-1}+ +C_0^2 \varepsilon  \nu^{-\frac{4}{3}}\right)  \nonumber \\
\lesssim &\varepsilon^2,
\end{align}
which suffices for $C_0^2 \varepsilon  \nu^{-\frac{4}{3}}$ sufficiently small.

We then  estimate  the transport term $\mathcal{T}\check{K}_{\neq}^2$. Note that
\begin{align}\label{5.28}
\mathcal{T}\check{K}_{\neq}^2=&\int_{1}^{T}\int M {\left\langle D \right\rangle}^{N} \check{K}_{\neq}^2 M {\left\langle D \right\rangle}^{N}  |\partial_{X}| |\nabla_{L}|\left( U \cdot \nabla_{L} U^{2} \right)_{\neq}dVdt  \nonumber \\
=& \int_{1}^{T}\int M {\left\langle D \right\rangle}^{N} \check{K}_{\neq}^2 M {\left\langle D \right\rangle}^{N} |\partial_{X}| |\nabla_{L}| \Big[ \left(U_0 \cdot \nabla_{L} U_{\neq}^{2} \right)_{\neq} \nonumber \\
\quad& + \left( U_{\neq} \cdot \nabla U_0^{2} \right)_{\neq}+ \left( U_{\neq} \cdot \nabla_{L} U_{\neq}^{2} \right)_{\neq}\Big] dVdt  \nonumber \\
\overset{\Delta}{=}& \mathcal{T}\check{K}_{\neq}^2(U_0, U_{\neq}^2)+\mathcal{T}\check{K}_{\neq}^2(U_{\neq}, U_0^2)+\mathcal{T}\check{K}_{\neq}^2(U_{\neq}, U_{\neq}^2).
\end{align}
Considering  $\mathcal{T}\check{K}_{\neq}^2(U_0, U_{\neq}^2)$, we decompose it into 
\begin{align}\label{5.29}
\mathcal{T}\check{K}_{\neq}^2(U_0, U_{\neq}^2)= &\int_{1}^{T}\int M {\left\langle D \right\rangle}^{N} \check{K}_{\neq}^2 M {\left\langle D \right\rangle}^{N} |\partial_{X}| |\nabla_{L}|  \left(U_0^1 \partial_{X} U_{\neq}^{2} +U_0^2 \partial_{Y}^{L} U_{\neq}^{2}+U_0^3 \partial_{Z} U_{\neq}^{2}\right)_{\neq} dVdt \nonumber \\
\overset{\Delta}{=}& \mathcal{T}\check{K}_{\neq}^2(U_0^1, U_{\neq}^2)+\mathcal{T}\check{K}_{\neq}^2(U_0^2, U_{\neq}^2)+\mathcal{T}\check{K}_{\neq}^2(U_0^3, U_{\neq}^2),
\end{align}
where the term $\mathcal{T}\check{K}_{\neq}^2(U_0^1, U_{\neq}^2)$ is estimated as 
\begin{align}\label{5.30}
\mathcal{T}\check{K}_{\neq}^2(U_0^1, U_{\neq}^2)=& -\int_{1}^{T}\int M {\left\langle D \right\rangle}^{N} \check{K}_{\neq}^2 M {\left\langle D \right\rangle}^{N} |\partial_{X}| |\nabla_{L}|  \left(U_0^1 \partial_{X} |\partial_{X}|^{-1} |\nabla_{L}|^{-1}\check{K}_{\neq}^{2} \right)_{\neq} dVdt \nonumber \\
\leqslant & \left\|  M \nabla_{L} \check{K}_{\neq}^2 \right\|_{L^2 H^N} \left\|  U_{0}^1 \right\|_{L^{\infty} H^{N}} \left\| M \check{K}_{\neq}^2 \right\|_{L^2 H^N}  \nonumber \\
\lesssim & \varepsilon \nu^{-\frac{1}{2}}  \varepsilon \varepsilon \nu^{-\frac{1}{6}} \nonumber \\
\lesssim & \varepsilon^2\left(  \varepsilon \nu^{-\frac{2}{3}}\right) \nonumber \\
\lesssim & \varepsilon^2,
\end{align}
by using the fact $|\partial_{X}|U_0^1=0$, \eqref{4..5}, \eqref{4..16}, \eqref{5.1} and Corollary \ref{cor4.1}.
Similar to \eqref{5.30}, one has
\begin{align}\label{5.31}
\mathcal{T}\check{K}_{\neq}^2(U_0^2, U_{\neq}^2)=& -\int_{1}^{T}\int M {\left\langle D \right\rangle}^{N} \check{K}_{\neq}^2 M {\left\langle D \right\rangle}^{N} |\partial_{X}| |\nabla_{L}|  \left(U_0^2 \partial_{Y}^{L} |\partial_{X}|^{-1} |\nabla_{L}|^{-1}\check{K}_{\neq}^{2} \right)_{\neq} dVdt \nonumber \\
\leqslant & \left\|  M \nabla_{L} \check{K}_{\neq}^2 \right\|_{L^2 H^N} \left\|  U_{0}^2 \right\|_{L^{\infty} H^{N}} \left\| M \check{K}_{\neq}^2 \right\|_{L^2 H^N}  \nonumber \\
\lesssim & \varepsilon \nu^{-\frac{1}{2}}  C_1 \varepsilon  \nu^{-1} \varepsilon \nu^{-\frac{1}{6}} \nonumber \\
\lesssim & \varepsilon^2\left(  C_1 \varepsilon  \nu^{-\frac{5}{3}}\right) \nonumber \\
\lesssim & \varepsilon^2,
\end{align}
which suffices for  $C_1 \varepsilon  \nu^{-\frac{5}{3}}$ sufficiently small. Notice that $\partial_{X} U_0^3=0$. Hence for $\mathcal{T}\check{K}_{\neq}^2(U_0^3, U_{\neq}^2)$, we use \eqref{4..5}, \eqref{4..18}, \eqref{5.1} and Corollary \ref{cor4.1} to get
\begin{align}\label{5.32}
\mathcal{T}\check{K}_{\neq}^2(U_0^3, U_{\neq}^2)=& -\int_{1}^{T}\int M {\left\langle D \right\rangle}^{N} \check{K}_{\neq}^2 M {\left\langle D \right\rangle}^{N} |\partial_{X}| |\nabla_{L}|  \left(U_0^3 \partial_{Z} |\partial_{X}|^{-1} |\nabla_{L}|^{-1}\check{K}_{\neq}^{2} \right)_{\neq} dVdt \nonumber \\
\leqslant & \left\|  M \nabla_{L} \check{K}_{\neq}^2 \right\|_{L^2 H^N} \left\|  U_{0}^3 \right\|_{L^{\infty} H^{N}} \left\| M \check{K}_{\neq}^2 \right\|_{L^2 H^N}  \nonumber \\
\lesssim & \varepsilon \nu^{-\frac{1}{2}}  C_0 \varepsilon  \nu^{-1} \varepsilon \nu^{-\frac{1}{6}} \nonumber \\
\lesssim & \varepsilon^2\left(  C_0 \varepsilon  \nu^{-\frac{5}{3}}\right) \nonumber \\
\lesssim & \varepsilon^2,
\end{align}
where  $C_0 \varepsilon  \nu^{-\frac{5}{3}}$ is sufficiently small. Therefore, we substitute \eqref{5.30}--\eqref{5.32} into \eqref{5.29} to yield
\begin{align}\label{5.33}
\mathcal{T}\check{K}_{\neq}^2(U_0, U_{\neq}^2) \lesssim \varepsilon^2.
\end{align}

For $\mathcal{T}\check{K}_{\neq}^2(U_{\neq}, U_0^2)$,
we first divide $\mathcal{T}\check{K}_{\neq}^2(U_{\neq}, U_0^2)$ into 
\begin{align}\label{5..34}
\mathcal{T}\check{K}_{\neq}^2(U_{\neq}, U_0^2)=& \int_{1}^{T}\int M {\left\langle D \right\rangle}^{N} \check{K}_{\neq}^2 M {\left\langle D \right\rangle}^{N} |\partial_{X}| |\nabla_{L}| \left( U_{\neq}^2  \partial_{Y} U_0^{2}+ U_{\neq}^3  \partial_{Z} U_0^{2}\right)_{\neq}  dVdt \nonumber \\
\overset{\Delta}{=}& \mathcal{T}\check{K}_{\neq}^2(U_{\neq}^2, U_0^2)+\mathcal{T}\check{K}_{\neq}^2(U_{\neq}^3, U_0^2).
\end{align}
By the fact $\partial_{X} U_0^2=0$, Corollary \ref{cor4.1}, \eqref{4..5} and \eqref{4..17}, we can obtain
\begin{align}\label{5.35}
\mathcal{T}\check{K}_{\neq}^2(U_{\neq}^2, U_0^2)= & -\int_{1}^{T}\int M {\left\langle D \right\rangle}^{N} \check{K}_{\neq}^2 M {\left\langle D \right\rangle}^{N} |\partial_{X}| |\nabla_{L}| \left( |\partial_{X}|^{-1} |\nabla_{L}|^{-1}\check{K}_{\neq}^2  \partial_{Y} U_0^{2}\right)_{\neq}  dVdt   \nonumber \\
\leqslant &  \left\|  M \nabla_{L} \check{K}_{\neq}^2 \right\|_{L^2 H^N} \left\|  \nabla U_{0}^2 \right\|_{L^{2} H^{N}} \left\| M \check{K}_{\neq}^2 \right\|_{L^{\infty} H^N}    \nonumber \\
\lesssim &  \varepsilon \nu^{-\frac{1}{2}} C_1  \varepsilon  \nu^{-1}  \nu^{-\frac{1}{2}}\varepsilon    \nonumber \\
\lesssim & \varepsilon^2 \left( C_1 \varepsilon  \nu^{-2} \right)  \nonumber \\
\lesssim & \varepsilon^2.
\end{align}
For $\mathcal{T}\check{K}_{\neq}^2(U_{\neq}^3, U_0^2)$, notice that $\partial_{X} U_0^2=0$ and the relation $\big| \widehat{\partial_{X} U_{\neq}^3 }\big| \lesssim \big| \widehat{m M Q_{\neq}^3} \big|$. Using \eqref{4..5}, \eqref{4..8} and \eqref{4..24} to obtain
\begin{align}\label{5.36}
\mathcal{T}\check{K}_{\neq}^2(U_{\neq}^3, U_0^2) \leqslant & \left\|  M \nabla_{L} \check{K}_{\neq}^2 \right\|_{L^2 H^N} \left\|  \partial_{X} U_{\neq}^3 \right\|_{L^{2} H^{N}} \left\| U_{0}^2 \right\|_{L^{\infty} H^{N+1}}    \nonumber \\
\lesssim &  \varepsilon \nu^{-\frac{1}{2}}  C_0  \varepsilon  \nu^{-\frac{1}{3}}  \nu^{-\frac{1}{6}} C_1 \varepsilon  \nu^{-1}  \nonumber \\
\lesssim & \varepsilon^2 \left( C_0 C_1 \varepsilon  \nu^{-2} \right)  \nonumber \\
\lesssim & \varepsilon^2,
\end{align}
which suffices for $C_0 C_1 \varepsilon  \nu^{-2}$ sufficiently small. Combining with \eqref{5.35}--\eqref{5.36}, we have
\begin{align}\label{5.37}
\mathcal{T}\check{K}_{\neq}^2(U_{\neq}, U_0^2) \lesssim \varepsilon^2.
\end{align}

Similar to  $\mathcal{T}\check{K}_{\neq}^1(U_{\neq}, U_{\neq}^1)$, the nonlinear term $\mathcal{T}\check{K}_{\neq}^2(U_{\neq}, U_{\neq}^2)$ is rewritten as
\begin{align}\label{5.38}
\mathcal{T}\check{K}_{\neq}^2(U_{\neq}, U_{\neq}^2)= & \int_{1}^{T}\int M {\left\langle D \right\rangle}^{N} \check{K}_{\neq}^2 M {\left\langle D \right\rangle}^{N} |\partial_{X}| |\nabla_{L}|  \left(U_{\neq}^1 \partial_{X} U_{\neq}^{2}+U_{\neq}^2 \partial_{Y}^{L} U_{\neq}^{2}+ U_{\neq}^3 \partial_{Z} U_{\neq}^{2} \right)_{\neq} dVdt  \nonumber \\
\overset{\Delta}{=}& \mathcal{T}\check{K}_{\neq}^2(U_{\neq}^1, U_{\neq}^2)+\mathcal{T}\check{K}_{\neq}^2(U_{\neq}^2, U_{\neq}^2)+\mathcal{T}\check{K}_{\neq}^2(U_{\neq}^3, U_{\neq}^2).
\end{align}
For $\mathcal{T}\check{K}_{\neq}^2(U_{\neq}^1, U_{\neq}^2)$, we use \eqref{4..2}, \eqref{4..5}, \eqref{5.1} and Corollary \ref{cor4.1} to obtain 
\begin{align}\label{5.39}
\mathcal{T}\check{K}_{\neq}^2(U_{\neq}^1, U_{\neq}^2)= & \int_{1}^{T}\int M {\left\langle D \right\rangle}^{N} \check{K}_{\neq}^2 M {\left\langle D \right\rangle}^{N} |\partial_{X}| |\nabla_{L}|  \Big( |\nabla_{X, Z}|^{-1} |\nabla_{L}|^{-1} \check{K}_{\neq}^1 \nonumber \\
& \quad \partial_{X} |\partial_{X}|^{-1} |\nabla_{L}|^{-1} \check{K}_{\neq}^{2} \Big)_{\neq} dVdt  \nonumber \\
\leqslant&  \left\|  M \nabla_{L} \check{K}_{\neq}^2 \right\|_{L^2 H^N} \left\|  M \check{K}_{\neq}^1 \right\|_{L^{\infty} H^{N}} \left\| M \check{K}_{\neq}^2 \right\|_{L^2 H^N}  \nonumber \\
\lesssim & \varepsilon \nu^{-\frac{1}{2}}  \varepsilon \varepsilon \nu^{-\frac{1}{6}} \nonumber \\
\lesssim & \varepsilon^2\left(  \varepsilon \nu^{-\frac{2}{3}}\right).
\end{align}
Then, by \eqref{4..2}, \eqref{4..5}, \eqref{5.1} and Corollary \ref{cor4.1}, we can estimate $\mathcal{T}\check{K}_{\neq}^2(U_{\neq}^2, U_{\neq}^2)$ as follows
\begin{align}\label{5..40}
\mathcal{T}\check{K}_{\neq}^2(U_{\neq}^2, U_{\neq}^2) = & \int_{1}^{T}\int M {\left\langle D \right\rangle}^{N} \check{K}_{\neq}^2 M {\left\langle D \right\rangle}^{N} |\partial_{X}| |\nabla_{L}|  \Big( |\partial_{X}|^{-1} |\nabla_{L}|^{-1} \check{K}_{\neq}^2 \nonumber \\
& \quad \partial_{Y}^{L} |\partial_{X}|^{-1} |\nabla_{L}|^{-1} \check{K}_{\neq}^{2} \Big)_{\neq} dVdt  \nonumber \\
\leqslant&  \left\|  M \nabla_{L} \check{K}_{\neq}^2 \right\|_{L^2 H^N} \left\|  M \check{K}_{\neq}^2 \right\|_{L^{\infty} H^{N}} \left\| M \check{K}_{\neq}^2 \right\|_{L^2 H^N}  \nonumber \\
\lesssim & \varepsilon \nu^{-\frac{1}{2}}  \varepsilon \varepsilon \nu^{-\frac{1}{6}} \nonumber \\
\lesssim & \varepsilon^2\left(  \varepsilon \nu^{-\frac{2}{3}}\right) \nonumber \\
\lesssim &\varepsilon^2.
\end{align}
Similarly, by  \eqref{4..5}, \eqref{4..24}, \eqref{5.1} and Corollary \ref{cor4.1}, it holds
\begin{align}\label{5..41}
\mathcal{T}\check{K}_{\neq}^2(U_{\neq}^3, U_{\neq}^2) =& - \int_{1}^{T}\int M {\left\langle D \right\rangle}^{N} \check{K}_{\neq}^2 M {\left\langle D \right\rangle}^{N} |\partial_{X}| |\nabla_{L}|  \Big( U_{\neq}^3 \partial_{Z} |\partial_{X}|^{-1} |\nabla_{L}|^{-1} \check{K}_{\neq}^2 \Big)_{\neq} dVdt \nonumber \\
\leqslant & \left\|  \nabla_{L} M \check{K}_{\neq}^2  \right\|_{L^2 H^N} \left\| \nabla_{L} U_{\neq}^{3}  \right\|_{L^{2} H^N} \left\| M \check{K}_{\neq}^2  \right\|_{L^{\infty} H^N} \nonumber \\
\quad& +\left\|  \nabla_{L} M \check{K}_{\neq}^2  \right\|_{L^2 H^N} \left\|   U_{\neq}^{3}  \right\|_{L^{2} H^N} \left\| M \check{K}_{\neq}^2  \right\|_{L^{\infty} H^N}  \nonumber \\
\lesssim & \varepsilon \nu^{-\frac{1}{2}} \left( C_0 \varepsilon  \nu^{-\frac{1}{3}} \nu^{-\frac{1}{6}} \varepsilon + C_0 \varepsilon  \nu^{-\frac{1}{3}} \nu^{-\frac{1}{2}} \varepsilon \right) \nonumber \\
\lesssim & \varepsilon^2 \left( C_0 \varepsilon  \nu^{-1}+ C_0 \varepsilon  \nu^{-\frac{4}{3}}\right) \nonumber \\
\lesssim &\varepsilon^2.
\end{align}

For the nonlinear  pressure term $\mathcal{NLP}\check{K}_{\neq}^2$. We divide $\mathcal{NLP}\check{K}_{\neq}^2$ into
\begin{align}\label{5.34}
\mathcal{NLP}\check{K}_{\neq}^2=& - \int_{1}^{T}\int M {\left\langle D \right\rangle}^{N} \check{K}_{\neq}^2 M {\left\langle D \right\rangle}^{N}|\partial_{X}| |\nabla_{L}|\partial_{Y}^{L} |\nabla_{L}|^{-2}\left(\partial_{i}^{L} U^{j} \partial_{j}^{L} U^{i}\right)_{\neq} dVdt \nonumber \\
=&- \int_{1}^{T}\int M {\left\langle D \right\rangle}^{N} \check{K}_{\neq}^2 M {\left\langle D \right\rangle}^{N}|\partial_{X}| |\nabla_{L}|\partial_{Y}^{L} |\nabla_{L}|^{-2}\Big[\partial_{i} U_0^{j} \partial_{j}^{L} U_{\neq}^{i} \nonumber \\
&+ \partial_{i}^{L} U_{\neq}^{j} \partial_{j} U_0^{i}+\partial_{i}^{L} U_{\neq}^{j} \partial_{j}^{L} U_{\neq}^{i}\Big]_{\neq} dVdt \nonumber \\
\overset{\Delta}{=}& \mathcal{NLP}\check{K}_{\neq}^2 (U_0, U_{\neq})+\mathcal{NLP}\check{K}_{\neq}^2 (U_{\neq}, U_{0})+\mathcal{NLP}\check{K}_{\neq}^2 (U_{\neq}, U_{\neq}).
\end{align}
We consider $\mathcal{NLP}\check{K}_{\neq}^2 (U_0, U_{\neq})$. By  \eqref{4..5}, \eqref{4..8}, \eqref{4..16}--\eqref{4..18},  \eqref{5.1}, the fact $\big| \widehat{\partial_{X} U_{\neq}^3 }\big| \lesssim \big| \widehat{m M Q_{\neq}^3} \big|$ and Corollary \ref{cor4.1}, it yields
\begin{align}\label{5.43}
\mathcal{NLP}\check{K}_{\neq}^2 (U_0, U_{\neq}) =& -\int_{1}^{T}\int M {\left\langle D \right\rangle}^{N} \check{K}_{\neq}^2 M {\left\langle D \right\rangle}^{N}|\partial_{X}| |\nabla_{L}|\partial_{Y}^{L} |\nabla_{L}|^{-2} \partial_{j}^{L}\left(\partial_{i} U_0^{j}  U_{\neq}^{i} \right)_{\neq}dVdt \cdot 1_{i \neq 1} \nonumber \\
= & \int_{1}^{T}\int M {\left\langle D \right\rangle}^{N} |\nabla_{L}| \check{K}_{\neq}^2 M {\left\langle D \right\rangle}^{N} \partial_{Y}^{L} |\nabla_{L}|^{-2} \partial_{j}^{L}\Big(\partial_{Y} U_0^{1} |\partial_{X}| U_{\neq}^{2} \nonumber \\
& \quad + \partial_{Y} U_0^{2} |\partial_{X}| U_{\neq}^{2}+\partial_{Y} U_0^{3} |\partial_{X}| U_{\neq}^{2}+\partial_{Z} U_0^{1} |\partial_{X}| U_{\neq}^{3} \nonumber \\
& \quad +\partial_{Z} U_0^{2} |\partial_{X}| U_{\neq}^{3}+\partial_{Z} U_0^{3} |\partial_{X}| U_{\neq}^{3}\Big)_{\neq}dVdt   \nonumber \\
\leqslant & \left\|  \nabla_{L} M \check{K}_{\neq}^2  \right\|_{L^2 H^N}\left(\left\| \nabla U_{0}^1  \right\|_{L^{\infty} H^N}+ \left\| \nabla U_{0}^2  \right\|_{L^{\infty} H^N}+ \left\| \nabla U_{0}^3  \right\|_{L^{\infty} H^N}\right)   \nonumber \\
\quad&  \times \left( \left\| M \check{K}_{\neq}^2  \right\|_{L^{2} H^N}  + \left\| m M  Q_{\neq}^{3}  \right\|_{L^{2} H^N} \right) \nonumber \\
\lesssim & \varepsilon \nu^{-\frac{1}{2}} \left( \varepsilon + C_1 \varepsilon \nu^{-1} + C_0 \varepsilon \nu^{-1} \right) \left(\varepsilon \nu^{-\frac{1}{6}}+ C_0 \varepsilon \nu^{-\frac{1}{3}} \nu^{-\frac{1}{6}} \right) \nonumber \\
\lesssim & \varepsilon^2 \left(\varepsilon \nu^{-\frac{2}{3}}+C_0 \varepsilon \nu^{-\frac{5}{3}}+C_0 \varepsilon \nu^{-1}+  C_0^2 \varepsilon   \nu^{-2}\right)  \nonumber \\
\lesssim &\varepsilon^2,
\end{align}
which suffices for $C_0^2 \varepsilon  \nu^{-2}$ sufficiently small. 
Similarly, we can obtain 
\begin{align}\label{5..44}
\mathcal{NLP}\check{K}_{\neq}^2 (U_{\neq}, U_{0}) \lesssim \varepsilon^2.
\end{align}
For $\mathcal{NLP}\check{K}_{\neq}^2 (U_{\neq}, U_{\neq})$, we use the fact $\big| \widehat{\partial_{X} U_{\neq}^3 }\big| \lesssim \big| \widehat{m M Q_{\neq}^3} \big|$, \eqref{HSX110}, \eqref{4..2}--\eqref{4..8} and \eqref{4..22}--\eqref{4..24} to get
\begin{align}\label{5.45}
\mathcal{NLP}\check{K}_{\neq}^2 (U_{\neq}, U_{\neq})=& \int_{1}^{T}\int M {\left\langle D \right\rangle}^{N} |\nabla_{L}| \check{K}_{\neq}^2 M {\left\langle D \right\rangle}^{N}|\partial_{X}| \partial_{Y}^{L} |\nabla_{L}|^{-2} \partial_{i}^{L} \partial_{j}^{L}\left( U_{\neq}^{j}  U_{\neq}^{i}\right)_{\neq} dVdt  \nonumber \\
\leqslant & \, 2 \left\|  M \nabla_{L} \check{K}_{\neq}^2 \right\|_{L^{2} H^N} \big( \left\|  |\partial_{X}| \partial_{Y}^{L} U_{\neq}^{j} \right\|_{L^2 H^N}  \left\|   U_{\neq}^i \right\|_{L^{\infty} H^N} \nonumber\\
\quad &+  \left\| |\partial_{X}|  U_{\neq}^{j} \right\|_{L^{\infty} H^N}  \left\| \partial_{Y}^{L}  U_{\neq}^i \right\|_{L^2 H^N} \big) \nonumber \\
\leqslant & \, 2 \left\|  M \nabla_{L} \check{K}_{\neq}^1 \right\|_{L^{2} H^N} \Big[ \Big( \left\| -|\partial_{X}| \partial_{Y}^{L} |\nabla_{X, Z}|^{-1} |\nabla_{L}|^{-1} \check{K}_{\neq}^{1} \right\|_{L^{2} H^N}  \nonumber \\
\quad &+\left\| - |\partial_{X}| \partial_{Y}^{L} |\partial_{X}|^{-1} |\nabla_{L}|^{-1} \check{K}_{\neq}^{2} \right\|_{L^{2} H^N}+ \left\| m M \nabla_{L} Q_{\neq}^{3} \right\|_{L^{2} H^N} \Big) \nonumber \\
\quad  & \times\left(\left\|  U_{\neq}^{1, 2} \right\|_{L^{\infty} H^N}+ \left\|    U_{\neq}^{3} \right\|_{L^{\infty} H^N} \right) \nonumber \\
\quad& + \Big( \left\| -\partial_{Y}^{L} |\nabla_{X, Z}|^{-1} |\nabla_{L}|^{-1} \check{K}_{\neq}^{1} \right\|_{L^{2} H^N}  \nonumber \\
\quad &+\left\| - \partial_{Y}^{L} |\partial_{X}|^{-1} |\nabla_{L}|^{-1} \check{K}_{\neq}^{2} \right\|_{L^{2} H^N}+ \left\| \nabla_{L} U_{\neq}^{3} \right\|_{L^{2} H^N} \Big) \nonumber \\
\quad  & \times\left(\left\| M\check{K}_{\neq}^{1, 2} \right\|_{L^{\infty} H^N}+ \left\|    m M Q_{\neq}^{3} \right\|_{L^{\infty} H^N} \right) \Big] \nonumber \\
\leqslant & \, 2 \left\|  M \nabla_{L} \check{K}_{\neq}^1 \right\|_{L^{2} H^N} \Big( \left\| M \check{K}_{\neq}^{1, 2} \right\|_{L^2 H^N} + \left\| m M \nabla_{L} Q_{\neq}^{3} \right\|_{L^2 H^N} + \left\| \nabla_{L} U_{\neq}^{3} \right\|_{L^2 H^N} \Big) \nonumber \\
\quad&  \times \left(\left\|    U_{\neq}^{1, 2} \right\|_{L^{\infty} H^N}+ \left\|  U_{\neq}^{3} \right\|_{L^{\infty} H^N} + \left\| M\check{K}_{\neq}^{1, 2} \right\|_{L^{\infty} H^N} + \left\| m M Q_{\neq}^3 \right\|_{L^{\infty} H^N}\right) \nonumber \\
\lesssim & \varepsilon \nu^{-\frac{1}{2}} \left( \varepsilon \nu^{-\frac{1}{6}} +C_0 \varepsilon  \nu^{-\frac{1}{3}} \nu^{-\frac{1}{2}}\right) \left( \varepsilon +C_0 \varepsilon  \nu^{-\frac{1}{3}} \right)  \nonumber \\
\lesssim & \varepsilon^2 \left(\varepsilon \nu^{-\frac{2}{3}} +C_0 \varepsilon  \nu^{-\frac{4}{3}}+ +C_0^2 \varepsilon  \nu^{-\frac{5}{3}}\right)  \nonumber \\
\lesssim &\varepsilon^2,
\end{align}
where   $C_0^2 \varepsilon  \nu^{-\frac{5}{3}}$ is sufficiently small. Submitting the estimates  \eqref{5.6}, \eqref{5.10}, \eqref{5.13}, \eqref{5.25}--\eqref{5.27}, \eqref{5.33}, \eqref{5.37}--\eqref{5.28} and \eqref{5.34} into \eqref{5.4}, \eqref{4..2} and \eqref{4..5} hold with $8$ replaced by $4$ on the right-hand side.

\subsection{Energy estimates on $Q_0^1$}\label{4.3.2}
%\textit{5.2. Energy estimates on $Q_0^1$}?~~~
We recall  $Q_0^1$ satisfy the  equation
\begin{align*}
\partial_{t} Q_0^{1}-\nu \Delta Q_0^{1}=-\left(U \cdot \nabla_{L} Q^{1} \right)_0- \left(Q^{j} \partial_{j}^{L} U^{1}\right)_0-2 \left(\partial_{i}^{L} U^{j} \partial_{i j}^{L} U^{1} \right)_0.
\end{align*}
The $H^N$ energy estimate of $Q_0^1$ gives
\begin{align}\label{5.40}
\frac{1}{2} \left\|  Q_0^1 (T)  \right\|_{H^N}^2+\nu \left\|   \nabla Q_0^1 \right\|_{L^2 H^N}^2 &= \frac{1}{2} \left\|  Q_0^1 (1)  \right\|_{H^N}^2- \int_{1}^{T} \int  {\left\langle D \right\rangle}^{N}  Q_0^1  {\left\langle D \right\rangle}^{N} \Big[  \left(U \cdot \nabla_{L} Q^{1} \right)_0 \nonumber \\
&\quad + \left(Q^{j} \partial_{j}^{L} U^{1}\right)_0+2 \left(\partial_{i}^{L} U^{j} \partial_{i j}^{L} U^{1} \right)_0   \Big]dVdt  \nonumber \\
&\overset{\Delta}{=}\frac{1}{2} \left\|  Q_0^1 (1) \right\|_{H^N}^2+ \mathcal{T}+\mathcal{NLS}1+\mathcal{NLS}2.
\end{align}
Then  we need to estimate the transport term $\mathcal{T}$, the nonlinear stretching $\mathcal{NLS}1$, $\mathcal{NLS}2$, respectively. Note that
\begin{align}\label{5.41}
\mathcal{T}= - \int_{1}^{T} \int  {\left\langle D \right\rangle}^{N}  Q_0^1  {\left\langle D \right\rangle}^{N} \Big[ U_0 \cdot \nabla Q_0^{1} +\left(U_{\neq} \cdot \nabla_{L} Q_{\neq}^{1} \right)_0 \Big]dVdt \overset{\Delta}{=} \mathcal{T}_1+\mathcal{T}_2.
\end{align}
For $\mathcal{T}_1$, by \eqref{4..1}, \eqref{4..16}--\eqref{4..18}, it yields
\begin{align}
\mathcal{T}_1 = &- \int_{1}^{T} \int  {\left\langle D \right\rangle}^{N}  Q_0^1  {\left\langle D \right\rangle}^{N} \left( U_0^2 \partial_{Y} Q_0^{1} +U_0^3 \partial_{Z} Q_0^{1} \right)dVdt  \nonumber \\
\leqslant & \left\| \nabla U_0^1 \right\|_{L^2 H^{N+1}} \left\| U_0^{2, 3} \right\|_{L^{\infty} H^N} \left\| \nabla Q_0^1 \right\|_{L^2 H^N}  \nonumber \\
\lesssim & \varepsilon \nu^{-\frac{1}{2}}  \left( C_1 \varepsilon  \nu^{-1} + C_0 \varepsilon  \nu^{-1} \right) \varepsilon \nu^{-\frac{1}{2}}  \nonumber \\
\lesssim & \varepsilon^2 \left( C_1 \varepsilon  \nu^{-2} + C_0 \varepsilon  \nu^{-2}      \right) \nonumber\\
\lesssim & \varepsilon^2,
\end{align}
where $\varepsilon  \nu^{-2}$ is sufficiently small. For $\mathcal{T}_2$, we use the fact $\nabla_{L} \cdot U_{\neq}=0$, $Q_{\neq}^1= -|\nabla_{X, Z}|^{-1} |\nabla_{L}| \check{K}_{\neq}^1$, \eqref{4..2}, \eqref{4..1} and \eqref{4..22}--\eqref{4..24}    to get
\begin{align}
\mathcal{T}_2 =& \int_{1}^{T} \int  {\left\langle D \right\rangle}^{N} \nabla Q_0^1  {\left\langle D \right\rangle}^{N} \left(U_{\neq} Q_{\neq}^{1} \right)_0 dVdt  \nonumber \\
\leqslant & \left\| \nabla Q_0^1 \right\|_{L^{2} H^N} \left\| U_{\neq} \right\|_{L^{\infty} H^N} \left\|  - |\nabla_{X, Z}|^{-1} |\nabla_{L}| \check{K}_{\neq}^1 \right\|_{L^2 H^N}  \nonumber \\
\leqslant & \left\| \nabla Q_0^1 \right\|_{L^{2} H^N} \left\| U_{\neq} \right\|_{L^{\infty} H^N} \left\|  M \nabla_{L} \check{K}_{\neq}^1 \right\|_{L^2 H^N}   \nonumber \\
\lesssim & \varepsilon \nu^{-\frac{1}{2}}  \left( \varepsilon  + C_0 \varepsilon  \nu^{-\frac{1}{3}}  \right) \varepsilon \nu^{-\frac{1}{2}}  \nonumber \\
\lesssim & \varepsilon^2 \left( \varepsilon \nu^{-1}  + C_0 \varepsilon  \nu^{-\frac{4}{3}}      \right) \nonumber\\
\lesssim & \varepsilon^2.
\end{align}

To estimate $\mathcal{NLS}1$, we divide it into
\begin{align}\label{5.44}
\mathcal{NLS}1&=- \int_{1}^{T} \int  {\left\langle D \right\rangle}^{N}  Q_0^1  {\left\langle D \right\rangle}^{N} \Big[  Q_0^{j} \partial_{j} U_0^{1}+ \left(Q_{\neq}^{j} \partial_{j}^{L} U_{\neq}^{1}\right)_0  \Big]dVdt \nonumber \\
&\overset{\Delta}{=} \mathcal{NLS}1 (0, 0)+\mathcal{NLS}1(\neq, \neq).
\end{align}
For $\mathcal{NLS}1 (0, 0)$, using \eqref{4..1} and \eqref{4..16}--\eqref{4..18}   yields
\begin{align*}
\mathcal{NLS}1 (0, 0)=& - \int_{1}^{T} \int  {\left\langle D \right\rangle}^{N}  Q_0^1  {\left\langle D \right\rangle}^{N} \left( Q_0^2 \partial_{Y} U_0^{1} +Q_0^3 \partial_{Z} U_0^{1} \right)dVdt \nonumber\\
\leqslant & \left\| Q_0^1  \right\|_{L^{\infty} H^N} \left\| \nabla  U_0^{2, 3}  \right\|_{L^2 H^{N+1}} \left\|   \nabla U_0^1 \right\|_{L^2 H^N}  \nonumber \\
\lesssim &  \varepsilon \left(C_1 \varepsilon  \nu^{-1} \nu^{-\frac{1}{2}}+ C_0 \varepsilon  \nu^{-1} \nu^{-\frac{1}{2}} \right) \varepsilon \nu^{-\frac{1}{2}} \nonumber \\
\lesssim & \varepsilon^2 \left( C_1 \varepsilon  \nu^{-2} + C_0 \varepsilon  \nu^{-2}      \right) \nonumber\\
\lesssim & \varepsilon^2.
\end{align*} 
For $\mathcal{NLS}1(\neq, \neq)$, by $Q_{\neq}^1= -|\nabla_{X, Z}|^{-1} |\nabla_{L}| \check{K}_{\neq}^1$, $Q_{\neq}^2= -|\partial_{X}|^{-1} |\nabla_{L}| \check{K}_{\neq}^2$, the lower bound \eqref{4.11} of $m$, \eqref{4..2}--\eqref{4..5}, \eqref{4..1} and \eqref{4..22}--\eqref{4..23}, we can deduce
\begin{align*}
\mathcal{NLS}1(\neq, \neq) \leqslant & \left\| Q_0^1 \right\|_{L^{\infty} H^N} \left\| Q_{\neq}^j \right\|_{L^2 H^N} \left\| \nabla_{L} U_{\neq}^1  \right\|_{L^2 H^N}   \nonumber \\
\leqslant & \left\| Q_0^1 \right\|_{L^{\infty} H^N} \left( \left\| M \nabla_{L} \check{K}_{\neq}^{1, 2} \right\|_{L^2 H^N} + \left\| m M Q_{\neq}^3 \right\|_{L^2 H^N} \sup m^{-1}  \right) \left\| \nabla_{L} U_{\neq}^1  \right\|_{L^2 H^N} \nonumber \\
\lesssim &  \varepsilon  \left( \varepsilon  \nu^{-\frac{1}{2}}+ C_0 \varepsilon  \nu^{-\frac{1}{3}}  \nu^{-\frac{1}{6}}   \nu^{-\frac{2}{3}}\right) \varepsilon \nu^{-\frac{1}{2}}  \nonumber \\
\lesssim & \varepsilon^2 \left( \varepsilon \nu^{-1} + C_0 \varepsilon  \nu^{-\frac{5}{3}}      \right) \nonumber\\
\lesssim & \varepsilon^2,
\end{align*}
which suffices for  $C_0 \varepsilon  \nu^{-\frac{5}{3}}$ sufficiently small.

To estimate $\mathcal{NLS}2$, we decompose it into
\begin{align}\label{5.47}
\mathcal{NLS}2=& - 2 \int_{1}^{T} \int  {\left\langle D \right\rangle}^{N}  Q_0^1  {\left\langle D \right\rangle}^{N} \Big[   \partial_{i} U_0^{j} \partial_{i j} U_0^{1} + \left(\partial_{i}^{L} U_{\neq}^{j} \partial_{i j}^{L} U_{\neq}^{1} \right)_0   \Big]dVdt \nonumber\\
\overset{\Delta}{=} & \mathcal{NLS}2(0, 0)+\mathcal{NLS}2 (\neq, \neq).
\end{align}
On the one hand,  we have
\begin{align}
\mathcal{NLS}2(0, 0) \leqslant & \left\| Q_0^1    \right\|_{L^{\infty} H^N} \left\|   \nabla U_0^{2, 3} \right\|_{L^2 H^N} \left\| \nabla U_0^1 \right\|_{L^2 H^{N+1}} \nonumber \\
\lesssim & \varepsilon \left( C_1 \varepsilon  \nu^{-1} \nu^{-\frac{1}{2}} + C_0 \varepsilon  \nu^{-1} \nu^{-\frac{1}{2}} \right) \varepsilon \nu^{-\frac{1}{2}}  \nonumber \\
\lesssim &  \varepsilon^2 \left( C_0 \varepsilon  \nu^{-2}  \right) \nonumber \\
\lesssim &\varepsilon^2.
\end{align}
On the other hand, applying $U_{\neq}^1= -|\nabla_{X, Z}|^{-1} |\nabla_{L}|^{-1} \check{K}_{\neq}^1$, \eqref{4..2}, \eqref{4..1} and \eqref{4..22}--\eqref{4..24} yields
\begin{align*}
\mathcal{NLS}2 (\neq, \neq) \leqslant & \left\|  Q_0^1 \right\|_{L^{\infty} H^N} \left( \left\| \nabla_{L} U_{\neq}^{1, 2} \right\|_{L^2 H^N}+ \left\| \nabla_{L} U_{\neq}^3 \right\|_{L^2 H^N}\right) \left\|  -\partial_{i j}^{L} |\nabla_{X, Z}|^{-1} |\nabla_{L}|^{-1} \check{K}_{\neq}^1 \right\|_{L^2 H^N}  \nonumber \\
\leqslant & \left\|  Q_0^1 \right\|_{L^{\infty} H^N} \left( \left\| \nabla_{L} U_{\neq}^{1, 2} \right\|_{L^2 H^N}+ \left\| \nabla_{L} U_{\neq}^3 \right\|_{L^2 H^N}\right) \left\|  M \nabla_{L} \check{K}_{\neq}^1 \right\|_{L^2 H^N}  \nonumber \\
\lesssim &  \varepsilon \left( \varepsilon \nu^{-\frac{1}{2}}+ C_0 \varepsilon  \nu^{-\frac{1}{3}}  \nu^{-\frac{1}{2}} \right) \varepsilon \nu^{-\frac{1}{2}}  \nonumber \\
\lesssim &  \varepsilon^2 \left( \varepsilon \nu^{-1} + C_0 \varepsilon  \nu^{-\frac{4}{3}}  \right)  \nonumber \\
\lesssim & \varepsilon^2.
\end{align*}
Putting the estimates (\ref{5.41}), (\ref{5.44}) and (\ref{5.47}) into (\ref{5.40}), we finish the proof of \eqref{4..1}, with $8$ replaced by $4$.
\subsection{Energy estimates on  $Q_0^2$}\label{4.3.3}
Note that $Q_0^2$ satisfies
\begin{align*}
&\partial_{t} Q_0^{2}-\nu \Delta Q_0^{2}+ Q_0^1- \partial_{Y Y} U_0^{1}  \nonumber \\
&\quad=-\left(U \cdot \nabla_{L} Q^{2} \right)_0-\left(Q^{j} \partial_{j}^{L} U^{2}\right)_0-2 \left(\partial_{i}^{L} U^{j} \partial_{i j}^{L} U^{2}\right)_0+\partial_{Y} \left(\partial_{i}^{L} U^{j} \partial_{j}^{L} U^{i}\right)_0.
\end{align*}
Now we continue to estimate the $H^N$-norm for $Q_0^2$. Unlike $Q_0^1$, here we need additional estimates of the lift up term $\mathcal{LU}2$ and the nonlinear pressure term $\mathcal{NLP}$. An energy estimate gives
\begin{align}\label{5.51}
\frac{1}{2} \left\|  Q_0^2 (T)  \right\|_{H^N}^2+\nu \left\|   \nabla Q_0^2 \right\|_{L^2 H^N}^2 &= \frac{1}{2} \left\|  Q_0^2 (1)  \right\|_{H^N}^2- \int_{1}^{T} \int  {\left\langle D \right\rangle}^{N}  Q_0^2  {\left\langle D \right\rangle}^{N} \Big[ \left(  Q_0^1- \partial_{Y Y} U_0^{1} \right) \nonumber \\
&\quad + \left(U \cdot \nabla_{L} Q^2 \right)_0+ \left(Q^{j} \partial_{j}^{L} U^2\right)_0+2 \left(\partial_{i}^{L} U^{j} \partial_{i j}^{L} U^2 \right)_0    \nonumber \\
& \quad  -\partial_{Y} \left(\partial_{i}^{L} U^{j} \partial_{j}^{L} U^{i}\right)_0 \Big]dVdt  \nonumber \\
&\overset{\Delta}{=}\frac{1}{2} \left\|  Q_0^2 (1) \right\|_{H^N}^2+ \mathcal{LU}2+\mathcal{T}+\mathcal{NLS}1+\mathcal{NLS}2+\mathcal{NLP}.
\end{align}

For  the lift up term $\mathcal{LU}2$, by \eqref{4..16}--\eqref{4..17}, it holds
\begin{align}\label{5.52}
\mathcal{LU}2 =& - \int_{1}^{T} \int  {\left\langle D \right\rangle}^{N}  Q_0^2  {\left\langle D \right\rangle}^{N} \left(  Q_0^1- \partial_{Y Y} U_0^{1} \right) dVdt \nonumber \\
\leqslant &  \int_{1}^{T} \int (1+|k, \eta, l|^2)^{\frac{N}{2}} \widehat{Q_0^2} \, (1+|k, \eta, l|^2)^{\frac{N}{2}} \frac{k^2+l^2}{k^2+\eta^2+l^2} \widehat{Q_0^1} d \xi dt \nonumber \\
\leqslant &  \int_{1}^{T} \int {\left\langle D \right\rangle}^{N} Q_0^2 \cdot {\left\langle D \right\rangle}^{N} Q_0^1 dVdt \nonumber \\
\leqslant &  \left\|  \nabla U_0^2 \right\|_{L^2 H^{N+1}} \left\|  \nabla U_0^1 \right\|_{L^2 H^{N+1}} \nonumber \\
\lesssim &  C_1 \varepsilon  \nu^{-1} \nu^{-\frac{1}{2}} \varepsilon \nu^{-\frac{1}{2}}  \nonumber \\
\lesssim & \left( C_1 \varepsilon  \nu^{-1}\right)^2 \left(   \frac{1}{C_1}\right)   \nonumber \\
\lesssim & \left( C_1 \varepsilon  \nu^{-1}\right)^2, 
\end{align}
which suffices for $C_1$ sufficiently large. Then,  we divide the transport term $\mathcal{T}$ into
\begin{align}\label{5.53}
\mathcal{T}= - \int_{1}^{T} \int  {\left\langle D \right\rangle}^{N}  Q_0^2  {\left\langle D \right\rangle}^{N} \Big[ U_0 \cdot \nabla Q_0^{2} +\left(U_{\neq} \cdot \nabla_{L} Q_{\neq}^{2} \right)_0 \Big]dVdt \overset{\Delta}{=} \mathcal{T}_1+\mathcal{T}_2.
\end{align}
For $\mathcal{T}_1$, using  \eqref{4..4}, \eqref{4..17} and \eqref{4..18} yields
\begin{align}
\mathcal{T}_1 = &- \int_{1}^{T} \int  {\left\langle D \right\rangle}^{N}  Q_0^2  {\left\langle D \right\rangle}^{N} \left( U_0^2 \partial_{Y} Q_0^2 +U_0^3 \partial_{Z} Q_0^2 \right)dVdt  \nonumber \\
\leqslant & \left\| \nabla U_0^2 \right\|_{L^2 H^{N+1}} \left\| U_0^{2, 3} \right\|_{L^{\infty} H^N} \left\| \nabla Q_0^2 \right\|_{L^2 H^N}  \nonumber \\
\lesssim &  C_1 \varepsilon  \nu^{-1} \nu^{-\frac{1}{2}}  \left( C_1 \varepsilon  \nu^{-1} + C_0 \varepsilon  \nu^{-1} \right) C_1 \varepsilon  \nu^{-1} \nu^{-\frac{1}{2}}  \nonumber \\
\lesssim & \left( C_1 \varepsilon  \nu^{-1}\right)^2 \left( C_1 \varepsilon  \nu^{-2} + C_0 \varepsilon  \nu^{-2}      \right) \nonumber\\
\lesssim & \left( C_1 \varepsilon  \nu^{-1}\right)^2.
\end{align}
Notice that the fact $\nabla_{L} U_{\neq}=0$. For $\mathcal{T}_2$, by $Q_{\neq}^2= -|\partial_{X}|^{-1} |\nabla_{L}| \check{K}_{\neq}^2$,  \eqref{4..5}, \eqref{4..4} and \eqref{4..22}--\eqref{4..24}, we deduce
\begin{align*}
\mathcal{T}_2 =&  \int_{1}^{T} \int  {\left\langle D \right\rangle}^{N} \nabla Q_0^2  {\left\langle D \right\rangle}^{N} \left(U_{\neq} Q_{\neq}^{2} \right)_0 dVdt   \nonumber \\
\leqslant & \left\| \nabla Q_0^2 \right\|_{L^2 H^{N}} \left\| U_{\neq} \right\|_{L^{\infty} H^N} \left\| M \nabla_{L} \check{K}_{\neq}^2 \right\|_{L^2 H^N}  \nonumber \\
\lesssim & C_1 \varepsilon  \nu^{-1} \nu^{-\frac{1}{2}}   \left( \varepsilon  + C_0 \varepsilon  \nu^{-\frac{1}{3}}   \right) \varepsilon \nu^{-\frac{1}{2}}  \nonumber \\
\lesssim & \left( C_1 \varepsilon  \nu^{-1}\right)^2 \left(\frac{1}{C_1} \varepsilon   +\frac{C_0}{C_1} \varepsilon  \nu^{-\frac{1}{3}}      \right) \nonumber\\
\lesssim & \left( C_1 \varepsilon  \nu^{-1}\right)^2.
\end{align*}

We estimate the nonlinear stretching term $\mathcal{NLS}1$ as follows
\begin{align}\label{5.56}
\mathcal{NLS}1&=- \int_{1}^{T} \int  {\left\langle D \right\rangle}^{N}  Q_0^2  {\left\langle D \right\rangle}^{N} \Big[  Q_0^{j} \partial_{j} U_0^2+ \left(Q_{\neq}^{j} \partial_{j}^{L} U_{\neq}^2\right)_0  \Big]dVdt \nonumber \\
&\overset{\Delta}{=} \mathcal{NLS}1 (0, 0)+\mathcal{NLS}1(\neq, \neq).
\end{align}
For $\mathcal{NLS}1 (0, 0)$, notice that $\partial_{j} U_0^2=0$ if $j=1$.  Using \eqref{4..4} and \eqref{4..17}--\eqref{4..18} yields
\begin{align*}
\mathcal{NLS}1 (0, 0) \leqslant & \left\| Q_0^2  \right\|_{L^{\infty} H^N} \left\| Q_0^j  \right\|_{L^2 H^N} \left\|   \nabla U_0^2 \right\|_{L^2 H^N} \cdot 1_{j \neq 1} \nonumber \\
\leqslant & \left\| Q_0^2  \right\|_{L^{\infty} H^N} \left\| \nabla  U_0^{2, 3}  \right\|_{L^2 H^{N+1}} \left\|   \nabla U_0^2 \right\|_{L^2 H^N}  \nonumber \\
\lesssim & C_1 \varepsilon  \nu^{-1} \left(C_1 \varepsilon  \nu^{-1} \nu^{-\frac{1}{2}}+ C_0 \varepsilon  \nu^{-1} \nu^{-\frac{1}{2}} \right) C_1 \varepsilon  \nu^{-1} \nu^{-\frac{1}{2}} \nonumber \\
\lesssim & \left( C_1 \varepsilon  \nu^{-1}\right)^2 \left( C_1 \varepsilon  \nu^{-2} + C_0 \varepsilon  \nu^{-2}      \right) \nonumber\\
\lesssim & \left( C_1 \varepsilon  \nu^{-1}\right)^2.
\end{align*}
To estimate $\mathcal{NLS}1(\neq, \neq)$, we use $Q_{\neq}^1= -|\nabla_{X, Z}|^{-1} |\nabla_{L}| \check{K}_{\neq}^1$, $Q_{\neq}^2= -|\partial_{X}|^{-1} |\nabla_{L}| \check{K}_{\neq}^2$, \eqref{4.11}, \eqref{4..2}--\eqref{4..5}, \eqref{4..4}, Corollary \ref{cor4.1} and \eqref{4..23}--\eqref{4..21} to get
\begin{align*}
\mathcal{NLS}1(\neq, \neq) \leqslant & \left\| Q_0^2 \right\|_{L^{\infty} H^N} \left\| Q_{\neq}^j \right\|_{L^2 H^N} \left\| \nabla_{L} U_{\neq}^2  \right\|_{L^2 H^N}   \nonumber \\
\leqslant & \left\| Q_0^2 \right\|_{L^{\infty} H^N} \left( \left\| M \nabla_{L} \check{K}_{\neq}^{1, 2} \right\|_{L^2 H^N} + \left\| m M Q_{\neq}^3 \right\|_{L^2 H^N} \sup m^{-1}  \right) \left\| \nabla_{L} U_{\neq}^2  \right\|_{L^2 H^N} \nonumber \\
\lesssim & C_1 \varepsilon  \nu^{-1} \left( \varepsilon \nu^{-\frac{1}{2}} + C_0 \varepsilon  \nu^{-\frac{1}{3}}  \nu^{-\frac{1}{6}}   \nu^{-\frac{2}{3}} \right) \varepsilon \nu^{-\frac{1}{2}}  \nonumber \\
\lesssim & \left( C_1 \varepsilon  \nu^{-1}\right)^2 \left( \frac{1}{C_1} \varepsilon + \frac{C_0}{C_1} \varepsilon  \nu^{-\frac{2}{3}}      \right) \nonumber\\
\lesssim & \left( C_1 \varepsilon  \nu^{-1}\right)^2,
\end{align*}
which suffices for $\frac{C_0}{C_1}\varepsilon \nu^{-\frac{2}{3}}$  sufficiently small. For the nonlinear stretching term $\mathcal{NLS}2$, we similarly divide it into
\begin{align}\label{5.59}
\mathcal{NLS}2&= - 2 \int_{1}^{T} \int  {\left\langle D \right\rangle}^{N}  Q_0^2  {\left\langle D \right\rangle}^{N} \Big[   \partial_{i} U_0^{j} \partial_{i j} U_0^2 + \left(\partial_{i}^{L} U_{\neq}^{j} \partial_{i j}^{L} U_{\neq}^2 \right)_0   \Big]dVdt \nonumber\\
&=2 \int_{1}^{T} \int  {\left\langle D \right\rangle}^{N}  \partial_{j} Q_0^2  {\left\langle D \right\rangle}^{N} \left(  \partial_{i} U_0^{j} \partial_{i} U_0^2\right)dVdt - 2 \int_{1}^{T} \int  {\left\langle D \right\rangle}^{N}  Q_0^2  {\left\langle D \right\rangle}^{N}  \left(\partial_{i}^{L} U_{\neq}^{j} \partial_{i j}^{L} U_{\neq}^2 \right)_0  dVdt \nonumber\\
&\overset{\Delta}{=} \mathcal{NLS}2(0, 0)+\mathcal{NLS}2 (\neq, \neq).
\end{align}
We consider $\mathcal{NLS}2(0, 0)$. By \eqref{4..4} and \eqref{4..17}--\eqref{4..18}, it holds
\begin{align*}
\mathcal{NLS}2(0, 0) \leqslant & \left\| \nabla Q_0^2    \right\|_{L^2 H^N} \left\|   \nabla U_0^j  \right\|_{L^{\infty} H^N} \left\| \nabla U_0^2 \right\|_{L^2 H^N}  \cdot 1_{i, j \neq 1}\nonumber \\
\leqslant & \left\| \nabla Q_0^2    \right\|_{L^2 H^N} \left\|   U_0^{2, 3} \right\|_{L^{\infty} H^{N+1}} \left\| \nabla U_0^2 \right\|_{L^2 H^N} \nonumber \\
\lesssim & C_1 \varepsilon  \nu^{-1} \nu^{-\frac{1}{2}}  \left( C_1 \varepsilon  \nu^{-1}  + C_0 \varepsilon  \nu^{-1}  \right) C_1 \varepsilon  \nu^{-1} \nu^{-\frac{1}{2}}   \nonumber \\
\lesssim &  \left( C_1 \varepsilon  \nu^{-1}\right)^2 \left( C_0 \varepsilon  \nu^{-2}  \right) \nonumber \\
\lesssim & \left( C_1 \varepsilon \nu^{-1}\right)^2,
\end{align*}
which suffices for $\varepsilon  \nu^{-2}$  sufficiently small. For $\mathcal{NLS}2 (\neq, \neq)$, we use  Corollary \ref{cor4.1}, \eqref{4..4}, \eqref{4..2}--\eqref{4..5}, \eqref{4..24} and  \eqref{5.1}  to deduce
\begin{align*}
\mathcal{NLS}2 (\neq, \neq) \leqslant & \left\|  Q_0^2 \right\|_{L^{\infty} H^N} \left( \left\| \nabla_{L} U_{\neq}^{1, 2} \right\|_{L^2 H^N}+ \left\| \nabla_{L} U_{\neq}^3 \right\|_{L^2 H^N}\right) \left\|  -\partial_{i j}^{L} |\partial_{X}|^{-1} |\nabla_{L}|^{-1} \check{K}_{\neq}^2 \right\|_{L^2 H^N}  \nonumber \\
\leqslant & \left\|  Q_0^2 \right\|_{L^{\infty} H^N} \left( \left\| M \check{K}_{\neq}^{1, 2} \right\|_{L^2 H^N}+ \left\| \nabla_{L} U_{\neq}^3 \right\|_{L^2 H^N}\right) \left\|  M \nabla_{L} \check{K}_{\neq}^2 \right\|_{L^2 H^N}  \nonumber \\
\lesssim &  C_1 \varepsilon  \nu^{-1} \left( \varepsilon \nu^{-\frac{1}{6}}+ C_0 \varepsilon  \nu^{-\frac{1}{3}}  \nu^{-\frac{1}{2}} \right) \varepsilon \nu^{-\frac{1}{2}} \nonumber \\
\lesssim &  \left( C_1 \varepsilon  \nu^{-1}\right)^2 \left(\frac{1}{C_1} \varepsilon + \frac{C_0}{C_1} \varepsilon  \nu^{-\frac{1}{3}}  \right)  \nonumber \\
\lesssim & \left( C_1 \varepsilon  \nu^{-1}\right)^2.
\end{align*}

Finally we consider $\mathcal{NLP}$ which divide into 
\begin{align*}\label{5.62}
\mathcal{NLP} &= \int_{1}^{T} \int  {\left\langle D \right\rangle}^{N}  Q_0^2  {\left\langle D \right\rangle}^{N} \partial_{Y} \Big[ \left(\partial_{i} U_0^{j} \partial_{j} U_0^{i}\right)+ \left(\partial_{i}^{L} U_{\neq}^{j} \partial_{j}^{L} U_{\neq}^{i}\right)_0 \Big] dVdt  \nonumber \\
&\overset{\Delta}{=} \mathcal{NLP}(0, 0)+\mathcal{NLP} (\neq, \neq).
\end{align*}
For $\mathcal{NLP}(0, 0)$, note that $\partial_{i} U_0^{j} \partial_{j} U_0^{i}=\partial_{i j} ( U_0^{j} U_0^{i})=0$ if $i=1$ or $j=1$. Applying \eqref{4..4} and \eqref{4..17}--\eqref{4..18} yields
\begin{align}
\mathcal{NLP}(0, 0)=& -\int_{1}^{T} \int  {\left\langle D \right\rangle}^{N} \partial_{Y} Q_0^2  {\left\langle D \right\rangle}^{N}   \left(\partial_{i} U_0^{j} \partial_{j} U_0^{i}\right) dVdt \nonumber \\
\leqslant & \left\| \nabla Q_0^2 \right\|_{L^2 H^N} \left\| \nabla U_0^j \right\|_{L^{\infty} H^N} \left\| \nabla U_0^i \right\|_{L^2 H^N} \cdot
1_{i, j \neq 1} \nonumber \\
\leqslant & \left\| \nabla Q_0^2 \right\|_{L^2 H^N} \left\| U_0^{2, 3} \right\|_{L^{\infty} H^{N+1}} \left\| \nabla U_0^{2, 3} \right\|_{L^2 H^N}  \nonumber \\
\lesssim & C_1 \varepsilon  \nu^{-1} \nu^{-\frac{1}{2}} \left( C_1 \varepsilon  \nu^{-1}+C_0 \varepsilon  \nu^{-1}  \right) \left( C_1 \varepsilon  \nu^{-1} \nu^{-\frac{1}{2}}+C_0 \varepsilon  \nu^{-1} \nu^{-\frac{1}{2}}\right)  \nonumber\\
\lesssim &  \left( C_1 \varepsilon  \nu^{-1}\right)^2 \left( C_0 \varepsilon  \nu^{-2}+ \frac{C_0^2}{C_1} \varepsilon   \nu^{-2}  \right)  \nonumber \\
\lesssim & \left( C_1 \varepsilon  \nu^{-1}\right)^2.
\end{align}
 To estimate $\mathcal{NLP} (\neq, \neq)$, by using the fact $|\widehat{\nabla_{L}}| \leqslant | \widehat{m^{\frac{1}{2}} \Delta_{L}}|$, \eqref{4.11}, \eqref{4..4}, \eqref{4..2}--\eqref{4..5}, \eqref{4..22}--\eqref{4..24}, \eqref{5.1} and  integration by parts, we have
\begin{align*}
\mathcal{NLP} (\neq, \neq)& = -\int_{1}^{T} \int  {\left\langle D \right\rangle}^{N} \partial_{Y} Q_0^2  {\left\langle D \right\rangle}^{N}   \left(\partial_{i}^{L} U_{\neq}^{j} \partial_{j}^{L} U_{\neq}^{i}\right)_0  dVdt  \nonumber \\
&\leqslant   \left\| \nabla Q_0^2 \right\|_{L^2 H^N} \left\| \nabla_{L} U_{\neq}^j \right\|_{L^{\infty} H^N} \left\| \nabla_{L} U_{\neq}^i \right\|_{L^2 H^N}    \nonumber \\
&\leqslant   \left\| \nabla Q_0^2 \right\|_{L^2 H^N} \left( \left\| M \check{K}_{\neq}^{1, 2} \right\|_{L^{\infty} H^N}+ \left\| m  \Delta_{L} U_{\neq}^3 \right\|_{L^{\infty} H^N} \sup m^{-\frac{1}{2}}  \right)   \nonumber \\
& \quad  \times \left( \left\| \nabla_{L} U_{\neq}^{1, 2}  \right\|_{L^2 H^N}+\left\| \nabla_{L} U_{\neq}^3  \right\|_{L^2 H^N}\right) \nonumber \\
& \lesssim  C_1 \varepsilon  \nu^{-1} \nu^{-\frac{1}{2}} \left( \varepsilon+ C_0 \varepsilon  \nu^{-\frac{1}{3}}   \nu^{-\frac{1}{3}}\right) \left(\varepsilon \nu^{-\frac{1}{2}}+ C_0 \varepsilon  \nu^{-\frac{1}{3}} \nu^{-\frac{1}{2}}\right)     \nonumber \\
&\lesssim   \left( C_1 \varepsilon  \nu^{-1}\right)^2 \left(\frac{1}{C_1} \varepsilon + \frac{C_0}{C_1} \varepsilon  \nu^{-\frac{2}{3}} +\frac{C_0}{C_1} \varepsilon \nu^{-\frac{1}{3}} + \frac{C_0^2}{C_1} \varepsilon  \nu^{-1} \right)  \nonumber \\
&\lesssim  \left( C_1 \varepsilon  \nu^{-1}\right)^2.
\end{align*}
Thus, \eqref{4..4} holds with $8$ replaced by $4$ immediately  follows \eqref{5.51}, \eqref{5.52}, \eqref{5.53}, \eqref{5.56} and \eqref{5.59}.

\section{Energy estimates on $Q^3$}\label{sec6}
In this section, we want to prove that under the bootstrap hypotheses of Proposition \ref{pro4.1}, the estimates on $Q_{\neq}^3$ and $Q_0^3$  hold (i.e., \eqref{4..8}  and \eqref{4..7}), with 8 replaced by 4 on the right-hand side.

\subsection{Energy estimates on $Q_{\neq}^3$}\label{4.4.1}
First, $Q_{\neq}^3$ satisfies the following equation
\begin{align*}
&\partial_{t} Q_{\neq}^{3}-\nu \Delta_{L} Q_{\neq}^{3}+2  \partial_{X Y}^{L} U_{\neq}^{3}-  \partial_{X Z} U_{\neq}^{2}- \partial_{Z Y}^{L} U_{\neq}^{1}  \nonumber\\
&\quad=-\left(U \cdot \nabla_{L} Q^{3}\right)_{\neq} -\left(Q^{j} \partial_{j}^{L} U^{3}\right)_{\neq}-2 \left(\partial_{i}^{L} U^{j} \partial_{i j}^{L} U^{3}\right)_{\neq}+\partial_{Z}\left(\partial_{i}^{L} U^{j} \partial_{j}^{L} U^{i}\right)_{\neq}. 
\end{align*}
An energy estimates gives
\begin{align}\label{6.2}
&\frac{1}{2} \left\| m  M Q_{\neq}^3 (T) \right\|_{H^N}^2+\left\| \sqrt{-\dot{M}M} m   Q_{\neq}^3  \right\|_{L^2 H^N}^2+ \left\| \sqrt{-\dot{m}m} M Q_{\neq}^3 \right\|_{L^2 H^N}^2+\nu \left\| m M \nabla_{L} Q_{\neq}^3 \right\|_{L^2 H^N}^2  \nonumber \\
&\quad=\frac{1}{2} \left\| m  M Q_{\neq}^3 (1) \right\|_{H^N}^2 -\int_{1}^{T}\int m  M {\left\langle D \right\rangle}^{N} Q_{\neq}^3  m M {\left\langle D \right\rangle}^{N} \Big[ 2 \partial_{X Y}^{L} U_{\neq}^3 - \partial_{X Z} U_{\neq}^2- \partial_{Z Y}^{L} U_{\neq}^1 \nonumber \\
&\qquad+\left(U \cdot \nabla_{L} Q^{3}\right)_{\neq} +\left(Q^{j} \partial_{j}^{L} U^{3}\right)_{\neq}+2 \left(\partial_{i}^{L} U^{j} \partial_{i j}^{L} U^{3}\right)_{\neq}-\partial_{Z}\left(\partial_{i}^{L} U^{j} \partial_{j}^{L} U^{i}\right)_{\neq} \Big]    dVdt \nonumber \\
&\quad\overset{\Delta}{=}\frac{1}{2} \left\| m  M Q_{\neq}^3 (1) \right\|_{H^N}^2+\mathcal{LS}3+\mathcal{LP}+\mathcal{LU}3+ \mathcal{TQ}+\mathcal{NLS}1+\mathcal{NLS}2+\mathcal{NLP},  
\end{align}
where the following fact
\begin{align*}
&\int m  M {\left\langle D \right\rangle}^{N} Q_{\neq}^3  m M {\left\langle D \right\rangle}^{N} \partial_t Q_{\neq}^3 dV  \nonumber \\&\quad
=\frac{1}{2}\frac{d}{dt} \left\| m  M Q_{\neq}^3  \right\|_{H^N}^2+\left\| \sqrt{-\dot{M}M} m   Q_{\neq}^3  \right\|_{H^N}^2+ \left\| \sqrt{-\dot{m}m} M Q_{\neq}^3 \right\|_{H^N}^2,
\end{align*}
is used.

We estimate the linear stretching term $\mathcal{LS}3$. By the definition of $m$, H\"{o}lder's inequality and (\ref{4...3}), it gives
\begin{align}\label{6.3}
\mathcal{LS}3 =&-\int_{1}^{T}\int m  M {\left\langle D \right\rangle}^{N} Q_{\neq}^3  m M {\left\langle D \right\rangle}^{N} \left(2  \partial_{X Y}^{L} U_{\neq}^3 \right) dVdt   \nonumber \\
=& \int_{1}^{T} \int m M \widehat{{\left\langle D \right\rangle}^N}  \widehat{Q_{\neq}^3} m M \widehat{{\left\langle D \right\rangle}^N} \frac{-2 k (\eta- kt)}{k^2+(\eta- kt)^2+l^2} \widehat{Q_{\neq}^3} dV dt  \nonumber \\
=&-\int_{1}^{T} \int m M \widehat{{\left\langle D \right\rangle}^N}  \widehat{Q_{\neq}^3} m M \widehat{{\left\langle D \right\rangle}^N} \frac{2 k (\eta- kt)}{k^2+(\eta- kt)^2+l^2} \cdot 1_{\{t: \,  0<  t <\frac{\eta}{k}\}} \widehat{Q_{\neq}^3} dV dt\nonumber \\
&+ \int_{1}^{T} \int m M \widehat{{\left\langle D \right\rangle}^N}  \widehat{Q_{\neq}^3} m M \widehat{{\left\langle D \right\rangle}^N} \frac{-\dot{m}}{m} \cdot 1_{\{t: \,   t \in [\frac{\eta}{k}, \frac{\eta}{k}+1000\nu^{-\frac{1}{3}}]\}} \widehat{Q_{\neq}^3} dV dt  \nonumber \\
&-\int_{1}^{T} \int m M \widehat{{\left\langle D \right\rangle}^N}  \widehat{Q_{\neq}^3} m M \widehat{{\left\langle D \right\rangle}^N} \frac{2 k (\eta- kt)}{k^2+(\eta- kt)^2+l^2} \cdot 1_{\{t: \,    t >\frac{\eta}{k}+1000  \nu^{-\frac{1}{3}}\}} \widehat{Q_{\neq}^3} dV dt            \nonumber \\
\leqslant &  \left\| \sqrt{-\dot{m}m} M Q_{\neq}^3 \right\|_{L^2 H^N}^2+ \frac{\nu}{2} \left\| m M \nabla_{L} Q_{\neq}^3   \right\|_{L^2 H^N}^2.
\end{align}

For the linear pressure term $\mathcal{LP}$, by $U_{\neq}^2= -|\partial_{X}|^{-1} |\nabla_{L}|^{-1} \check{K}_{\neq}^2$,  H\"{o}lder's inequality, \eqref{4..5}--\eqref{4..8} and Corollary \ref{cor4.1}, we have
\begin{align}\label{6.4}
\mathcal{LP} \leqslant&  \int_{1}^{T}\int m  M {\left\langle D \right\rangle}^{N} Q_{\neq}^3  m M {\left\langle D \right\rangle}^{N} \partial_{X Z} U_{\neq}^2 dVdt \nonumber \\
\leqslant &  \left\|  m M Q_{\neq}^3  \right\|_{L^2 H^N} \left\| M \check{K}_{\neq}^2   \right\|_{L^2 H^N} \nonumber \\
\lesssim &  C_0 \varepsilon  \nu^{-\frac{1}{3}} \nu^{-\frac{1}{6}} \varepsilon \nu^{-\frac{1}{6}} \nonumber\\
\lesssim & \left(C_0 \varepsilon  \nu^{-\frac{1}{3}}  \right)^2 \left( \frac{1}{C_0} \right) \nonumber \\
\lesssim & \left(C_0 \varepsilon \nu^{-\frac{1}{3}}  \right)^2,
\end{align}
which suffices for $C_0$ sufficiently large.
 
For  the estimate of lift up term $\mathcal{LU}3$, using $U_{\neq}^1= -|\nabla_{X, Z}|^{-1} |\nabla_{L}|^{-1} \check{K}_{\neq}^1$, \eqref{4..2}, \eqref{4..27} and Corollary \ref{cor4.1} yields
\begin{align}\label{6.5}
\mathcal{LU}3 =& \int_{1}^{T}\int m  M {\left\langle D \right\rangle}^{N} Q_{\neq}^3  m M {\left\langle D \right\rangle}^{N}  \partial_{Z Y}^{L} U_{\neq}^1 dVdt \nonumber \\
\leqslant&  \left\| m Q_{\neq}^3   \right\|_{L^2 H^N}  \left\| M \check{K}_{\neq}^1 \right\|_{L^2 H^N}   \nonumber \\
\lesssim &  C_0 \varepsilon \nu^{-\frac{1}{2}}   \varepsilon \nu^{-\frac{1}{6}} \nonumber \\
\lesssim & \left(C_0 \varepsilon \nu^{-\frac{1}{3}}  \right)^2 \left( \frac{1}{C_0}\right)   \nonumber \\
\lesssim & \left(C_0 \varepsilon \nu^{-\frac{1}{3}}  \right)^2.
\end{align}
We estimate the transport term $\mathcal{TQ}$ which is written  as
\begin{align}\label{6.6}
\mathcal{TQ}=&-\int_{1}^{T}\int m M {\left\langle D \right\rangle}^{N} Q_{\neq}^3 m M {\left\langle D \right\rangle}^{N} \left( U \cdot \nabla_{L} Q^3 \right)_{\neq} dVdt  \nonumber \\
=& -\int_{1}^{T}\int m M {\left\langle D \right\rangle}^{N} Q_{\neq}^3 m M {\left\langle D \right\rangle}^{N} \left[U_0 \cdot \nabla_{L} Q_{\neq}^{3} + U_{\neq} \cdot \nabla Q_0^{3}+ \left( U_{\neq} \cdot \nabla_{L} Q_{\neq}^{3} \right)_{\neq}\right] dVdt  \nonumber \\
\overset{\Delta}{=}& \mathcal{TQ}_{0, \neq} +\mathcal{TQ}_{\neq, 0}+\mathcal{TQ}_{\neq, \neq}.
\end{align}
 
For $\mathcal{TQ}_{0, \neq}$, by \eqref{4..8}, \eqref{4..16}--\eqref{4..18} and \eqref{4..27}, we obtain
\begin{align}
\mathcal{TQ}_{0, \neq} \leqslant & \left\| m  Q_{\neq}^3  \right\|_{L^2 H^N} \left\| U_0  \right\|_{L^{\infty} H^N} \left\| m M \nabla_{L} Q_{\neq}^3  \right\|_{L^2 H^N}  \nonumber \\
\lesssim & C_0 \varepsilon  \nu^{-\frac{1}{2}} \left( \varepsilon+C_1 \varepsilon \nu^{-1}+  C_0 \varepsilon \nu^{-1}\right) C_0 \varepsilon  \nu^{-\frac{1}{3}} \nu^{-\frac{1}{2}} \nonumber \\
\lesssim & \left(C_0 \varepsilon \nu^{-\frac{1}{3}}  \right)^2  \left(\varepsilon \nu^{-\frac{2}{3}} +C_1 \varepsilon \nu^{-\frac{5}{3}}+ C_0 \varepsilon \nu^{-\frac{5}{3}}\right)  \nonumber \\
\lesssim & \left(C_0 \varepsilon  \nu^{-\frac{1}{3}}  \right)^2.
\end{align} 
Considering  $\mathcal{TQ}_{\neq, 0}$ and using \eqref{4..7}, \eqref{4..23}, \eqref{4..24} and \eqref{4..27} yields
\begin{align}
\mathcal{TQ}_{\neq, 0} = & -\int_{1}^{T}\int m M {\left\langle D \right\rangle}^{N} Q_{\neq}^3 m M {\left\langle D \right\rangle}^{N} \left( U_{\neq}^2 \cdot \partial_{Y} Q_0^{3}+ U_{\neq}^3 \cdot \partial_{Z} Q_0^{3} \right) dVdt  \nonumber \\
\leqslant & \left\| m  Q_{\neq}^3  \right\|_{L^2 H^N} \left\| U_{\neq}^{2, 3}  \right\|_{L^{\infty} H^N} \left\|  \nabla Q_0^3  \right\|_{L^2 H^N} \nonumber \\
\lesssim & C_0 \varepsilon  \nu^{-\frac{1}{2}} \left( \varepsilon + C_0 \varepsilon \nu^{-\frac{1}{3}}  \right) C_0 \varepsilon \nu^{-1}  \nu^{-\frac{1}{2}}   \nonumber \\
\lesssim & \left(C_0 \varepsilon \nu^{-\frac{1}{3}}  \right)^2 \left(\varepsilon \nu^{-\frac{4}{3}}+ C_0 \varepsilon  \nu^{-\frac{5}{3}} \right) \nonumber \\
\lesssim & \left(C_0 \varepsilon \nu^{-\frac{1}{3}}  \right)^2.
\end{align}
For $\mathcal{TQ}_{\neq, \neq}$, using \eqref{4..8} and \eqref{4..22}--\eqref{4..24} yields
\begin{align*}
\mathcal{TQ}_{\neq, \neq} \leqslant & \left\|  m  M Q_{\neq}^3 \right\|_{L^{\infty} H^N} \left\|  U_{\neq}^{1, 2} \right\|_{L^2 H^N} \left\| m  M \nabla_{L} Q_{\neq}^3   \right\|_{L^2 H^N}   \nonumber \\
&+ \left\|  m  M Q_{\neq}^3 \right\|_{L^{\infty} H^N} \left\|  U_{\neq}^3 \right\|_{L^2 H^N} \left\| m  M \nabla_{L} Q_{\neq}^3   \right\|_{L^2 H^N} \nonumber \\
\lesssim & C_0 \varepsilon \nu^{-\frac{1}{3}} \varepsilon \nu^{-\frac{1}{6}} C_0 \varepsilon \nu^{-\frac{1}{3}}  \nu^{-\frac{1}{2}} + C_0 \varepsilon \nu^{-\frac{1}{3}}  C_0  \varepsilon \nu^{-\frac{1}{3}}  \nu^{-\frac{1}{6}} C_0 \varepsilon \nu^{-\frac{1}{3}}  \nu^{-\frac{1}{2}}  \nonumber \\
\lesssim & \left(C_0 \varepsilon \nu^{-\frac{1}{3}}  \right)^2 \left(\varepsilon \nu^{-\frac{2}{3}}+ C_0 \varepsilon  \nu^{-1} \right)  \nonumber \\
\lesssim & \left(C_0 \varepsilon \nu^{-\frac{1}{3}}  \right)^2.
\end{align*}

For the estimate of the nonlinear stretching term $\mathcal{NLS} 1$, we first divide $\mathcal{NLS} 1$ into
\begin{align}\label{6.10}
\mathcal{NLS} 1= &-\int_{1}^{T}\int m M {\left\langle D \right\rangle}^{N} Q_{\neq}^3 m M {\left\langle D \right\rangle}^{N} \left[Q_0^j \partial_{j}^{L} U_{\neq}^{3} + Q_{\neq}^j \partial_{j}U_0^{3}+ \left( Q_{\neq}^j \partial_{j}^{L} U_{\neq}^{3} \right)_{\neq}\right] dVdt  \nonumber \\
\overset{\Delta}{=}& \mathcal{NLS}  1 (0, \neq)+\mathcal{NLS}  1 (\neq, 0)+\mathcal{NLS}  1 (\neq, \neq).
\end{align}
For $\mathcal{NLS}  1 (0, \neq)$, by using \eqref{4..1}, \eqref{4..4}, \eqref{4..7}, \eqref{4..24} and \eqref{4..27}, it holds
\begin{align}
\mathcal{NLS}  1 (0, \neq) \leqslant & \left\| m  Q_{\neq}^3  \right\|_{L^2 H^N} \left\| Q_0^{j}  \right\|_{L^{\infty} H^N} \left\| \nabla_{L} U_{\neq}^3  \right\|_{L^2 H^N}  \nonumber \\
\lesssim & C_0 \varepsilon \nu^{-\frac{1}{2}}  \left( \varepsilon+ C_1 \varepsilon \nu^{-1} + C_0 \varepsilon \nu^{-1}\right) C_0 \varepsilon \nu^{-\frac{1}{3}}  \nu^{-\frac{1}{2}} \nonumber \\
\lesssim & \left(C_0 \varepsilon \nu^{-\frac{1}{3}}  \right)^2\left( \varepsilon \nu^{-\frac{2}{3}}+C_1 \varepsilon \nu^{-\frac{5}{3}}+C_0 \varepsilon \nu^{-\frac{5}{3}}\right)  \nonumber \\
\lesssim &\left(C_0 \varepsilon \nu^{-\frac{1}{3}}  \right)^2.
\end{align}
For $\mathcal{NLS}  1 (\neq, 0)$, we use $Q_{\neq}^2= -|\partial_{X}|^{-1} |\nabla_{L}| \check{K}_{\neq}^2$, \eqref{4..5}, \eqref{4..8}, \eqref{4..18} and \eqref{4..27} to deduce
\begin{align}
\mathcal{NLS}  1 (\neq, 0) \leqslant & \left\| m  Q_{\neq}^3  \right\|_{L^2 H^N} \left\| M Q_{\neq}^{2, 3}  \right\|_{L^{2} H^N} \left\| \nabla U_0^3  \right\|_{L^{\infty} H^N}   \nonumber \\
\leqslant & \left\| m  Q_{\neq}^3  \right\|_{L^2 H^N} \left( \left\| M \nabla_{L} \check{K}_{\neq}^{2}  \right\|_{L^{2} H^N}+  \left\| m M Q_{\neq}^3 \right\|_{L^{2} H^N} \right) \left\| \nabla U_0^3  \right\|_{L^{\infty} H^N} \nonumber \\
\lesssim & C_0 \varepsilon \nu^{-\frac{1}{2}}  \left( \varepsilon \nu^{-\frac{1}{2}}  + C_0 \varepsilon \nu^{-\frac{1}{2}} \right) C_0 \varepsilon \nu^{-1}  \nonumber \\
\lesssim & \left(C_0 \varepsilon \nu^{-\frac{1}{3}}  \right)^2 \left( \varepsilon \nu^{-\frac{4}{3}}+C_0 \varepsilon \nu^{-\frac{4}{3}}\right)  \nonumber \\
\lesssim &\left(C_0 \varepsilon  \nu^{-\frac{1}{3}} \right)^2,
\end{align}
which suffices for   $C_0 \varepsilon  \nu^{-\frac{4}{3}}$  sufficiently small.
Next, by the fact $Q_{\neq}^1= -|\nabla_{X, Z}|^{-1} |\nabla_{L}| \check{K}_{\neq}^1$, $Q_{\neq}^2= -|\partial_{X}|^{-1} |\nabla_{L}| \check{K}_{\neq}^2$, \eqref{4..2}, \eqref{4..5}, \eqref{4..8}, \eqref{4..24} and Corollary \ref{cor4.1}, the term $\mathcal{NLS}  1 (\neq, \neq)$ can be estimated as follows
\begin{align*}
\mathcal{NLS}  1 (\neq, \neq) \leqslant & \left\| m  Q_{\neq}^3  \right\|_{L^{\infty} H^N} \left\| mMQ_{\neq}^{j}  \right\|_{L^{2} H^N} \left\| \nabla_{L} U_{\neq}^3  \right\|_{L^2 H^N}   \nonumber \\
\leqslant & \left\| m  Q_{\neq}^3 \right\|_{L^{\infty} H^N} \left( \left\| m  M Q_{\neq}^{1, 2}  \right\|_{L^{2} H^N} +\left\| m M Q_{\neq}^{3}  \right\|_{L^{2} H^N}\right) \left\| \nabla_{L} U_{\neq}^3  \right\|_{L^2 H^N} \nonumber \\
\leqslant & \left\| m  Q_{\neq}^3 \right\|_{L^{\infty} H^N} \left( \left\|  M \nabla_{L} \check{K}_{\neq}^{1, 2}  \right\|_{L^{2} H^N} +\left\| m M Q_{\neq}^{3}  \right\|_{L^{2} H^N}\right) \left\| \nabla_{L} U_{\neq}^3  \right\|_{L^2 H^N} \nonumber \\
\lesssim & C_0 \varepsilon \nu^{-\frac{1}{3}}  \left( \varepsilon \nu^{-\frac{1}{2}}+ C_0 \varepsilon \nu^{-\frac{1}{3}} \nu^{-\frac{1}{6}}\right) C_0 \varepsilon \nu^{-\frac{1}{3}}  \nu^{-\frac{1}{2}} \nonumber \\
\lesssim & \left(C_0 \varepsilon \nu^{-\frac{1}{3}}  \right)^2 \left(\varepsilon \nu^{-1}+C_0 \varepsilon \nu^{-1}\right)  \nonumber \\
\lesssim &\left(C_0 \varepsilon \nu^{-\frac{1}{3}}  \right)^2,
\end{align*}
which suffices for  $C_0 \varepsilon \nu^{-1}$  sufficiently small.

We estimate for the nonlinear stretching term $\mathcal{NLS}2$ which is divided into
\begin{align*}
\mathcal{NLS}  2=& -2\int_{1}^{T}\int m M {\left\langle D \right\rangle}^{N} Q_{\neq}^3 m  M {\left\langle D \right\rangle}^{N} \Big[   \partial_{i} U_0^{j} \partial_{i j}^{L} U_{\neq}^{3} + \partial_{i}^{L} U_{\neq}^{j} \partial_{i j} U_0^{3} \nonumber \\
&   + \left(\partial_{i}^{L} U_{\neq}^{j} \partial_{i j}^{L} U_{\neq}^{3} \right)_{\neq} \Big]   dVdt \nonumber \\
\overset{\Delta}{=}& \mathcal{NLS} 2 (0, \neq)+\mathcal{NLS} 2 (\neq, 0)+\mathcal{NLS} 2 (\neq, \neq).
\end{align*}
For $\mathcal{NLS} 2 (0, \neq)$, we use \eqref{4..16}--\eqref{4..18}, \eqref{4..21}, \eqref{4..27} and Corollary \ref{cor4.1} to obtain
\begin{align}
\mathcal{NLS} 2 (0, \neq) \leqslant & \left\| m Q_{\neq}^3 \right\|_{L^2 H^N} \left\| \nabla U_0^j \right\|_{L^{\infty} H^{N}} \left\| m  \Delta_{L} U_{\neq}^3  \right\|_{L^2 H^N} \cdot 1_{i \neq 1}  \nonumber \\
\leqslant & \left\| m  Q_{\neq}^3 \right\|_{L^2 H^N} \left( \left\|  U_0^1 \right\|_{L^{\infty} H^{N+1}} +\left\|  U_0^{2, 3} \right\|_{L^{\infty} H^{N+1}}   \right)  \left\|  m  \Delta_{L} U_{\neq}^3 \right\|_{L^2 H^N}   \nonumber \\
\lesssim & C_0 \varepsilon \nu^{-\frac{1}{2}}  \left( \varepsilon + C_1 \varepsilon \nu^{-1}  + C_0 \varepsilon \nu^{-1} \right) C_0 \varepsilon \nu^{-\frac{1}{3}} \nu^{-\frac{1}{6}} \nonumber \\
\lesssim & \left(C_0 \varepsilon \nu^{-\frac{1}{3}}  \right)^2 \left( \varepsilon \nu^{-\frac{1}{3}} +C_1 \varepsilon \nu^{-\frac{4}{3}}+C_0 \varepsilon \nu^{-\frac{4}{3}}\right)  \nonumber \\
\lesssim &\left(C_0 \varepsilon \nu^{-\frac{1}{3}}  \right)^2.
\end{align}
Notice that $\partial_{i j} U_0^{3}=0$ if $i=1$ or $j=1$. By \eqref{4..18}, \eqref{4..23}, \eqref{4..24} and \eqref{4..27}, it holds
\begin{align}
\mathcal{NLS} 2 (\neq, 0) \leqslant & \left\| m Q_{\neq}^3 \right\|_{L^2 H^N} \left\| \nabla_{L} U_{\neq}^j \right\|_{L^2 H^N} \left\|  \nabla  U_0^3  \right\|_{L^{\infty} H^{N+1}} \cdot 1_{i, j \neq 1}  \nonumber \\
\leqslant & \left\| m  Q_{\neq}^3 \right\|_{L^2 H^N} \left( \left\| \nabla_{L} U_{\neq}^2   \right\|_{L^2 H^N} +\left\| \nabla_{L} U_{\neq}^3   \right\|_{L^2 H^N}     \right) \left\|  U_0^3  \right\|_{L^{\infty} H^{N+2}} \nonumber \\
\lesssim & C_0 \varepsilon \nu^{-\frac{1}{2}}   \left(  \varepsilon \nu^{-\frac{1}{2}} + C_0 \varepsilon \nu^{-\frac{1}{3}} \nu^{-\frac{1}{2}}\right) C_0 \varepsilon  \nu^{-1}  \nonumber \\
\lesssim & \left(C_0 \varepsilon \nu^{-\frac{1}{3}}  \right)^2 \left( \varepsilon \nu^{-\frac{4}{3}} +C_0 \varepsilon  \nu^{-\frac{5}{3}}\right)  \nonumber \\
\lesssim &\left(C_0 \varepsilon \nu^{-\frac{1}{3}}  \right)^2.
\end{align} 
By \eqref{4.11}, \eqref{4..8}, \eqref{4..22}--\eqref{4..24} and Corollary \ref{cor4.1}, we can estimate $\mathcal{NLS} 2 (\neq, \neq)$ as follows
\begin{align*}
\mathcal{NLS} 2 (\neq, \neq)  \leqslant & \left\| m M Q_{\neq}^3 \right\|_{L^{\infty} H^N} \left( \left\|  \nabla_{L} U_{\neq}^{1, 2} \right\|_{L^2 H^N} +\left\|  \nabla_{L} U_{\neq}^3 \right\|_{L^2 H^N}    \right)  \left\|  m \Delta_{L} U_{\neq}^3 \right\|_{L^2 H^N} \sup m^{-1}   \nonumber \\
\lesssim & C_0 \varepsilon  \nu^{-\frac{1}{3}} \left( \varepsilon \nu^{-\frac{1}{2}} + C_0 \varepsilon \nu^{-\frac{1}{3}} \nu^{-\frac{1}{2}}\right) C_0  \varepsilon \nu^{-\frac{1}{3}}  \nu^{-\frac{1}{6}}  \nu^{-\frac{2}{3}} \nonumber \\
\lesssim & \left(C_0 \varepsilon \nu^{-\frac{1}{3}}  \right)^2 \left(\varepsilon \nu^{-\frac{4}{3}}+C_0 \varepsilon  \nu^{-\frac{5}{3}}\right)  \nonumber \\
\lesssim &\left(C_0 \varepsilon \nu^{-\frac{1}{3}}  \right)^2.
\end{align*}
 
When we estimate  the nonlinear pressure term $\mathcal{NLP}$, it is divided into
\begin{align}\label{6.18}
\mathcal{NLP} =& \int_{1}^{T}\int m M {\left\langle D \right\rangle}^{N} Q_{\neq}^3 m M {\left\langle D \right\rangle}^{N} \partial_{Z} \left[ \partial_{i} U_0^{j} \partial_{j}^{L} U_{\neq}^{i} +\partial_{i}^{L} U_{\neq}^{j} \partial_{j} U_0^{i} +\left( \partial_{i}^{L} U_{\neq}^{j} \partial_{j}^{L} U_{\neq}^{i}\right)_{\neq}   \right] dVdt \nonumber \\
\overset{\Delta}{=}& \mathcal{NLP} (0, \neq)+\mathcal{NLP} (\neq, 0)+\mathcal{NLP} (\neq, \neq).
\end{align}
For $\mathcal{NLP} (0, \neq)$, by  integration by parts and using  \eqref{4..8}, \eqref{4..16}--\eqref{4..18}, \eqref{4..23} and \eqref{4..24}, we deduce
\begin{align}
\mathcal{NLP} (0, \neq) =& -\int_{1}^{T}\int m M {\left\langle D \right\rangle}^{N} \partial_{Z} Q_{\neq}^3 m M {\left\langle D \right\rangle}^{N}    \left(\partial_{i} U_0^{j} \partial_{j}^{L} U_{\neq}^{i} \right)dVdt \cdot 1_{i \neq 1} \nonumber \\
\leqslant & \left\| m M \nabla_{L} Q_{\neq}^3  \right\|_{L^2 H^N}  \left\| U_0^j  \right\|_{L^{\infty} H^{N+1}}  \left(\left\| \nabla_{L} U_{\neq}^{2}  \right\|_{L^2 H^N}+\left\| \nabla_{L} U_{\neq}^{3}  \right\|_{L^2 H^N} \right)  \nonumber \\
\lesssim & C_0 \varepsilon \nu^{-\frac{1}{3}}  \nu^{-\frac{1}{2}} \left(\varepsilon+ C_1 \varepsilon \nu^{-1} + C_0 \varepsilon \nu^{-1}\right)\left( \varepsilon \nu^{-\frac{1}{2}} + C_0 \varepsilon \nu^{-\frac{1}{3}} \nu^{-\frac{1}{2}}\right) \nonumber \\
\lesssim & \left(C_0 \varepsilon \nu^{-\frac{1}{3}}  \right)^2 \left(C_0 \varepsilon \nu^{-2}\right)  \nonumber \\
\lesssim &\left(C_0 \varepsilon \nu^{-\frac{1}{3}}  \right)^2,
\end{align}
which suffices for $C_0 \varepsilon  \nu^{-2}$  sufficiently small. Similarly, we can obtain 
\begin{align}
\mathcal{NLP} (\neq, 0) \lesssim \left(C_0 \varepsilon \nu^{-\frac{1}{3}}  \right)^2.
\end{align}
For $\mathcal{NLP} (\neq, \neq)$, using integration by parts, \eqref{4.11}, \eqref{4..2}--\eqref{4..8}, \eqref{4..22}--\eqref{4..24}, \eqref{5.1} and the fact $|\widehat{\nabla_{L}}| \leqslant | \widehat{m^{\frac{1}{2}} \Delta_{L}}|$ yields
\begin{align}
\mathcal{NLP} (\neq, \neq) =& -\int_{1}^{T}\int m M {\left\langle D \right\rangle}^{N} \partial_{Z} Q_{\neq}^3 m M {\left\langle D \right\rangle}^{N}  \left( \partial_{i}^{L} U_{\neq}^{j} \partial_{j}^{L} U_{\neq}^{i}\right)_{\neq}    dVdt \nonumber \\
\leqslant & \left\| m  M \nabla_{L} Q_{\neq}^3 \right\|_{L^2 H^N} \left\| \nabla_{L} U_{\neq}^j \right\|_{L^{\infty} H^N} \left\| \nabla_{L} U_{\neq}^i \right\|_{L^2 H^N}  \nonumber \\
\leqslant & \left\| m  M \nabla_{L} Q_{\neq}^3 \right\|_{L^2 H^N} \left(  \left\| M \check{K}_{\neq}^{1, 2}  \right\|_{L^{\infty} H^N}+\left\| m \Delta_{L} U_{\neq}^{3}  \right\|_{L^{\infty} H^N} \sup m^{-1}  \right) \nonumber \\
& \times \left( \left\| \nabla_{L} U_{\neq}^1 \right\|_{L^2 H^N} +\left\| \nabla_{L} U_{\neq}^2 \right\|_{L^2 H^N}  +\left\| \nabla_{L} U_{\neq}^3 \right\|_{L^2 H^N}  \right) \nonumber \\
\lesssim & C_0 \varepsilon \nu^{-\frac{1}{3}}  \nu^{-\frac{1}{2}} \left( \varepsilon + C_0 \varepsilon \nu^{-\frac{1}{3}}   \nu^{-\frac{2}{3}}\right) \times \left( \varepsilon \nu^{-\frac{1}{2}}+C_0\varepsilon \nu^{-\frac{1}{3}}  \nu^{-\frac{1}{2}}\right)  \nonumber \\
\lesssim & \left(C_0 \varepsilon \nu^{-\frac{1}{3}}  \right)^2\left( C_0 \varepsilon  \nu^{-2}\right) \nonumber \\
\lesssim & \left(C_0 \varepsilon \nu^{-\frac{1}{3}}  \right)^2.
\end{align}
Submitting the estimates (\ref{6.3})--(\ref{6.6}), (\ref{6.10}) and (\ref{6.18}) into (\ref{6.2}), then (\ref{4..8}) holds with $8$ replace by $4$ on the right-hand side.

\subsection{Energy estimates on $Q_0^3$}\label{4.4.2}
Notice that $Q_0^3$ satisfies the following equation
\begin{align*}
&\partial_{t} Q_0^{3}-\nu \Delta Q_0^{3}- \partial_{Z Y} U_0^{1}  \nonumber \\
&\quad=-\left(U \cdot \nabla_{L} Q^{3}\right)_0-\left(Q^{j} \partial_{j}^{L} U^{3}\right)_0-2 \left(\partial_{i}^{L} U^{j} \partial_{i j}^{L} U^{3} \right)_0+\partial_{Z}\left(\partial_{i}^{L} U^{j} \partial_{j}^{L} U^{i}\right)_0.
\end{align*}
We give the $H^N$ estimate for $Q_0^3$
\begin{align}\label{4..176}
\frac{1}{2} \left\|  Q_0^3 (T)  \right\|_{H^N}^2+\nu \left\|   \nabla Q_0^3 \right\|_{L^2 H^N}^2 &= \frac{1}{2} \left\|  Q_0^3 (1)  \right\|_{H^N}^2+ \int_{1}^{T} \int  {\left\langle D \right\rangle}^{N}  Q_0^3  {\left\langle D \right\rangle}^{N} \Big[   \partial_{Z Y} U_0^{1} - \left(U \cdot \nabla_{L} Q^3 \right)_0 \nonumber \\
&\quad - \left(Q^{j} \partial_{j}^{L} U^3\right)_0-2 \left(\partial_{i}^{L} U^{j} \partial_{i j}^{L} U^3 \right)_0 +\partial_{Z} \left(\partial_{i}^{L} U^{j} \partial_{j}^{L} U^{i}\right)_0 \Big]dVdt  \nonumber \\
&\overset{\Delta}{=}\frac{1}{2} \left\|  Q_0^3 (1) \right\|_{H^N}^2+ \mathcal{LU}3+\mathcal{T}+\mathcal{NLS}1+\mathcal{NLS}2+\mathcal{NLP},
\end{align}
where the right-hand side  of  \eqref{4..176} will be estimated term by term as follows. We first consider the lift up term  $\mathcal{LU}3$. Using H\"{o}lder's inequality, \eqref{4..16} and \eqref{4..18} yields
\begin{align}
\mathcal{LU}3 =& \int_{1}^{T} \int  {\left\langle D \right\rangle}^{N}  Q_0^3  {\left\langle D \right\rangle}^{N}  \partial_{Z Y} \Delta^{-1} Q_0^{1} dVdt \nonumber \\
\leqslant &  \int_{1}^{T} \int (1+|k, \eta, l|^2)^{\frac{N}{2}} \widehat{Q_0^3} \, (1+|k, \eta, l|^2)^{\frac{N}{2}} \frac{\eta l}{k^2+\eta^2+l^2} \widehat{Q_0^1} d \xi dt \nonumber \\
\leqslant &  \int_{1}^{T} \int {\left\langle D \right\rangle}^{N} Q_0^3 \cdot {\left\langle D \right\rangle}^{N} Q_0^1 dVdt \nonumber \\
\leqslant &  \left\|  \nabla U_0^3 \right\|_{L^2 H^{N+1}} \left\|  \nabla U_0^1 \right\|_{L^2 H^{N+1}} \nonumber \\
\lesssim &  C_0 \varepsilon  \nu^{-1} \nu^{-\frac{1}{2}} \varepsilon \nu^{-\frac{1}{2}}  \nonumber \\
\lesssim & \left( C_0 \varepsilon  \nu^{-1}\right)^2 \left(   \frac{1}{C_0}\right)   \nonumber \\
\lesssim & \left( C_0 \varepsilon  \nu^{-1}\right)^2, 
\end{align}
which suffices for $C_0$ sufficiently large. The term $\mathcal{T}$ can be divided into
\begin{align*}
\mathcal{T}= - \int_{1}^{T} \int  {\left\langle D \right\rangle}^{N}  Q_0^3  {\left\langle D \right\rangle}^{N} \Big[ U_0 \cdot \nabla Q_0^{3} +\left(U_{\neq} \cdot \nabla_{L} Q_{\neq}^{3} \right)_0 \Big]dVdt \overset{\Delta}{=} \mathcal{T}_1+\mathcal{T}_2.
\end{align*}
For $\mathcal{T}_1$, we use \eqref{4..7} and \eqref{4..17}--\eqref{4..18} to deduce
\begin{align}
\mathcal{T}_1 = &- \int_{1}^{T} \int  {\left\langle D \right\rangle}^{N}  Q_0^3  {\left\langle D \right\rangle}^{N} \left( U_0^2 \partial_{Y} Q_0^3 +U_0^3 \partial_{Z} Q_0^3 \right)dVdt  \nonumber \\
\leqslant & \left\| \nabla U_0^3 \right\|_{L^2 H^{N+1}} \left\| U_0^{2, 3} \right\|_{L^{\infty} H^N} \left\| \nabla Q_0^3 \right\|_{L^2 H^N}  \nonumber \\
\lesssim &  C_0 \varepsilon  \nu^{-1} \nu^{-\frac{1}{2}}  \left( C_1 \varepsilon  \nu^{-1} + C_0 \varepsilon  \nu^{-1} \right) C_0 \varepsilon  \nu^{-1} \nu^{-\frac{1}{2}}  \nonumber \\
\lesssim & \left( C_0 \varepsilon  \nu^{-1}\right)^2 \left( C_1 \varepsilon  \nu^{-2} + C_0 \varepsilon  \nu^{-2}      \right) \nonumber\\
\lesssim & \left( C_0 \varepsilon  \nu^{-1}\right)^2,
\end{align}
which suffices for $C_0 \varepsilon  \nu^{-2}$  sufficiently small. For $\mathcal{T}_2$, by \eqref{4.11}, \eqref{4..2}--\eqref{4..8}, \eqref{4..18} and \eqref{4..24}, we have
\begin{align}
\mathcal{T}_2 \leqslant & \left\| \nabla U_0^3 \right\|_{L^{\infty} H^{N+1}} \left\| U_{\neq}^i \right\|_{L^{2} H^N} \left\| \nabla_{L} Q_{\neq}^3 \right\|_{L^2 H^N}  \nonumber \\
\leqslant & \left\| \nabla U_0^3 \right\|_{L^{\infty} H^{N+1}} \left( \left\| M \check{K}_{\neq}^{1, 2} \right\|_{L^{2} H^N} + \left\|  U_{\neq}^3 \right\|_{L^{2} H^N}  \right) \nonumber \\
&  \times \left\| m  M \nabla_{L} Q_{\neq}^3 \right\|_{L^2 H^N} \sup m^{-1}  \nonumber \\
\lesssim & C_0 \varepsilon  \nu^{-1}  \left( \varepsilon  \nu^{-\frac{1}{6}} + C_0 \varepsilon  \nu^{-\frac{1}{3}} \nu^{-\frac{1}{6}}   \right) C_0 \varepsilon \nu^{-\frac{1}{3}}   \nu^{-\frac{1}{2}}  \nu^{-\frac{2}{3}} \nonumber \\
\lesssim & \left( C_0 \varepsilon  \nu^{-1}\right)^2 \left({C_0} \varepsilon  \nu^{-1} + \varepsilon  \nu^{-\frac{2}{3}}      \right) \nonumber\\
\lesssim & \left( C_0 \varepsilon  \nu^{-1}\right)^2.
\end{align}

Next, we consider the estimate for $\mathcal{NLS}1$ as follows
\begin{align*}
\mathcal{NLS}1&=- \int_{1}^{T} \int  {\left\langle D \right\rangle}^{N}  Q_0^3  {\left\langle D \right\rangle}^{N} \Big[  Q_0^{j} \partial_{j} U_0^3+ \left(Q_{\neq}^{j} \partial_{j}^{L} U_{\neq}^3\right)_0  \Big]dVdt \nonumber \\
&\overset{\Delta}{=} \mathcal{NLS}1 (0, 0)+\mathcal{NLS}1(\neq, \neq).
\end{align*}
For $\mathcal{NLS}1 (0, 0)$, note that $\partial_{j} U_0^3=0$ if $j=1$. And applying \eqref{4..7}, \eqref{4..17}--\eqref{4..18} yields
\begin{align}
\mathcal{NLS}1 (0, 0) \leqslant & \left\| Q_0^3  \right\|_{L^{\infty} H^N} \left\| Q_0^j  \right\|_{L^2 H^N} \left\|   \nabla U_0^3 \right\|_{L^2 H^N} \cdot  1_{j \neq 1} \nonumber \\
\leqslant & \left\| Q_0^3  \right\|_{L^{\infty} H^N} \left\| \nabla  U_0^{2, 3}  \right\|_{L^2 H^{N+1}} \left\|   \nabla U_0^3 \right\|_{L^2 H^N}  \nonumber \\
\lesssim & C_0 \varepsilon  \nu^{-1} \left(C_1 \varepsilon  \nu^{-1} \nu^{-\frac{1}{2}}+ C_0 \varepsilon  \nu^{-1} \nu^{-\frac{1}{2}} \right) C_0 \varepsilon  \nu^{-1} \nu^{-\frac{1}{2}} \nonumber \\
\lesssim & \left( C_0 \varepsilon  \nu^{-1}\right)^2 \left( C_1 \varepsilon  \nu^{-2} + C_0 \varepsilon  \nu^{-2}      \right) \nonumber\\
\lesssim & \left( C_0 \varepsilon  \nu^{-1}\right)^2.
\end{align}
To estimate $\mathcal{NLS}1(\neq, \neq)$, we use \eqref{4.11}, \eqref{4..2}--\eqref{4..5}, \eqref{4..7},  \eqref{4..24} and \eqref{5.1} to obtain
\begin{align*}
\mathcal{NLS}1(\neq, \neq) \leqslant & \left\| Q_0^3 \right\|_{L^{\infty} H^N} \left\| Q_{\neq}^j \right\|_{L^2 H^N} \left\| \nabla_{L} U_{\neq}^3  \right\|_{L^2 H^N}   \nonumber \\
\leqslant & \left\| Q_0^3 \right\|_{L^{\infty} H^N} \left( \left\|  Q_{\neq}^{1, 2} \right\|_{L^2 H^N} + \left\| m Q_{\neq}^3 \right\|_{L^2 H^N}  \sup m^{-1} \right) \left\| \nabla_{L} U_{\neq}^3  \right\|_{L^2 H^N} \nonumber \\
\leqslant & \left\| Q_0^3 \right\|_{L^{\infty} H^N} \left( \left\|  M \nabla_{L} \check{K}_{\neq}^{1, 2} \right\|_{L^2 H^N} + \left\| m Q_{\neq}^3 \right\|_{L^2 H^N} \sup m^{-1}  \right) \left\| \nabla_{L} U_{\neq}^3  \right\|_{L^2 H^N} \nonumber \\
\lesssim & C_0 \varepsilon  \nu^{-1} \left( \varepsilon \nu^{-\frac{1}{2}} + C_0 \varepsilon  \nu^{-\frac{1}{2}}    \nu^{-\frac{2}{3}}\right) C_0 \varepsilon  \nu^{-\frac{1}{3}}  \nu^{-\frac{1}{2}} \nonumber \\
\lesssim & \left( C_0 \varepsilon  \nu^{-1}\right)^2 \left(  \varepsilon \nu^{-\frac{1}{3}} + {C_0} \varepsilon  \nu^{-1}      \right) \nonumber\\
\lesssim & \left( C_0 \varepsilon  \nu^{-1}\right)^2.
\end{align*} 
The term $\mathcal{NLS}2$ can also be divided into
\begin{align}
\mathcal{NLS}2&= - 2 \int_{1}^{T} \int  {\left\langle D \right\rangle}^{N}  Q_0^3  {\left\langle D \right\rangle}^{N} \Big[   \partial_{i} U_0^{j} \partial_{i j} U_0^3 + \left(\partial_{i}^{L} U_{\neq}^{j} \partial_{i j}^{L} U_{\neq}^3 \right)_0   \Big]dVdt \nonumber\\
&\overset{\Delta}{=} \mathcal{NLS}2(0, 0)+\mathcal{NLS}2 (\neq, \neq).
\end{align}
For $\mathcal{NLS}2(0, 0)$, by \eqref{4..7} and \eqref{4..17}--\eqref{4..18}, it holds
\begin{align}
\mathcal{NLS}2(0, 0) \leqslant & \left\| Q_0^3    \right\|_{L^{\infty} H^N} \left\|   \nabla U_0^j  \right\|_{L^2 H^{N}} \left\| \nabla U_0^3 \right\|_{L^2 H^{N+1}} \cdot 1_{i, j \neq 1}\nonumber \\
\leqslant & \left\| Q_0^3    \right\|_{L^{\infty} H^N} \left\|   \nabla U_0^{2, 3} \right\|_{L^2 H^N} \left\| \nabla U_0^3 \right\|_{L^2 H^{N+1}} \nonumber \\
\lesssim & C_0 \varepsilon  \nu^{-1}   \left( C_1 \varepsilon  \nu^{-1} \nu^{-\frac{1}{2}} + C_0 \varepsilon  \nu^{-1} \nu^{-\frac{1}{2}} \right) C_0 \varepsilon  \nu^{-1} \nu^{-\frac{1}{2}}   \nonumber \\
\lesssim &  \left( C_0 \varepsilon  \nu^{-1}\right)^2 \left(  C_1 \varepsilon  \nu^{-2}+C_0 \varepsilon  \nu^{-2}  \right) \nonumber \\
\lesssim & \left( C_0 \varepsilon  \nu^{-1}\right)^2.
\end{align} 
For  $\mathcal{NLS}2 (\neq, \neq)$, using Corollary \ref{cor4.1}, \eqref{4.11}, \eqref{4..7} and \eqref{4..23}--\eqref{4..24} yields
\begin{align*}
\mathcal{NLS}2 (\neq, \neq) \leqslant & \left\|  Q_0^3 \right\|_{L^{\infty} H^N} \left( \left\| \nabla_{L} U_{\neq}^2 \right\|_{L^2 H^N}+ \left\| \nabla_{L} U_{\neq}^3 \right\|_{L^2 H^N}\right) \left\|  m  \Delta_{L} U_{\neq}^3 \right\|_{L^2 H^N} \sup m^{-1} \nonumber \\
\lesssim &  C_0 \varepsilon  \nu^{-1} \left( \varepsilon \nu^{-\frac{1}{2}}+ C_0 \varepsilon  \nu^{-\frac{1}{3}} \nu^{-\frac{1}{2}} \right) C_0 \varepsilon  \nu^{-\frac{1}{3}}  \nu^{-\frac{1}{6}}  \nu^{-\frac{2}{3}} \nonumber \\
\lesssim &  \left( C_0 \varepsilon  \nu^{-1}\right)^2 \left( \varepsilon \nu^{-\frac{2}{3}} + C_0 \varepsilon  \nu^{-1}  \right)  \nonumber \\
\lesssim & \left( C_0 \varepsilon  \nu^{-1}\right)^2.
\end{align*}
 
Finally, we need to estimate the term $\mathcal{NLP}$ which divided into 
\begin{align*}
\mathcal{NLP} &= \int_{1}^{T} \int  {\left\langle D \right\rangle}^{N}  Q_0^3  {\left\langle D \right\rangle}^{N} \partial_{Z} \Big[ \left(\partial_{i} U_0^{j} \partial_{j} U_0^{i}\right)+ \left(\partial_{i}^{L} U_{\neq}^{j} \partial_{j}^{L} U_{\neq}^{i}\right)_0 \Big] dVdt  \nonumber \\
&\overset{\Delta}{=} \mathcal{NLP}(0, 0)+\mathcal{NLP} (\neq, \neq).
\end{align*}
For $\mathcal{NLP}(0, 0)$, note that $\partial_{i} U_0^{j} \partial_{j} U_0^{i}= \partial_{i j} ( U_0^{j} U_0^{i})=0$ if $i=1$ or $j=1$. By \eqref{4..7} and \eqref{4..17}--\eqref{4..18}, we obtain
\begin{align}
\mathcal{NLP}(0, 0)\leqslant & \left\| Q_0^3 \right\|_{L^{\infty} H^N} \left\| \nabla U_0^j \right\|_{L^2 H^{N+1}} \left\| \nabla U_0^i \right\|_{L^2 H^{N+1}} \cdot 1_{i, j \neq 1} \nonumber \\
\leqslant & \left\| Q_0^3 \right\|_{L^{\infty} H^N} \left\| \nabla U_0^{2, 3} \right\|_{L^2 H^{N+1}}^2  \nonumber \\
\lesssim & C_0 \varepsilon  \nu^{-1}  \left( C_1 \varepsilon  \nu^{-1} \nu^{-\frac{1}{2}}+C_0 \varepsilon  \nu^{-1} \nu^{-\frac{1}{2}}\right)^2  \nonumber\\
\lesssim &  \left( C_0 \varepsilon  \nu^{-1}\right)^2 \left( C_0 \varepsilon  \nu^{-2} \right)  \nonumber \\
\lesssim & \left( C_0 \varepsilon  \nu^{-1}\right)^2.
\end{align}
For $\mathcal{NLP} (\neq, \neq)$, applying \eqref{4.11}, \eqref{4..5}, \eqref{4..7},  \eqref{4..23}--\eqref{4..24} and \eqref{5.1} yields
\begin{align*}
\mathcal{NLP} (\neq, \neq) &\leqslant   \left\| Q_0^3 \right\|_{L^{\infty} H^N} \left\| \nabla_{L} U_{\neq}^j \right\|_{L^2 H^N} \left\| \Delta_{L} U_{\neq}^i \right\|_{L^2 H^N} \cdot  1_{i, j \neq 1}   \nonumber \\
&\leqslant   \left\| Q_0^3 \right\|_{L^{\infty} H^N} \left( \left\| M \nabla_{L} \check{K}_{\neq}^{2} \right\|_{L^2 H^N} + \left\| m  \Delta_{L} U_{\neq}^3 \right\|_{L^2 H^N} \sup m^{-1}  \right)   \nonumber \\
& \quad  \times \left( \left\| \nabla_{L} U_{\neq}^{2}  \right\|_{L^2 H^N}+\left\| \nabla_{L} U_{\neq}^3  \right\|_{L^2 H^N}\right) \nonumber \\
& \lesssim  C_0 \varepsilon  \nu^{-1}  \left( \varepsilon \nu^{-\frac{1}{2}} + C_0 \varepsilon  \nu^{-\frac{1}{3}} \nu^{-\frac{1}{6}}  \nu^{-\frac{2}{3}}\right) \left(\varepsilon \nu^{-\frac{1}{2}}+ C_0 \varepsilon  \nu^{-\frac{1}{3}} \nu^{-\frac{1}{2}}\right)     \nonumber \\
&\lesssim   \left( C_0 \varepsilon  \nu^{-1}\right)^2 \left( \frac{1}{C_0} \varepsilon +   \varepsilon  \nu^{-\frac{2}{3}} +  C_0 \varepsilon \nu^{-1} \right)  \nonumber \\
&\lesssim  \left( C_0 \varepsilon  \nu^{-1}\right)^2.
\end{align*}

\section{Energy estimates on zero frequency  velocity $U$}\label{sec7}
In this section, the purposes are to deduce low frequency controls on the velocity. That is,  we want to prove that under the bootstrap hypotheses of Proposition \ref{pro4.1}, the estimates on $U_0^{i}$ $(i=1, 2, 3)$  hold (i.e., \eqref{4..11}--\eqref{4..13}), with 8 replaced by 4 on the right-hand side. By the definition of new variable as in (\ref{2.5}), we can move from one coordinate system to another.  Specially,
\begin{align*}
	\left\| U_0^i \right\|_{H^N} = \left\| u_0^i \right\|_{H^N}.% \quad \quad %\left\| U_{\neq}^i \right\|_{H^N} = \left\| u_{\neq}^i \right\|_{H^N}.
\end{align*}
So, it suffices to prove these estimates on $u_{0}^{i}$, rather that $U_{0}^{i}$.
In the original coordinates, $u_0^{i}$ $(i=1, 2, 3)$ satisfy the equations
\begin{equation}\label{7.1}
\left\{\begin{array}{l}
\partial_{t} u_0^{1}-\nu \Delta u_0^{1}=-\left(u \cdot \nabla u^1 \right)_0 \\
\partial_{t} u_0^{2}-\nu \Delta  u_0^{2}+ u_0^1+ \partial_{y} \left( - \Delta \right)^{-1} \partial_{y} u_0^{1}=-\left(u \cdot \nabla u^2 \right)_0- \partial_{y}\left( - \Delta \right)^{-1} \left( \partial_{i} u^j \partial_{j} u^i  \right)_0 \\
\partial_{t} u_0^{3}-\nu \Delta u_0^{3}+  \partial_{z}  \left( - \Delta \right)^{-1} \partial_{y} u_0^{1}=-\left(u \cdot \nabla u^3 \right)_0- \partial_{z}\left( - \Delta \right)^{-1} \left( \partial_{i} u^j \partial_{j} u^i  \right)_0.
\end{array}\right.
\end{equation}
%Due to Lemma 2.5 in \cite{MR3612004}, we can move from one coordinate system to another.
\subsection{Energy estimates on $u_0^1$}\label{4.5.1}
First we consider the equation (\ref{7.1})$_{1}$ which gives the $H^{N-1}$ estimate for $u_0^1$
\begin{align*}
\frac{1}{2} \left\|  u_0^1 (T)  \right\|_{H^{N-1}}^2+\nu \left\|   \nabla u_0^1 \right\|_{L^2 H^{N-1}}^2 &= \frac{1}{2} \left\|u_{0}^1(1)   \right\|_{H^{N-1}}^2- \int_{1}^{T} \int  {\left\langle D \right\rangle}^{{N-1}}  u_0^1  {\left\langle D \right\rangle}^{{N-1}}  \left(u \cdot \nabla u^{1} \right)_0 dVdt  \nonumber \\
&\overset{\Delta}{=}\frac{1}{2} \left\|u_{0}^1(1) \right\|_{H^{N-1}}^2+ \mathcal{T}.
\end{align*}
The term $\mathcal{T}$ can be divided into
\begin{align*}
\mathcal{T} & = - \int_{0}^{T} \int  {\left\langle D \right\rangle}^{{N-1}}  u_0^1  {\left\langle D \right\rangle}^{{N-1}}  \Big[ \left( u_0^2 \partial_{y} u_0^{1} + u_0^3 \partial_{z} u_0^{1} \right)+ \left(u_{\neq} \cdot \nabla u_{\neq}^{1} \right)_0 \Big] dVdt  \nonumber \\
& \overset{\Delta}{=} \mathcal{T}_0+ \mathcal{T}_{\neq}.
\end{align*}
Note that $\widehat{u_{0, 0}^2} (\eta)= \widehat{u^2} \left(k=0, l=0 \right)=0$. Applying frequency decomposition in subsection \ref{2.*.4} and \eqref{4..11}--\eqref{4..13} yields
\begin{align}
\mathcal{T}_0 \leqslant & \left\| u_0^1 \right\|_{L^{\infty} H^{N-1}} \left\| \nabla u_0^2 \right\|_{L^2 H^{N-1}} \left\| \nabla u_0^1 \right\|_{L^2 H^{N-1}} \nonumber \\
&+ \left\| \nabla u_0^1 \right\|_{L^2 H^{N-1}} \left\| \nabla u_0^3 \right\|_{L^2 H^{N-1}} \left\| u_0^1 \right\|_{L^{\infty} H^{N-1}}+ \left\| u_0^3 \right\|_{L^{\infty} H^{N-1}} \left\| \nabla u_0^1 \right\|_{L^2 H^{N-1}}^2 \nonumber \\
\lesssim & \varepsilon C_1 \varepsilon  \nu^{-1} \nu^{-\frac{1}{2}} \varepsilon \nu^{-\frac{1}{2}}+ \varepsilon \nu^{-\frac{1}{2}} C_0 \varepsilon  \nu^{-1} \nu^{-\frac{1}{2}} \varepsilon+ C_0 \varepsilon  \nu^{-1} \varepsilon \nu^{-\frac{1}{2}} \varepsilon \nu^{-\frac{1}{2}}  \nonumber \\
\lesssim & \varepsilon^2 \left( C_1 \varepsilon  \nu^{-2} + C_0 \varepsilon  \nu^{-2}  \right) \nonumber \\
\lesssim & \varepsilon^2,
\end{align}
which suffices for $C_0 \varepsilon  \nu^{-2}$  sufficiently small. For $\mathcal{T}_{\neq}$, we use \eqref{4..2}, \eqref{4..16}, \eqref{4..23}, \eqref{4..24}, \eqref{5.1} and Corollary \ref{cor4.1} to deduce
\begin{align*}
\mathcal{T}_{\neq} \leqslant & \left\| u_0^1 \right\|_{L^{\infty} H^{N-1}} \left\| \left( u_{\neq}^2, u_{\neq}^3 \right) \right\|_{L^2 H^{N-1}} \left\| \nabla u_{\neq}^1 \right\|_{L^2 H^{N-1}}   \nonumber \\
\leqslant & \left\| U_0^1 \right\|_{L^{\infty} H^{N-1}} \left\| \left( U_{\neq}^2, U_{\neq}^3 \right) \right\|_{L^2 H^{N-1}} \left\| M \check{K}_{\neq}^1 \right\|_{L^2 H^{N-1}}   \nonumber \\
\lesssim & \varepsilon \left( \varepsilon \nu^{-\frac{1}{6}}+ C_0\varepsilon  \nu^{-\frac{1}{3}} \nu^{-\frac{1}{6}}   \right) \varepsilon \nu^{-\frac{1}{6}} \nonumber \\
\lesssim & \varepsilon^2 \left( \varepsilon  \nu^{-\frac{1}{3}} + C_0 \varepsilon  \nu^{-\frac{2}{3}}  \right) \nonumber \\
\lesssim & \varepsilon^2.
\end{align*}

\subsection{Energy estimates on $u_0^2$}\label{4.5.2}
The estimate for $u_0^2$ is similar to $u_0^1$.   $H^{N-1}$ estimate for $u_0^2$ gives
\begin{align}\label{4..196}
&\frac{1}{2} \left\|  u_0^2 (T)  \right\|_{H^{N-1}}^2+\nu \left\|   \nabla u_0^2 \right\|_{L^2 H^{N-1}}^2  \nonumber \\
&\quad= \frac{1}{2} \left\|  u_{0}^2(1)  \right\|_{H^{N-1}}^2- \int_{1}^{T} \int  {\left\langle D \right\rangle}^{N-1}  u_0^2  {\left\langle D \right\rangle}^{N-1} \Big[ \left(  u_0^1+ \partial_{y} (-\Delta)^{-1} \partial_{y} u_0^{1} \right) \nonumber \\
&\qquad + \left(u \cdot \nabla u^2 \right)_0 +\partial_{y} (-\Delta)^{-1} \left(\partial_{i}  u^{j} \partial_{j} u^{i}\right)_0 \Big]dVdt  \nonumber \\
& \quad\overset{\Delta}{=}\frac{1}{2} \left\|  u_{0 }^2(1)  \right\|_{H^{N-1}}^2+ \mathcal{LU}2+\mathcal{T}+\mathcal{NLP}.
\end{align}
Now we  estimate the terms on the right-hand side of \eqref{4..196}. For $\mathcal{LU}2$, by \eqref{4..11}--\eqref{4..12}, it holds
\begin{align}\label{7.4}
\mathcal{LU}2=&-\int_{1}^{T} \int \left(1+|k, \eta, l|^2\right)^{\frac{N-1}{2}} \widehat{u_0^2}  \left(1+|k, \eta, l|^2\right)^{\frac{N-1}{2}} \frac{ l^2}{\eta^2+l^2} \widehat{u_0^1} d\xi dt \nonumber \\
\leqslant &  \left\| \nabla u_0^1  \right\|_{L^2 H^{N-1}} \left\| \nabla u_0^2  \right\|_{L^2 H^{N-1}} \nonumber \\
\lesssim &  \varepsilon \nu^{-\frac{1}{2}} C_1 \varepsilon \nu^{-1}  \nu^{-\frac{1}{2}} \nonumber \\
\lesssim & \left(C_1 \varepsilon  \nu^{-1} \right)^2 \left( \frac{1}{C_1}     \right) \nonumber \\
\lesssim & \left(C_1 \varepsilon  \nu^{-1} \right)^2,
\end{align}
which suffices for $C_1$ sufficiently large.   
Considering $\mathcal{T}$, it can be treated as for $u_0^1$ in the previous subsection
\begin{align}\label{7.5}
\mathcal{T}&=- \int_{1}^{T} \int  {\left\langle D \right\rangle}^{N-1}  u_0^2  {\left\langle D \right\rangle}^{N-1} \Big[  u_0^2 \partial_{y} u_0^2 + u_0^3 \partial_{z} u_0^2 + \left(u_{\neq} \cdot \nabla u_{\neq}^2 \right)_0 \Big]dVdt  \nonumber \\
&\leqslant  \left\| u_0^2 \right\|_{L^{\infty} H^{N-1}} \left\| \nabla u_0^2 \right\|_{L^2 H^{N-1}}^2+\left\| u_0^2 \right\|_{L^{\infty} H^{N-1}} \left\| \nabla u_0^3 \right\|_{L^2 H^{N-1}} \left\| \nabla u_0^2 \right\|_{L^2 H^{N-1}} \nonumber \\
&\quad+\left\| \nabla u_0^2 \right\|_{L^2 H^{N-1}} \left\| u_0^3 \right\|_{L^{\infty} H^{N-1}} \left\| \nabla u_0^2 \right\|_{L^2 H^{N-1}}  \nonumber \\
&\quad+\left\| u_0^2 \right\|_{L^{\infty} H^{N-1}} \left\| \left( u_{\neq}^2, u_{\neq}^3  \right) \right\|_{L^2 H^{N-1}} \left\| M \check{K}_{\neq}^2 \right\|_{L^2 H^{N-1}} \nonumber \\
&\lesssim  C_1 \varepsilon  \nu^{-1} \left( C_1 \varepsilon  \nu^{-1} \nu^{-\frac{1}{2}} \right)^2+ C_1 \varepsilon  \nu^{-1} C_0 \varepsilon  \nu^{-1} \nu^{-\frac{1}{2}} C_1 \varepsilon  \nu^{-1} \nu^{-\frac{1}{2}}  \nonumber \\
&\quad+ C_1 \varepsilon  \nu^{-1} \left( \varepsilon \nu^{-\frac{1}{6}}+ C_0 \varepsilon \nu^{-\frac{1}{3}}  \nu^{-\frac{1}{6}} \right) \varepsilon \nu^{-\frac{1}{6}}  \nonumber \\
&\lesssim \left(C_1 \varepsilon  \nu^{-1} \right)^2 \left( C_1 \varepsilon  \nu^{-2}+ C_0 \varepsilon  \nu^{-2}+ \frac{1}{C_1} \varepsilon \nu^{\frac{2}{3}} + \frac{C_0}{C_1} \varepsilon \nu^{\frac{1}{3}}         \right)   \nonumber \\
& \lesssim  \left(C_1 \varepsilon  \nu^{-1} \right)^2,
\end{align}
by using $U_{\neq}^2= -|\partial_{X}|^{-1} |\nabla_{L}|^{-1} \check{K}_{\neq}^2$, \eqref{4..5}, \eqref{4..16}--\eqref{4..18}, \eqref{4..23} and \eqref{4..24}.

The term $\mathcal{NLP}$ is divided into
\begin{align*}
\mathcal{NLP}&= -\int_{1}^{T} \int  {\left\langle D \right\rangle}^{N-1}  u_0^2  {\left\langle D \right\rangle}^{N-1} \partial_{y} (-\Delta)^{-1}\partial_{i} \Big[ u_0^{j} \partial_{j} u_0^{i} +\left(  u_{\neq}^{j} \partial_{j} u_{\neq}^{i}\right)_0 \Big]dVdt  \nonumber \\
& \overset{\Delta}{=} \mathcal{NLP}_0+\mathcal{NLP}_{\neq}.
\end{align*}
We use \eqref{4..12}--\eqref{4..13} and the fact $\left\| u_0^2 \right\|_{L^2 H^{N-1}} \leqslant \left\| \partial_{z} u_0^2 \right\|_{L^2 H^{N-1}}$    to obtain
\begin{align}\label{7.6}
\mathcal{NLP}_0 \leqslant & \left\| u_0^2 \right\|_{L^2 H^{N-1}} \left\| u_0^{2, 3} \right\|_{L^{\infty} H^{N-1}} \left\| \nabla u_0^{2, 3} \right\|_{L^2 H^{N-1}} \nonumber \\
\leqslant & \left\| \partial_{z} u_0^2 \right\|_{L^2 H^{N-1}} \left\| u_0^{2, 3} \right\|_{L^{\infty} H^{N-1}} \left\| \nabla u_0^{2, 3} \right\|_{L^2 H^{N-1}} \nonumber \\
\lesssim &  C_1 \varepsilon  \nu^{-1} \nu^{-\frac{1}{2}} \left( C_1 \varepsilon  \nu^{-1}+ C_0 \varepsilon  \nu^{-1}\right) C_0 \varepsilon  \nu^{-1} \nu^{-\frac{1}{2}}     \nonumber \\
\lesssim & \left(C_1 \varepsilon  \nu^{-1} \right)^2 \left( \frac{C_0^2}{C_1} \varepsilon  \nu^{-2}+ C_0 \varepsilon  \nu^{-2}   \right)   \nonumber \\
\lesssim &  \left(C_1 \varepsilon  \nu^{-1} \right)^2.
\end{align}
By \eqref{4..5},  \eqref{4..17}, \eqref{4..23}--\eqref{4..24}, \eqref{5.1} and Corollary \ref{cor4.1}, it holds
\begin{align}\label{7.7}
\mathcal{NLP}_{\neq} \leqslant &  -\int_{1}^{T} \int  {\left\langle D \right\rangle}^{N-1}  u_0^2  {\left\langle D \right\rangle}^{N-1} \partial_{y} (-\Delta)^{-1}\partial_{i} \left(  u_{\neq}^{j} \partial_{j} u_{\neq}^{i}\right)_0 dVdt \cdot 1_{i, j \neq 1} \nonumber \\
\leqslant &  \left\| U_0^2 \right\|_{L^{\infty} H^{N-1}} \left\| U_{\neq}^{2, 3} \right\|_{L^2 H^{N-1}} \left\| \nabla_{L} U_{\neq}^{2, 3} \right\|_{L^2 H^{N-1}}   \nonumber \\
\leqslant &  \left\| U_0^2 \right\|_{L^{\infty} H^{N-1}} \left\| U_{\neq}^{2, 3} \right\|_{L^2 H^{N-1}} \left( \left\| M \check{K}_{\neq}^{2} \right\|_{L^2 H^{N-1}}+\left\| \nabla_{L} U_{\neq}^{3} \right\|_{L^2 H^{N-1}} \right)   \nonumber \\
\lesssim & C_1 \varepsilon  \nu^{-1} \left( \varepsilon \nu^{-\frac{1}{6}}+ C_0 \varepsilon \nu^{-\frac{1}{3}}  \nu^{-\frac{1}{6}} \right) \left( \varepsilon \nu^{-\frac{1}{6}}+ C_0 \varepsilon \nu^{-\frac{1}{3}}  \nu^{-\frac{1}{2}} \right)        \nonumber \\
\lesssim & \left(C_1 \varepsilon  \nu^{-1} \right)^2 \left( \frac{1}{C_1} \varepsilon+ \frac{C_0}{C_1} \varepsilon+\frac{C_0^2}{C_1} \varepsilon  \nu^{-\frac{1}{3}}  \right)   \nonumber \\
\lesssim &  \left(C_1 \varepsilon  \nu^{-1} \right)^2.
\end{align}
Combining \eqref{7.4}--\eqref{7.7} and \eqref{4..196}, we can close the a priori assumption \eqref{4..12}.

\subsection{Energy estimates on $u_0^3$}\label{4.5.3}
Similar to the previous subsection, the $H^{N-1}$ energy estimate for $u_0^3$ is given
\begin{align}\label{4..217}
\frac{1}{2} \left\|  u_0^3 (T)  \right\|_{H^{N-1}}^2+\nu \left\|   \nabla u_0^3 \right\|_{L^2 H^{N-1}}^2 =& \frac{1}{2} \left\|  u_{0}^3(1)  \right\|_{H^{N-1}}^2- \int_{1}^{T} \int   {\left\langle D \right\rangle}^{N-1} \Big[ \left(  \partial_{z} (-\Delta)^{-1} \partial_{y} u_0^{1} \right) \nonumber \\
&  + \left(u \cdot \nabla u^3 \right)_0 +\partial_{z} (-\Delta)^{-1} \left(\partial_{i}  u^{j} \partial_{j} u^{i}\right)_0 \Big] {\left\langle D \right\rangle}^{N-1}  u_0^3 dVdt  \nonumber \\
\overset{\Delta}{=}& \frac{1}{2} \left\|u_{0}^3(1)  \right\|_{H^{N-1}}^2+ \mathcal{LU}3+\mathcal{T}+\mathcal{NLP},
\end{align}
which  the right-hand side will be estimated term by term as follows. First, from incompressible condition, it follows   $\partial_{y} u_0^2+ \partial_{z} u_0^3=0$. Thus, by  \eqref{4..11}--\eqref{4..12} and integration by parts, it holds
\begin{align}
\mathcal{LU}3=& \int_{1}^{T} \int  {\left\langle D \right\rangle}^{N-1}  \partial_{z} u_0^3  {\left\langle D \right\rangle}^{N-1} \left(   (-\Delta)^{-1} \partial_{y} u_0^{1} \right) dVdt \nonumber \\
=&-\int_{0}^{T} \int  {\left\langle D \right\rangle}^{N-1}  \partial_{y} u_0^2  {\left\langle D \right\rangle}^{N-1} \left(   (-\Delta)^{-1} \partial_{y} u_0^{1} \right) dVdt \nonumber \\
\leqslant &  \left\| \nabla u_0^1  \right\|_{L^2 H^{N-1}} \left\| \nabla u_0^2  \right\|_{L^2 H^{N-1}} \nonumber \\
\lesssim &  \varepsilon \nu^{-\frac{1}{2}} C_1 \varepsilon \nu^{-1}  \nu^{-\frac{1}{2}} \nonumber \\
\lesssim & \left(C_0 \varepsilon  \nu^{-1} \right)^2 \left( \frac{C_1}{C_0^2}     \right) \nonumber \\
\lesssim & \left(C_0 \varepsilon  \nu^{-1} \right)^2,
\end{align}
which suffices for $C_0 \gg C_1 >1$. The term $\mathcal{T}$ can be treated as for $u_0^2$ in the previous subsection
\begin{align}
\mathcal{T}&=- \int_{1}^{T} \int  {\left\langle D \right\rangle}^{N-1}  u_0^3  {\left\langle D \right\rangle}^{N-1} \Big[  u_0^2 \partial_{y} u_0^3 + u_0^3 \partial_{z} u_0^3 + \left(u_{\neq} \cdot \nabla u_{\neq}^3 \right)_0 \Big]dVdt  \nonumber \\
& \leqslant \left\| u_0^3 \right\|_{L^{\infty} H^{N-1}} \left\| \nabla u_0^3 \right\|_{L^2 H^{N-1}}^2+\left\| u_0^3 \right\|_{L^{\infty} H^{N-1}} \left\| \nabla u_0^2 \right\|_{L^2 H^{N-1}} \left\| \nabla u_0^3 \right\|_{L^2 H^{N-1}} \nonumber \\
&\quad+\left\| u_0^3 \right\|_{L^{\infty} H^{N-1}} \left\| \left( U_{\neq}^2, U_{\neq}^3  \right) \right\|_{L^2 H^{N-1}} \left\| \nabla_{L} U_{\neq}^3 \right\|_{L^2 H^{N-1}} \nonumber \\
&\lesssim  C_0 \varepsilon  \nu^{-1} \left( C_0 \varepsilon  \nu^{-1} \nu^{-\frac{1}{2}} \right)^2+ C_0 \varepsilon  \nu^{-1} C_1 \varepsilon  \nu^{-1} \nu^{-\frac{1}{2}} C_0 \varepsilon  \nu^{-1} \nu^{-\frac{1}{2}}  \nonumber \\
&\quad+ C_0 \varepsilon  \nu^{-1} \left( \varepsilon \nu^{-\frac{1}{6}}+ C_0 \varepsilon \nu^{-\frac{1}{3}}  \nu^{-\frac{1}{6}} \right) C_0 \varepsilon \nu^{-\frac{1}{3}}  \nu^{-\frac{1}{2}}  \nonumber \\
& \lesssim \left(C_0 \varepsilon  \nu^{-1} \right)^2 \left( C_0 \varepsilon  \nu^{-2}+ C_1 \varepsilon  \nu^{-2}+ \varepsilon + C_0 \varepsilon  \nu^{-\frac{1}{3}}       \right)   \nonumber \\
& \lesssim  \left(C_0 \varepsilon  \nu^{-1} \right)^2,
\end{align}
which suffices for $C_0 \varepsilon  \nu^{-2}$ sufficiently small. For $\mathcal{NLP}$, we have
\begin{align*}
\mathcal{NLP}&= -\int_{1}^{T} \int  {\left\langle D \right\rangle}^{N-1}  u_0^3  {\left\langle D \right\rangle}^{N-1} \partial_{z} (-\Delta)^{-1}\partial_{i} \Big[ u_0^{j} \partial_{j} u_0^{i} +\left(  u_{\neq}^{j} \partial_{j} u_{\neq}^{i}\right)_0 \Big]dVdt  \nonumber \\
& \overset{\Delta}{=} \mathcal{NLP}_0+\mathcal{NLP}_{\neq}.
\end{align*}
From frequency decomposition, \eqref{4..12}--\eqref{4..13} and the inequality $\left\| u_0^2 \right\|_{L^2 H^{N-1}} \leqslant \left\| \partial_{z} u_0^2 \right\|_{L^2 H^{N-1}}$, one obtain 
\begin{align}
\mathcal{NLP}_0 \leqslant & \left\| u_0^3 \right\|_{L^{\infty} H^{N-1}} \left\| u_0^{2} \right\|_{L^2 H^{N-1}} \left\| \nabla u_0^{2, 3} \right\|_{L^2 H^{N-1}}+\left\| u_0^3 \right\|_{L^{\infty} H^{N-1}} \left\| \nabla u_0^{3} \right\|_{L^2 H^{N-1}}^2 \nonumber \\
\lesssim &  C_0 \varepsilon  \nu^{-1} C_1 \varepsilon  \nu^{-1} \nu^{-\frac{1}{2}} C_0 \varepsilon  \nu^{-1} \nu^{-\frac{1}{2}} +C_0 \varepsilon  \nu^{-1} \left( C_0 \varepsilon  \nu^{-1} \nu^{-\frac{1}{2}}  \right)^2   \nonumber \\
\lesssim & \left(C_0 \varepsilon  \nu^{-1} \right)^2 \left( C_1 \varepsilon  \nu^{-2}+ C_0 \varepsilon  \nu^{-2}   \right)   \nonumber \\
\lesssim &  \left(C_0 \varepsilon  \nu^{-1} \right)^2.
\end{align}
For $\mathcal{NLP}_{\neq}$, applying \eqref{4..13}, \eqref{4..23}--\eqref{4..24} and Corollary \ref{cor4.1} yields 
\begin{align*}
\mathcal{NLP}_{\neq} \leqslant &  \int_{1}^{T} \int  {\left\langle D \right\rangle}^{N-1}  u_0^3  {\left\langle D \right\rangle}^{N-1} \partial_{z} (-\Delta)^{-1}\partial_{i} \left(  u_{\neq}^{j} \partial_{j} u_{\neq}^{i}\right)_0 dVdt \cdot 1_{i, j \neq 1} \nonumber \\
\leqslant &  \left\| U_0^3 \right\|_{L^{\infty} H^{N-1}} \left\| U_{\neq}^{2, 3} \right\|_{L^2 H^{N-1}} \left\| \nabla_{L} U_{\neq}^{2, 3} \right\|_{L^2 H^{N-1}}   \nonumber \\
\lesssim & C_0 \varepsilon  \nu^{-1} \left( \varepsilon \nu^{-\frac{1}{6}}+ C_0 \varepsilon  \nu^{-\frac{1}{3}} \nu^{-\frac{1}{6}} \right) \left( \varepsilon \nu^{-\frac{1}{2}}+ C_0 \varepsilon \nu^{-\frac{1}{3}}  \nu^{-\frac{1}{2}} \right)        \nonumber \\
\lesssim & \left(C_0 \varepsilon  \nu^{-1} \right)^2 \left( C_0 \varepsilon  \nu^{-\frac{1}{3}}  \right)   \nonumber \\
\lesssim &  \left(C_0 \varepsilon  \nu^{-1} \right)^2.
\end{align*}
 
Until now, the proof of Proposition \ref{pro4.1} is completed. At the same time, Propositions \ref{pro4.1}--\ref{pro4.3} imply Theorem \ref{1.2.}.
\vskip 4mm

\vspace{1mm} \noindent\textbf{Data availability statement.}
\vskip2mm

No new data were created or analysed in this study.

\vspace{4mm} \noindent\textbf{Acknowledgements.}
%The authors would like to thank the anonymous referees for invaluable 
 
Xu was partially supported by the National Key R\&D Program of China (grant 2020YFA0712900) and the National Natural Science Foundation of China (grants 12171040, 11771045, and 12071069).

\addcontentsline{toc}{section}{References}
%??????

\end{CJK*}

\begin{thebibliography}{99}
	
\bibitem{MR3612004}
J. Bedrossian, P. Germain, N. Masmoudi, On the stability threshold for the 3D Couette flow in
Sobolev regularity. Ann. of Math.  185(2): 541--608 (2017).
\bibitem{MR4458538}
J. Bedrossian, P. Germain, N. Masmoudi, Dynamics near the subcritical transition of the 3D Couette flow II: Above threshold case. Mem. Amer. Math. Soc. 279, (2022)   v+135.
\bibitem{MR4126259}
J. Bedrossian, P. Germain, N. Masmoudi, Dynamics near the subcritical transition of the 3D Couette flow I: Below threshold case. Mem. Amer. Math. Soc. 266 (2020) v+158.
\bibitem{MR3974608}
J. Bedrossian, P. Germain, N. Masmoudi, Stability of the Couette flow at high Reynolds numbers in two dimensions and three dimensions. Bull. Amer. Math. Soc.  56(3):  373--414 (2019).


\bibitem{BHIW1-2024}
J. Bedrossian, S. He, S. Iyer, F. Wang, Uniform Inviscid Damping and Inviscid Limit of the 2D Navier-Stokes equation with Navier Boundary Conditions. arXiv:2405.19249  (2024).

\bibitem{BHIW2-2024}
J. Bedrossian, S. He, S. Iyer, F. Wang, Stability threshold of nearly-Couette shear flows with Navier boundary conditions in 2D. arXiv:2311.00141  (2024).
\bibitem{MR3448924}
J. Bedrossian, N. Masmoudi, V. Vicol, Enhanced dissipation and inviscid damping in the inviscid limit of the Navier-Stokes equations near the two dimensional Couette flow. Arch. Ration. Mech. Anal. 219(3): 1087--1159  (2016).
\bibitem{MR3867637}
J. Bedrossian, V. Vicol, F. Wang, The Sobolev stability threshold for 2D shear flows near Couette. J. Nonlinear Sci. 28(6): 2051--2075  (2018).


\bibitem{BCD2011}
H. Bahouri, J.-Y. Chemin, and R. Danchin. Fourier analysis and nonlinear partial differential equations, volume 343 of \textit{Grundlehren der Mathematischen Wissenschaften [Fundamental Principles of Mathematical Sciences].} Springer, Heidelberg, 2011.


\bibitem{MR1886008}
S. J. Chapman, Subcritical transition in channel flows. J. Fluid Mech. 451: 35--97 (2002).

\bibitem{MR4121130}
Q. Chen, T. Li, D. Wei, Z. Zhang, Transition threshold for the 2-D Couette flow in a finite channel. Arch. Ration. Mech. Anal. 238(1): 125--183 (2020).

\bibitem{CWZ2023}
Q. Chen, D. Wei, Z. Zhang, Linear inviscid damping and enhanced dissipation for monotone shear flows. Comm. Math. Phys. 400(1): 215-276 (2023).

\bibitem{CWZ2024}
Q. Chen, D. Wei, Z. Zhang, Transition threshold for the 3D Couette flow in a finite channel. Mem. Amer. Math. Soc. 296(1478), v+178 pp (2024).

\bibitem{MR4322283}
W. Deng, J. Wu, P. Zhang, Stability of Couette flow for 2D Boussinesq system with vertical dissipation. J. Funct. Anal. 281(12): 1-40 (2021).
\bibitem{Drazin2002}
P. Drazin, Introduction to hydrodynamic stability. Cambridge Texts in Applied Mathematics. Cambridge University Press, Cambridge, 2002.

\bibitem{MR2736445}
Y. Duguet, L. Brandt, B. R. J. Larsson, Towards minimal perturbations in transitional plane Couette flow. Phys. Rev. E  82(3): 1-13 (2010).

\bibitem{Ellingsen1975}
T. Ellingsen, E. Palm, Stability of linear flow. Phys. Fluids 18: 487--488 (1975).

\bibitem{HSX2024}
W. Huang, Y. Sun, X. Xu, On the Sobolev stability threshold for 3D Navier-Stokes equations with rotation near the Couette flow. arXiv:2409.05104 (2024).


\bibitem{LO97}
C. D. Levermore, M. Oliver, Analyticity of solutions for a generalized Euler equation. J. Differential Equations. 133(2): 321--339 (1997).


\bibitem{Liss2020}
K. Liss, On the Sobolev stability threshold of 3D Couette flow in a uniform magnetic field. Comm. Math. Phys. 377(2):  859--908 (2020).

\bibitem{Lundbladh1994}
A. Lundbladh, D. S. Henningson, S. C. Reddy, Threshold amplitudes for transition in channel flows. Transition, Turbulence and Combustion: Volume I Transition.  309--318  (1994).

\bibitem{MR4176913}
N. Masmoudi, W. Zhao, Enhanced dissipation for the 2D Couette flow in critical space. Comm. Partial Differential Equations. 45(12): 1682--1701(2020).

\bibitem{MR4451473}
N. Masmoudi, S. H. Belkacem, W. Zhao, Stability of the Couette flow for a 2D Boussinesq system without thermal diffusivity. Arch. Ration. Mech. Anal. 245(2):  645--752 (2022).

\bibitem{MB02}
A. J. Majda, A. L. Bertozzi, Vorticity and incompressible flow. Cambridge Texts in Applied Mathematics. Cambridge University Press, Cambridge (2002).

\bibitem{OK2024}
W. Oxley, R. Kerswell, Linear stability of stratified, rotating, viscous plane Couette-Poiseuille flow. J. Fluid Mech. 991: Paper No. A16, 33 pp, (2024).

\bibitem{MR1631950}
S. C. Reddy, P. J. Schmid, J. S. Baggett, D. S. Henningson, On stability of streamwise streaks and transition thresholds
in plane channel flows. J. Fluid Mech. 365: 269--303  (1998).

\bibitem{TA1996}
N. Tillmark, P. Alfredsson. Experiments on rotating plane Couette flow. Advances in Turbulence VI, volume 36 of Fluid Mechanics and its Applications, 391--394, (1996).


\bibitem{TTA2010}
T. Tsukahara, N. Tillmark, P. Alfredsson, Flow regimes in a plane Couette flow with system rotation. J. Fluid Mech. 648(3):5--33, (2010).

 
\bibitem{MR3185102}
A. M. Yaglom, Hydrodynamic instability and transition to turbulence. Fluid Mechanics and its Applications. Springer, Dordrecht (2012).

\bibitem{MR4373161}
D. Wei, Z. Zhang, Transition threshold for the 3D Couette flow in Sobolev space. Comm. Pure Appl. Math. 74(11): 2398--2479  (2021).

\bibitem{WZ2023TJM}
D. Wei, Z. Zhang, Nonlinear enhanced dissipation and inviscid damping for the 2D Couette flow. Tunis. J. Math. 5(3): 573-592 (2023).

\bibitem{Zelati2023}
M. C. Zelati, A. D. Zotto, Suppression of lift-up effect in the 3D Boussinesq equations around a stably stratified Couette flow. Quarterly of Applied Mathematics, (2024).

\end{thebibliography}
\end{document}